\documentclass[12pt,a4paper]{article}
\usepackage[utf8]{inputenc}
\usepackage[T1]{fontenc}
\usepackage{amsmath}
\usepackage{amsfonts}
\usepackage{amssymb}
\usepackage{makeidx}
\usepackage{graphicx}
\usepackage[T1]{fontenc}
\usepackage{csquotes}
\usepackage{hyperref}
\usepackage[french,english]{babel}
\usepackage{amsfonts,amsmath,amssymb}
\usepackage{amsmath,amscd,amsxtra,latexsym,mathrsfs}
\usepackage{amsthm}

\usepackage{cancel}
\usepackage{ulem}

\usepackage{bbold}
\usepackage{cases}
\usepackage{stackengine}
\usepackage{scalerel}
\usepackage{enumitem}
\usepackage{fullpage} 
\usepackage{graphicx}

\numberwithin{equation}{section}

\newtheorem{theorem}{Theorem}[section]    
\newtheorem{proposition}[theorem]{Proposition}    
\newtheorem{lemma}[theorem]{Lemma}   
\newtheorem{remark}{\textbf{{Remark}}}

\newcommand{\pr}{\mathbb{P}}
\newcommand{\esp}{\mathbb{E}}

\newcommand{\eps}{\epsilon}
\newcommand{\A}{\mathcal{A}}

\newcommand{\bone}{\mathbf}

\def\bone{\mathbb 1}

\makeatletter
\def\oversortoftilde#1{\mathop{\vbox{\m@th\ialign{##\crcr\noalign{\kern3\p@}%
				\sortoftildefill\crcr\noalign{\kern3\p@\nointerlineskip}%
				$\hfil\displaystyle{#1}\hfil$\crcr}}}\limits}

\def\sortoftildefill{$\m@th \setbox\z@\hbox{$\braceld$}%
	\braceld\leaders\vrule \@height\ht\z@ \@depth\z@\hfill\braceru$}
\makeatother

\usepackage{authblk}

\newcommand{\R}{\mathbb R}
\newcommand{\N}{\mathbb N}
\newcommand{\Z}{\mathbb Z}
\newcommand{\CC}{\mathbb C}
\newcommand{\bbS}{\mathbb S}
\newcommand{\C}{\mathcal C}
\newcommand{\F}{\mathcal F}

\def\cC{\mathcal C}
\def\cG{\mathcal G}

\newcommand{\E}{\mathbb E}
\newcommand{\ve}{\varepsilon}

\renewcommand{\P}{\mathbb P}

\newcommand{\SA}{{\mathscr A}}

\newcommand{\SK}{{\mathscr K}}

\newcommand{\SN}{{\mathscr N}}

\newcommand{\SW}{{\mathscr W}}

\def\cD{\mathcal{D}}
\def\cC{\mathcal{C}}
\def\PP{\mathbb{P}}
\def\Se{\mathbb{S}}

\newcommand{\Hm}[1]{\leavevmode{\marginpar{\tiny%
$\hbox to 0mm{\hspace*{-0.5mm}$\leftarrow$\hss}%
\vcenter{\vrule depth 0.1mm height 0.1mm width \the\marginparwidth}%
\hbox to 0mm{\hss$\rightarrow$\hspace*{-0.5mm}}$\\\relax\raggedright
#1}}}

\usepackage{xcolor}

\title{Large time behaviour {for} the semigroup of the kinetic Brownian motion in the plane}

\author[1]{Marc Arnaudon}
\author[2]{Magalie Bénéfice}
\author[1]{Michel Bonnefont}
\author[1]{Delphine Féral}
\affil[1]{{Univ. Bordeaux, CNRS, Bordeaux INP, IMB, UMR 5251}, {Talence},
           {F-33400}, 
            {France}}
\affil[2]{{Université de Lorraine, CNRS, IECL}, {F-54000}, {Nancy}, {France}}

\date{\today}

\begin{document}

\maketitle

\begin{abstract}
{ 
We establish an integration by parts formula for the semigroup in time $T>0$ of the kinetic Brownian motion in the Euclidean plane together with its velocity in the circle. The stochastic differential equation of our kinetic Brownian motion is driven here by one real-valued Brownian motion constructed from
an orthonormal basis of $L^2([0,T],\R)$ and an independent sequence of $\SN(0,1)$ random variables. Our method is based on an explicit computation of a Malliavin dual in the Gaussian space. We are mainly interested in large time $T$. From our integration by parts, we obtain   gradient estimates including a reverse Poincaré inequality for the semigroup. As a direct consequence, we also obtain a Liouville property for the generator of the kinetic Brownian motion and its speed: all bounded harmonic functions are constant.}    
\end{abstract}

\tableofcontents

\section{Introduction}
\label{S0}

\subsection{The context of our study}
\label{Sub01}

In this paper, we are interested in regularisation properties for some hypoelliptic diffusion{s}. We shall focus on the simple model of the kinetic Brownian motion on the plane $\R^2$ and more specifically on its long-time behaviour. This kinetic Brownian motion is  the process whose velocity  is given by a Brownian motion  on the circle $\bbS$. We are interested in the full process given by the velocity and the position. It is thus a process on $\bbS \times \R^2$ but it can also be lifted {to} a process on $\R \times \R^2$.

One interest of the kinetic Brownian motion is that, with a suitable $1$-parameter family of renormalisations, it provides an interpolation  between the geodesic flow and the Brownian motion on the manifold;  see e.g. Angst, Bailleul and Tardif \cite{angst-bailleul-tardif:2015}.

Moreover, for our concern,  it is non linear and  quite  degenerate since there is only a 1-dimensional  noise acting on the velocity and thus  it is necessary to  use brackets of step  {three} with the drift to obtain the  
 Hörmander condition. 

 A powerful tool to obtain the desired regularisation properties is  the Bismut-Elworthy-Li  derivative formula for the semigroup.
 Our goal is thus to obtain such a formula and to investigate its long-time behaviour.
It is well known that 
this formula may be obtained through the Malliavin  calculus and the Malliavin covariance matrix. The general idea consists in solving an {infinitesimal control problem to transfer the tangent flow into a Malliavin derivative and then perform} the integration by parts formula using the Malliavin dual.
In the elliptic setting, it  can be analysed {via} 
the Bakry-\'Emery curvature tensor.
But in the degenerate case, since the  notion of curvature is a delicate question, the general  existing formulae established using this approach   are 
less explicit and difficult to analyse, see e.g. Arnaudon and Thalmaier~\cite{Arnaudon-Thalmaier:10} Theorem~3.2 and Theorem~7.1.
In order to obtain  explicit results, such a method has been analysed case by case, see for example Wang and Zhang~\cite{Wang-Zhang:2013} for    some classes of hypoelliptic processes admitting  linear parts into the drift  and  Wang~\cite{Wang-Grushin} for  Grushin operators. Malliavin calculus has also been used by Kusuoka~\cite{Kusuoka} to deal with some larger classes than hypoelliptic diffusions for small times. We also refer to Üstünel for Malliavin calculus for martingale representation of degenerate diffusions~\cite{ustunel:19} with application to some Log-Sobolev inequalities~\cite{ustunel:arxiv}.

A closely related approach is to use a coupling and  Girsanov theorem. The coupling is  again obtained  through the resolution of an associated control problem. {In this sense, the Malliavin approach may be seen as an infinitesimal version of the coupling approach; see \eqref{E24} and \eqref{eq:T+D=0} below for more precision. }
This coupling method has been implemented to deal with the kinetic Fokker-Planck diffusion in Guillin and Wang~\cite{guillin-wang} and with the sub-elliptic Brownian motion on the free step $2$ Carnot groups by the authors in a previous work~\cite{ABBF:25}. The control problem is somehow difficult since one has to couple both the position and the velocity (or the  planar Brownian motions and their Lévy areas) at the same time.

Let us briefly recall some {related methods and} results about {successful and efficient} couplings in hypoelliptic and sub-elliptic situations. {In these cases, regularisation properties are obtained by studying the tail distribution of the first meeting times of two processes starting from different points.}
It started with  
Ben Arous, Cranston and Kendall~\cite{CranstonKolmogorov} and Kendall~\cite{kendall2007coupling,kendall-coupling-gnl} who constructed successful co-adapted couplings. A 
main progress was then made by Kendall and Banerjee~\cite{BanerjeeKolmogorov} and then by Banerjee, Gordina and Mariano in~\cite{banerjee-gordina-mariano:2017} who {used the Karhunen-Loève expansion of the Brownian bridge to construct} efficient
 (finite look-ahead) non-co-adapted successful couplings for Kolmogorov diffusions and Brownian motions on the Heisenberg group; see also Bénéfice \cite{Nonco-adaptedSU(2),CarnotSuccessful}
 and Luo and Neel~\cite{luo2024nonmarkovian} for generalisations.
 
Note that, in~\cite{ABBF:25},  in addition to the integration by parts formula, the authors  perform an efficient non-co-adapted coupling working only  at a given time $T$.  An important point in the construction is the expansion of the driving Brownian motions in a well chosen orthogonal basis of $L^2([0,T])$. Similar ideas also appear in the present work to compute the Malliavin dual.

A difficulty to deal with the kinetic Brownian motion comes from the fact that the position is obtained through a non linear integral of the noise.
Up to our knowledge, long-time behaviour of the Euclidean kinetic Brownian motion has not been investigated, contrarily to the small time, for which Perruchaud~\cite{perruchaud2023} gives  asymptotics of the heat kernel for our precise model.
Small time asymptotics are also present in the related works of Menozzi~\cite{MR4554678} (where step $2$ kinetic processes are considered). Concerning, the long-time behaviour of the kinetic Brownian motion, Baudoin and Tardif \cite{baudoin-tardif:2018} obtained an exponential convergence under the assumption of a Poincaré inequality for the underlying manifold (see also the improvements on the exponential rate  by Kolb,  Weich, Wolf~\cite{KWW} and  Ren, Tao \cite{RTSpectral23}). In \cite{MR4059002}, Baudoin, Gordina and Mariano used a coupling method for obtaining other gradient inequalities for general Kolmogorov diffusions including kinetic Brownian motion. 

After renormalisation in time and space of the kinetic Brownian motion, our model enters in the field of homogenisation.  Much more complex models have been investigated in Li~\cite{LiXM:08, LiXM:18, LiXM:18bis}, Gehringer and Li~\cite{Gehringer-LiXM:21}, Hairer and Li~\cite{Hairer-LiXM:20}, Li and Sieber~\cite{LiXM-Sieber:22}, leading to various convergence results to limit processes, including speed of convergence. In our context, convergence results are obtained in Section~\ref{S3}. Notice the important fact that, {when doing the rescaling}, the vector field governing the so-called fast motion has a vanishing integral in the circle.  As explained in \cite{LiXM:08}, this induces a second order scaling involving a martingale term in the limit. In this situation, the rate of convergence for the models in \cite{LiXM:08} is an open question. Our goal in this paper is different, as we aim to get estimates of the gradient of the semigroup at each fixed time for the non rescaled process. Also, the state space of the Euclidean kinetic Brownian motion is not compact and is not negatively curved. This again places our model in a critical situation. 

Our goal, in this paper, is thus to obtain estimates of the gradient of the semigroup via a Bismut-Elworthy-Li type formula  at a given and fixed non small time $T$.
We will work through the usual Malliavin calculus and the Malliavin covariance approach.   
We will take advantage of a decomposition of the standard Brownian motion 
on an orthonormal basis  to perform a Gaussian integration by parts and compute the dual of the Malliavin derivative.
The main point then is to establish the long-time behaviour of the (reduced) Malliavin matrix and of the dual.
 This will be obtained through the study of homogenisation and the convergence in law of stochastic oscillating integrals. A crucial point being that, if by the standard Hörmander theorem
  the (reduced) Malliavin matrix is invertible and its inverse admits moments of any order, a uniform bound in $T$ of these moments is needed here.
This uniform bound  is a difficult technical point. This will be done through a careful study of oscillating stochastic integrals and proving,  with the help of the recent work~\cite{Bobkov:23} by Bobkov,  that they have sub-Gaussian densities.

\subsection{Organisation and main results}
\label{Sub02}

In Section~\ref{S1gen} we present the model $(X_t^x)_{t\geq 0}$ for the kinetic Brownian motion   on $\bbS \times \CC$ and its lift on $\R\times\CC$ as well as the main strategy for obtaining integration by parts formula, which will lead to estimates of the derivative of the semigroup at time $T>0$.
 This integration by parts formula is given in Equation~\eqref{E23} of Proposition~\ref{P3} using a Malliavin dual $\delta h$:
\begin{equation*}
    (d_xP_Tf, v)=-\E\left[f(X_T^x)\delta h\right].
\end{equation*}

As our construction of the driving Brownian motion is made via an orthonormal basis in the path space and a sequence of Gaussian random variables, a general formula for Malliavin duals is given in Proposition~\ref{P4} with the explicit Formula~\eqref{E46}, extending to infinite dimension well-known integration by parts formulae on Gaussian spaces. Our first main result is  Theorem~\ref{P4bis} where  both the explicit computation of the Malliavin dual  and  an  explicit Bismut-Li-Elworthy type  integration by parts formula   are established.

In particular, the  dual is expressed in terms of (the inverse of) the Malliavin matrix of the kinetic Brownian motion 
and in terms of oscillating integrals of the form 
\begin{equation*}
\int_0^T g(s) e^{i  B_s} ds \,,\, s\in[0,T]
\end{equation*}
where $(B_t)_{0 \leq t\le T}$ is a standard  Brownian motion on $\R$ and where $g$ is a
(possibly random) $\mathcal C^1$ function.

Section~\ref{S3} is devoted to convergences of stochastic oscillating integrals.
A strong law of large numbers is given in Theorem~\ref{T:LFGN}. Then in Theorem~\ref{T2} and Theorem~\ref{T1}, convergences in law for the driving Brownian motion, the renormalised kinetic Brownian motion as well as for several multiple integrals are stated. In particular this immediately leads to a convergence in law of the renormalised Malliavin dual. 

However, for  {studying the integration by parts formulae in large time}, an essential tool is a uniform control in time of all moments of the inverse of the appropriately renormalised reduced Malliavin matrix. This result is stated in Proposition~\ref{P2}. It turns out that, for obtaining this result, oscillating integrals have to be appropriately sliced, each slice having a sub-Gaussian density, and the sum also having sub-Gaussian density thanks to recent results in~\cite{Bobkov:23}. All of this is explained in Lemma~\ref{L3},~\ref{L1},~\ref{LY},~\ref{L2}. 

In Proposition~\ref{P5}, these results are applied to obtain asymptotic estimates of the moments of the Malliavin dual for $T$ large enough:
\begin{equation*}
    \esp[|\delta  h|^q]^{\frac{1}{q}}\le C_p\left(|v_\R|+\frac1{\sqrt T}|v_\CC|\right),\qquad v=\begin{pmatrix}
v_\R\\v_\CC
\end{pmatrix}\in \R\times \CC.
\end{equation*}

This proposition provides an order $T^{-1/2}$ for the vertical gradient of the semigroup, but only constant order for the horizontal gradient.

Section~\ref{S4} contains the main results of the paper, improving in several directions the estimate of the horizontal gradient. Theorem~\ref{T2bis} yields an estimate of order $T^{-1/4}$ for the horizontal gradient of $P_Tf$  and again $T^{-1/2}$ for the vertical gradient, with a control involving $(P_T|f|^p)^{1/p}$, for any $p\in(1,\infty)$ 
 and $T$ large enough:
\begin{equation*}
    \left|\left(d_xP_Tf,v \right)\right|\le C_p\left(P_T|f|^p(x)\right)^{1/p}\left(\frac1{T^{1/4}}|v_\R|+\frac1{\sqrt T}|v_\CC|\right).
\end{equation*}

The method incorporates an adapted Bismut-Elworthy-Li formula to make a transfer from horizontal to vertical gradient. Theorem~\ref{T3} states a {$   T^{-1/2}$ or }
$\ln(T) T^{-1/2}$ rate for the whole gradient of $P_Tf$, with a control in terms of $\|f\|_\infty$
 for $T$ large enough:
\begin{equation*}
    \left|\left(d_xP_Tf,v \right)\right|\le C\|f\|_\infty\frac{\|v\|}{\sqrt T}
\end{equation*}
for the semigroup on $\bbS\times \CC$ and 
\begin{equation*}
    \left|\left(d_xP_Tf,v \right)\right|\le C\|f\|_\infty\left(\frac{\ln(T)}{\sqrt T}|v_\R|+\frac1{\sqrt T}|v_\CC|\right)
\end{equation*}
for the semigroup on $\R\times \CC$.

For this result, reflection coupling for the driving Brownian motion is performed before applying the integration by parts formulae. Finally, Theorem~\ref{T4} provides, as a consequence a Liouville property for the semigroup: all bounded harmonic functions are constant.

\section{Presentation of the model and Malliavin calculus}\label{S1gen}
\subsection{The model on  { $\bbS \times \CC$ and its lift  on $\R\times \CC$} }
\label{S1}

This article is  devoted to the study of the kinetic Brownian motion in the plane. 
The true model is to consider a stochastic process $(\tilde U_t,Z_t)_{t\geq 0}$ which lives in  $\bbS \times \R^2$ where $\tilde U_t$ is  a Brownian motion on the circle and $Z_t$ its integral; that is satisfying:
\[
dZ_t= \tilde U_t dt.
\]
Here, we will often identify $\R^2$ with $\CC$ in the natural way. And it is also  natural to view $\bbS$ as the quotient $^{\textstyle \R}\big/_{ \textstyle 2\pi\Z}$; this identification being made by the bijection: $\theta \in\  ^{\textstyle \R}\big/_{ \textstyle 2\pi\Z} \to e^{i\theta} \in \bbS$.
Moreover, the Brownian motion on the circle $\bbS$ is also given by the complex exponential of a Brownian motion on $\R$. 
With these identifications, the kinetic Brownian motion on the plane is simply the process $(U_t,Z_t)_{t\geq 0}$ in $^{\textstyle \R}\big/_{ \textstyle 2\pi\Z}  \times \CC$ described by the stochastic differential equation: 
\begin{equation}\label{E28}
\left\{ \begin{array}{ccl}
dU_t &=& dB_t\\
dZ_t&=& e^{i U_t} dt,
\end{array}
\right.
\end{equation}
 $(B_t)$ being a real-valued Brownian motion started at $0$.

It is immediate to see that the kinetic Brownian motion can be lifted to a stochastic process
on $\R\times \CC$ sharing the same stochastic differential equation \eqref{E28}.
Since we will often work only infinitesimally, most of our arguments indifferently hold for the two models on $\bbS\times\CC$ and
$\R\times \CC$. For simplicity, unless otherwise specifically said, we choose  to work  on the lifted  model on $\R\times\CC$ and we denote $(U_t,Z_t)_{t\geq 0}$ the solution of \eqref{E28} on $\R\times\CC$.

For later convenience, the solution $\displaystyle\binom{U_t}{Z_t}$ will be represented by a $3$-dimensional column vector and $Z_t\in \CC$ will often be identified with $\displaystyle \binom{{\rm Re}(Z_t)}{{\rm Im}(Z_t)}$.

The solution starting in $x=\begin{pmatrix}u\\z\end{pmatrix}$ is easily given by $X_t^x= \begin{pmatrix}U_t\\Z_t \end{pmatrix}$ with
 \begin{equation}\label{Edef}
\left\{ \begin{array}{ccl}
U_t &=& u  + B_t\\
Z_t&=& z + \int_0 ^t e^{i U_s} ds =z + e^{iu} \int_0 ^t e^{i B_s} ds.
\end{array}
\right.
\end{equation}

Following the model in \eqref{Edef}, in the sequel we will more generally denote by $X_t$ the functional defined on $\mathbb{R}\times\mathcal{C}([0,+\infty[)$ such that  $X_t(x,(B_s)_{s\geq 0})$ is the kinetic Brownian motion $X_t^x$.
We also denote 
\[
dX_t^x= V_0(X_t^x) dt + V dB_t
\]
where the vector fields $V$ and  $V_0$ are given by:
\[
V(u,z)
=
\begin{pmatrix}
1\\0_\CC
\end{pmatrix}
\textrm{ and }
V_0(u,z)
=
\begin{pmatrix}
0\\e^{iu}
\end{pmatrix}.
\]
{
In particular, the generator $L$ written on $\R\times \R^2$ of this Markov process is given by 
\begin{equation}\label{EL}
L= \frac{1}{2} \Delta_u + \cos (u) \partial_{z_1} + \sin (u) \partial_{z_2} 
\end{equation}
Note that with
\[
[V,V_0]= \begin{pmatrix}
0\\ie^{iu}
\end{pmatrix}=:V_1 , \; [V,V_1]= \begin{pmatrix}
0\\-e^{iu}
\end{pmatrix}=:V_2
\]
the vector fields $\{V,V_1,V_2\}$ generate the tangent space in each point and thus the  step-three  Hörmander condition is satisfied and the model is hypoelliptic.
}

\subsection{The tangent flow}
Here we consider the solution starting from different  points with the same Brownian motion {$(B_t)_{t\geq 0}$} (synchronous coupling) {$(X_t^x):=\left(X_t(x,(B_s)_{0\leq s\leq t})\right)$}. The tangent flow is obtained by differentiating this flow with respect to the initial solution:

\begin{equation*}
TX_t^x v:=\left. \frac{d}{da} \right\vert_{a=0} X_t^{x+av} = \begin{pmatrix}
v_0\\
v_z + i v_0 e^{iu} \int_0 ^t e^{i B_s} ds
\end{pmatrix}= J_t^x v
\end{equation*}
where 
$J_t^x:= \begin{pmatrix}
1 &0\\
 i e^{iu} \int_0 ^t e^{i B_s} ds & 1 
\end{pmatrix}$.

For future use note that 

\[(J_t^x)^{-1}= \begin{pmatrix}
1 &0\\
 -  i e^{iu} \int_0 ^t e^{i B_s} ds & 1 
\end{pmatrix}
\textrm{ and }
(J_t^x)^{-1} V= \begin{pmatrix}
1 \\
 -  i e^{iu} \int_0 ^t e^{i B_s} ds 
\end{pmatrix}
\]
and also
\[ J_T^x (J_t^x)^{-1} V= \begin{pmatrix}
1 \\
 i e^{iu} \int_0 ^T e^{i B_s} ds -  i e^{iu} \int_0 ^t e^{i B_s} ds \\ 
\end{pmatrix}
=\begin{pmatrix}
1 \\
 i e^{iu} \int_t ^T e^{i B_s} ds 
\end{pmatrix}.
\]

Notice that both the tangent flow and its inverse are uniformly bounded on each time interval $[0,T]$.

\subsection {Malliavin derivative and integration by parts formula}
In the sequel, we will  only be  interested in computing the Malliavin derivative $\cD X_T^{x}$ for the value at time $T$ of $X_T^{x}
$  which can be seen as the differential of $X_T(x,\cdot)$.
 
 Set $H:=L^2([0,T],\mathbb{R})$.  For $h\in H$:
\begin{align*}
\cD X_T^{x} (h)&: = \left.\frac{d}{d\ve}\right\vert_{\ve=0} X_T\left( {x},
\left( B_t+ \ve \int_ 0^t  h(s) ds \right)_{0\leq t\leq T} \right)=\begin{pmatrix}
\int_0^T h(t) dt\\
i e^{iu} \int_0^T e^{iB_t} \int_0^t h(s) ds dt 
\end{pmatrix}.
\end{align*}

{
 A simple idea to obtain an integration by parts formula is, 
given  a  curve $\gamma(a)$ with $\gamma(0)=x$ and $\dot \gamma(0)=v$, to find $\mathfrak{h}_a$ such that
\begin{equation}
    \label{E24}
X_T \left( \gamma(a), \left(B_t+  \int_ 0^t  \mathfrak{h}_a(s) ds \right)_{0\leq t\leq T}\right)\equiv X_T \left( x, (B_t)_{0\leq t\leq T}\right).
\end{equation}
Taking derivative in $a=0$, this implies
\begin{equation}\label{eq:T+D=0}
TX_T^x v + \cD X_T^{x} (  h)= J_T^x v + \cD X_T^{x} ( h) =0,
\end{equation}
where $\displaystyle  h:=\left.\frac{\partial}{\partial a}\right\vert_{a=0}\mathfrak{h}_a$. However, the curves in $a$ and Equation~\eqref{E24} serve only as a motivation and will {finally} not be needed, only the infinitesimal version~\eqref{eq:T+D=0} will be used.}

From this simple equation, we already obtain in the following proposition our integration by parts formula. However most of the work will be later to find an explicit form for the Malliavin dual,  prove that the integrability assumption is satisfied, and above all,  find an asymptotic as $T\to\infty$.
\begin{proposition}
\label{P3}
Let $f : \R\times \CC \to \R$ be a  {bounded $\mathcal{C}^1$ function with bounded derivative}.  {Denote  by $P_T$ the semigroup of $X_T$ and $\delta$ a Malliavin duality operator. If $ h= h(x,v,T)$  is in the domain of~$\delta$ and satisfies~\eqref{eq:T+D=0}, then we have}  
\begin{equation}
    \label{E23}
    (d_xP_Tf, v)=-\E\left[f(X_T^x)\delta h\right].
\end{equation}
\end{proposition}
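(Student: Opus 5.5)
The plan is to differentiate under the expectation, use \eqref{eq:T+D=0} to trade the tangent flow for a Malliavin derivative, and then apply the duality defining $\delta$.

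\textbf{Step 1: differentiate $P_Tf$.} Write $P_Tf(x)=\E[f(X_T^x)]$. The map $x\mapsto X_T^x$ is smooth, with derivative $J_T^x$ given by the explicit formula above. On $[0,T]$ we have $|\int_0^T e^{iB_s}ds|\le T$, so $J_T^x$ is bounded deterministically. Since $f$ is $\mathcal C^1$ with bounded derivative, dominated convergence gives
\[
(d_xP_Tf,v)=\E\left[(df(X_T^x),J_T^xv)\right].
\]

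\textbf{Step 2: substitute \eqref{eq:T+D=0}.} This replaces $J_T^xv$ by $-\cD X_T^x(h)$, so
\[
(d_xP_Tf,v)=-\E\left[(df(X_T^x),\cD X_T^x(h))\right].
\]

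\textbf{Step 3: chain rule.} Because $f$ is $\mathcal C^1$ with bounded derivative, the chain rule for the Malliavin derivative gives $(df(X_T^x),\cD X_T^x(h))=\cD(f(X_T^x))(h)$. The components of $X_T^x$ are smooth functionals of $B$ with bounded derivatives on $[0,T]$: the $U$ component is linear, and the $Z$ component is $z+e^{iu}\int_0^T e^{iB_t}dt$. Hence $X_T^x$ lies in $\mathbb D^{1,p}$ for every $p$, and so does $F=f(X_T^x)$.

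\textbf{Step 4: duality.} The operator $\delta$ is the adjoint of $\cD$: for $h$ in its domain and $F\in\mathbb D^{1,2}$,
\[
\E[\cD F(h)]=\E[F\,\delta h].
\]
Applying this with $F=f(X_T^x)$ yields \eqref{E23}.

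\textbf{Main obstacle.} The delicate point is that $h$ is random, since it depends on the path through $\int e^{iB_s}ds$. Thus $\cD F(h)$ must be read pathwise as $\langle \cD F,h\rangle_H$, and the duality must hold for random $h\in\mathrm{Dom}(\delta)$. This is exactly what membership of $h$ in the domain of $\delta$ means: $|\E\langle \cD F,h\rangle_H|\le C\|F\|_{L^2}$, with the duality identity holding for all $F\in\mathbb D^{1,2}$. I would also check that $\langle \cD F,h\rangle_H$ is integrable. By Cauchy--Schwarz it is bounded by $\|df\|_\infty\|\cD X_T^x\|_H\|h\|_H$, which is integrable because $\|\cD X_T^x\|_H$ is deterministically bounded on $[0,T]$ and $h\in L^2(\Omega;H)$, a condition implied by $h\in\mathrm{Dom}(\delta)$.
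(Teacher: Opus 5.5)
Your proposal is correct and follows essentially the same route as the paper. Both arguments differentiate under the expectation using the bounded tangent flow $J_T^x$, replace $J_T^xv$ by $-\cD X_T^x(h)$ via \eqref{eq:T+D=0}, and then apply the chain rule and the duality defining $\delta$. The only minor difference is how integrability is obtained. The paper gets integrability of $(d_{X_T^x}f,\cD X_T^x(h))$ directly from \eqref{eq:T+D=0}, since it equals $-(d_{X_T^x}f,J_T^xv)$, which is bounded. So your Cauchy--Schwarz check via $h\in L^2(\Omega;H)$ is not actually needed.
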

As explained in Section \ref{S1}, the same result also holds for the model on $\bbS \times \CC$ with the same equation for the tangent flow, for  the Malliavin derivative and  for the Malliavin dual.

\begin{proof}

On one hand we have 
\begin{equation}
    \label{E26}
     (d_xP_Tf, v){=\left.\frac{d}{d\eps}\right\vert_{\eps=0}\E[f(X_T^{x+\eps v})]}=\E\left[\left(d_{X_T^x}f,J_T^x v\right)\right]
\end{equation}
(and it is {clear} 
that $J_T^x$ has all its moments bounded).
On the other hand, since the derivative of $f$ is bounded, {since} the diffusion $(X_t)$ 
has bounded coefficients as well as their derivatives, 
and since $ h$ is in the domain of~$\delta$ (by hypothesis), we have
\begin{equation}
    \label{E25}
    \E\left[\left(d_{X_T^x}f,\cD(X_T^{x})( h) \right)\right]=\E\left[\cD (f(X_T^{x}))( h)\right]=\E\left[f(X_T^x)\delta h\right].
\end{equation}
From Equation~\eqref{eq:T+D=0} {and the boundedness of the moments of $J_t^x$,} we see that $\left(d_{X_T^x}f,\cD(X_T^{x})(h)\right)$ is integrable, and moreover,   
\begin{equation}
    \label{E27}
    \E\left[\left(d_{X_T^x}f,J_T^x v\right)\right]=-\E\left[\left(d_{X_T^x}f,\cD(X_T^{x})( h)\right)\right].
\end{equation}
From~\eqref{E26},~\eqref{E25} and~\eqref{E27} we get~\eqref{E23}.
\end{proof}

\subsection{The infinitesimal control problem and the Malliavin matrix}

We now describe a solution of Equation \eqref{eq:T+D=0}. By the Malliavin calculus, it is known, {see e.g. \cite{Hairer:11}},  that 
\[
\cD X_T^{x} ( h) = J_T^x \int_0^T (J_t^x)^{-1} V   h(t) dt.
\]

Here, this can be  seen directly since, by integration by parts:
\[
i e^{iu} \int_0^T e^{iB_t} \int_0^t h(s) ds dt  =  i e^{iu} \int_0^T \int_t^T e^{iB_s} ds \;  h(t)  dt. 
\]

As a consequence, Equation \eqref{eq:T+D=0} is equivalent to
\begin{equation}\label{Ih=-v}
\int_0^T  (J_t^x)^{-1} V  h(t) dt =-v.
  \end{equation}
Recalling 
\[(J_t^x)^{-1} V= \begin{pmatrix}
1 \\
 -  i e^{iu} \int_0 ^t e^{i B_s} ds 
\end{pmatrix},
\]
Equation \eqref{Ih=-v} can be written as the system
\begin{equation}\label{eq:syst}
\left\{
\begin{matrix}
\langle h_0, h \rangle= -v_0\\
\langle h_1^{x}, h \rangle =- {v_1}
\\
\langle h_2^{x}, h \rangle =- {v_2}
\\
\end{matrix}
\right.
\end{equation}
where $ \langle \cdot,\cdot\rangle$  denotes the standard scalar product  on {$H$}
and 
\[ h_0(t)= 1, \; h_1^{x}(t)=  \int_0 ^t \sin(u+B_s) ds , \; h_2^{x}(t)= - \int_0 ^t \cos(u+B_s) ds.\]
{Notice that similarly as $J_t^x$, $h_1^x$ and $h_2^x$ depend only on the coordinate~$u$ of~$x$.}

To find a solution, we choose to write 
\begin{equation}\label{E38}
h=\lambda_0 h_0 + \lambda_1 h_1^{x} + \lambda_2 h_2^{x}.\end{equation}

{Note that the}
obtained solution $h$ will {minimise} the norm $\Vert h\Vert_2$ but has no reason to {minimise} the norm of the dual $\Vert \delta h\Vert_2$.

Such a {$ h=h(x,v,T)$} is thus a solution to  the system \eqref{eq:syst} if and only if
\[
\begin{pmatrix}
\lambda_0\\ \lambda_1\\ \lambda_2
\end{pmatrix}
 = - \cC^{x}(T)^{-1} \begin{pmatrix}
v_0\\ v_1\\ v_2
\end{pmatrix}
\]
with $\cC^{x}(T)$ the Gram matrix:
\begin{equation}\label{E36}
\cC^{x}(T)= \begin{pmatrix}
\langle h_0,h_0\rangle& \langle h_0,h_1^{x}\rangle & \langle h_0,h_2^{x}\rangle\\
\langle h_1^{x},h_0\rangle& \langle h_1^{x},h_1^{x}\rangle & \langle h_1^{x},h_2^{x}\rangle\\
\langle h_2^{x},h_0\rangle& \langle h_2^{x},h_1^{x}\rangle & \langle h_2^{x},h_2^{x}\rangle\\
\end{pmatrix}.
\end{equation}

{Note that $\cC^{x}(T)$ may also be written as the reduced Malliavin matrix of $(X_t^x)$ (here $*$ means the transposed matrix):
\begin{equation}\label{E37}
\cC^{x}(T)=\int_0^T  (J_t^x)^{-1} V  V^*   ((J_t^x)^{-1})^*  dt.
\end{equation}
In particular, as $(X_t^x)_t$ satisfies H\"ormander's condition, $\cC^{x}(T)$ is invertible (see \cite{Hairer:11}, Section 4.2). 
Denoting {the random map} $\displaystyle a_T^{x} : H\to \R\times \CC$, $h\mapsto \left(\begin{matrix}
\langle h_0,h\rangle\\\langle h_1^{x}, h\rangle\\\langle h_2^{x},h\rangle
\end{matrix}\right) $, we {observe that} $\cC^{x}(T)$ {is also the matrix of the linear map $a_T^{x} (a_T^{x})^{*}$ with $\displaystyle (a_T^{x})^\ast : \R\times \CC\to H$, $\left(\begin{matrix}
\lambda_0\\ \lambda_1\\ \lambda_2
\end{matrix}\right)\mapsto \lambda_0 h_0+\lambda_1 h_1^x+\lambda_2h_2^x $.}

With this notation at hand, Equation~\eqref{E38} rewrites $ h=(a_T^{x})^{*}(\lambda)$, so the only solution to~\eqref{eq:syst} which is a  linear combination of $(h_0,h_1,h_2)$ is 
\begin{equation}
    \label{E39bis}
    h=-(a_T^{x})^{*}(a_T^{x} (a_T^{x})^{*})^{-1}\begin{pmatrix}
v_0\\ v_1\\ v_2
\end{pmatrix}
\end{equation}

 In fact $a_T^x$ and the matrix $\cC^x$ depend only on $u$ and satisfy:
\begin{equation}\label{ECu}
{a_T^x}=\begin{pmatrix}
 1 & 0\\
 0 & R(u)
\end{pmatrix}a_T^0\quad\text{and}\quad{\cC^x(T)}= \begin{pmatrix}
 1 & 0\\
 0 & R(u)
\end{pmatrix} \cC^0(T)  \begin{pmatrix}
 1 & 0\\
 0 & R(-u)
\end{pmatrix}
\end{equation}
 where $R(u)$ denotes the rotation: $R(u)= \begin{pmatrix} \cos u & -\sin u \\
                 \sin u & \cos u
 \end{pmatrix}.$
 In the remainder of this article, we will simply use the notations $a_T$ and $\cC$ for $a_T^0$ and $\cC^0$ respectively.
 
 In particular, \eqref{E39bis} can be rewritten:
 \begin{equation}\label{E39}
 h=-a_T^{*}\left(a_Ta_T^{*}\right)^{-1}\begin{pmatrix}
 1 & 0\\
 0 & R(-u)
\end{pmatrix}\begin{pmatrix}
v_0\\ v_1\\ v_2
\end{pmatrix}.
 \end{equation}

\subsection{Malliavin calculus in a basis and computation of the dual}
\label{S2.3}

Our method for integration  by parts is based on a {non-co-adapted}  construction of the Brownian motion $(B_t)_{t\in [0,T]}$ in the model~\eqref{E28} where  $T>0$ is fixed. This construction depends on a  choice of an orthonormal basis. Working in a specific basis serves only as an intermediate step to do the computation. At the end, we will see that the computation of the dual will not depend on this choice.

Let $(e_k)_{k\geq 0}$ be an orthonormal basis of $L^2([0,1],\R)$ with {for simplicity} $e_0\equiv 1$  and set  $g_k(t)=\int_0^t e_k(s) ds$ for $k\geq 0$.
We then consider the family  $ (g_{k,T})_{k\geq 0}$ on $H=L^2([0,T],\R)$  obtained from dilation;  that is 
\begin{equation}
    \label{E29}
   \forall k\ge 0,\quad g_{k,T}(t):={\sqrt T}g_k\left(\frac{t}{T}\right),\ t\in[0,T].
    \end{equation}
    In particular, {$(e_{k,T})_{k\geq 0}:=(\dot g_{k,T})_{k\geq 0}$} is an orthonormal basis  of $L^2([0,T],\R)$. We also have
    \begin{equation}\label{E30}
    g_{0,T}(t)= \frac{t}{\sqrt T}, \ t\in[0,T],  \hbox{and}\   g_{k,T}(T)=0 \text{ for all } k\geq 1.
\end{equation}
A standard choice is given by the Karhunen-Loève basis: 
\begin{equation}\label{E41}
g_{0,T}(t)=\frac{t}{\sqrt T}\quad\hbox{and for $k\ge 1$,}\quad 
 g_{k,T}(t)=\frac{\sqrt{2T}}{\pi k}\sin\left(\frac{k\pi t}{T}\right), \;
 { t\in [0,T]}.
\end{equation}

 With a sequence $(\xi_k)_{k\ge0}$ of independent real-valued Gaussian~$\SN(0,1)$ variables, it is well-known that we can construct a Brownian motion $\displaystyle (B_t)_{t\in [0,T]}$ via the formula 
 \begin{equation}
     \label{E31}
     \forall t\in [0,T],\quad B_t=\sum_{k=0}^\infty g_{k,T}(t)\xi_k
 \end{equation}
and  that the convergence of the series is a.s. uniform in $t \in [0,1]$  (see~\cite{ItoNisio}).

 For future use, we also introduce the 
Brownian motion $\displaystyle (\tilde B_t)_{t\in [0,1]}$ via the formula 
 \begin{equation}
     \label{E31bis}
     \forall t\in [0,1],\quad \tilde  B_t=\sum_{k=0}^\infty g_{k}(t)\xi_k,
 \end{equation}
 and  notice that
 \[
 B_{T s} =\sqrt T \tilde B_s, \quad s \in [0,1].
 \]

 Also, any element $h\in H$ has a decomposition 
 \begin{equation}
     \label{E32}
     [0,T]\ni t\mapsto h(t)=\sum_{k\ge 0}\alpha_k\dot g_{k,T}(t),
 \end{equation}
 with $(\alpha_k)_{k\ge 0}$ in the space $\ell^2$ of square integrable sequences and which will always be considered as a column vector. 
 
 With the decomposition~\eqref{E32} of a possibly random $h$, we obtain 
 \begin{equation}
     \label{E33}
     B_t+\int_0^th(s)ds=\sum_{k=0}^\infty g_{k,T}(t)(\xi_k+\alpha_k)
 \end{equation}
 with $(\alpha_k)_{k\ge 0}$ a $\ell^2$-valued random variable.

 \begin{proposition}
 \label{P4}
 Assume that the Brownian motion $(B_t)_{t\in[0,T]}$ is given by~\eqref{E31}: $B_t=\sum\limits_{k=0}^\infty g_{k,T}(t) \xi_k$. 
 Let $h$ be a $L^1(\Omega,H)$ random variable given by 
 \begin{equation}
     \label{E32bis}
      h(\omega)(t)=\sum_{k\ge 0}\alpha_k(\omega)\dot g_{k,T}(t)\quad\hbox{ with for $k\ge 0$},\quad \alpha_k(\omega)=\SA_k\left((\xi_\ell(\omega))_{\ell\ge 0}\right)
 \end{equation}
 for some functions $\SA_k: \R^\N \to \R$.
 Assume that for all {$K\geq 0$,} $ 0\leq k \leq K$,  the random maps
 \begin{equation}
     \label{EP4.6}
     \begin{split}
         \SA_k^{{K}}(\omega) : \R^{K+1}&\to \R\\
         (z_0,\ldots,z_K)&\mapsto
        \SA_k \left(z_0,\ldots,z_K,\xi_{[K+1,\infty)}(\omega)\right)
     \end{split}
 \end{equation}
 are {integrable and with integrable derivative on $\mathbb{R}^{K+1}$ with respect to the standard Gaussian distribution, a.s. in $\xi_{[K+1,\infty)}$}, where we denoted 
  $\xi_{[K+1,\infty)}(\omega):=(\xi_{K+1}(
  \omega),\xi_{K+2}(\omega),\ldots)$. Let 
  \begin{equation}\label{E46}
  \mathfrak{d} h:=\sum_{k\ge 0}\left(\alpha_k\xi_k-\frac{\partial \alpha_k}{\partial\xi_k}\right) 
  \end{equation}
 and assume that this series converges in $L^1(\Omega,\R)$.
 
Then $h$ is in the domain of the adjoint of the Malliavin derivative $\delta$ and  $\delta h$ is given by $ \mathfrak{d} h$, that is, for any { $\cC^1$ function $f : \R\times \CC\to \R$, bounded with bounded derivative }
we have the integration by parts 
\begin{equation}
    \label{EP4.3}
    \E\left[\left(d_{X_T^x}f,\cD(X_T^{x})( h)\right)\right]=\E\left[f(X_T^x){\mathfrak{d}}
    h\right].
\end{equation}
 \end{proposition}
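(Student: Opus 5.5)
The strategy is to reduce \eqref{EP4.3} to one-dimensional Gaussian integrations by parts, one coordinate $\xi_k$ at a time, and then sum over $k$. Write $F(\xi):=f\bigl(X_T(x,(\sum_\ell g_{\ell,T}(t)\xi_\ell)_t)\bigr)$, viewed as a function of the Gaussian sequence $\xi=(\xi_\ell)_{\ell\ge0}$.

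\textbf{Step 1: identify the derivative.} By \eqref{E33}, perturbing $B$ in the direction $\int_0^\cdot h$ is the same as shifting $\xi_k\mapsto\xi_k+\eps\alpha_k$. Hence, at least formally,
\[
\left(d_{X_T^x}f,\cD(X_T^x)(h)\right)=\sum_{k\ge0}\alpha_k\,\partial_{\xi_k}F .
\]
Here $\partial_{\xi_k}F=\left(d_{X_T^x}f,\cD X_T^x(\dot g_{k,T})\right)$, and from the explicit formula for $\cD X_T^x$ this is bounded by $C(1+T)\|df\|_\infty$ uniformly in $k$ and $\omega$. The series then converges in $L^1$, since
\[
\sum_k\alpha_k\,\cD X_T^x(\dot g_{k,T})=\cD X_T^x(h)
\]
by linearity and continuity of $\cD X_T^x$ on $H$. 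That continuity holds because $\cD X_T^x(h)=J_T^x\int_0^T (J_t^x)^{-1}V h(t)\,dt$ is bounded by $C\|h\|_H$, and $h\in L^1(\Omega,H)$.

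\textbf{Step 2: integrate by parts in each coordinate.} Fix $k$ and condition on all $\xi_\ell$ with $\ell\neq k$. One-dimensional Gaussian integration by parts gives
\[
\E\left[\alpha_k\,\partial_{\xi_k}F\right]=\E\left[F\left(\alpha_k\xi_k-\partial_{\xi_k}\alpha_k\right)\right].
\]
This is justified because $F$ is bounded and $C^1$ in $\xi_k$ with bounded derivative, and because $\alpha_k$ and $\partial\alpha_k/\partial\xi_k$ are Gaussian-integrable along the $\xi_k$-line by hypothesis \eqref{EP4.6} (take $K\ge k$). The boundary terms vanish since $\alpha_k\varphi$ is integrable and absolutely continuous on the line, so $\alpha_k F\varphi$ has zero limits along a subsequence. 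Fubini and independence of the $\xi_\ell$ then let us integrate over the remaining coordinates.

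\textbf{Step 3: sum over $k$.} Summing the identities of Step 2, the left side tends to $\E[(d f,\cD X_T^x(h))]$ by Step 1. The right side tends to $\E[F\,\mathfrak d h]$, because the series \eqref{E46} converges in $L^1$ by assumption and $F$ is bounded.

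The delicate point is the exchange of sum and expectation on the left in Step 1. The individual terms $\alpha_k\partial_{\xi_k}F$ are not absolutely summable in general. I would therefore work with partial sums $h^K=\sum_{k\le K}\alpha_k\dot g_{k,T}$. Since $h^K\to h$ in $H$ pointwise, and $\|h^K\|_H\le\|h\|_H$ gives a dominating function in $L^1$, dominated convergence gives
\[
\E\left[(df,\cD X_T^x(h^K))\right]\to\E\left[(df,\cD X_T^x(h))\right].
\]
I would also verify that $F$ is genuinely differentiable in the coordinates $\xi_k$. This follows from the a.s. uniform convergence of \eqref{E31} (a shift in $\xi_k$ changes $B$ by the deterministic bounded function $\eps g_{k,T}$) together with the explicit form \eqref{Edef}.
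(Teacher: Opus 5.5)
Your proposal is correct and follows essentially the paper's route: truncate to $h^K=\sum_{k\le K}\alpha_k\dot g_{k,T}$, integrate by parts against the Gaussian measure in finitely many coordinates, and let $K\to\infty$ using $h^K\to h$ in $L^1(\Omega,H)$ and the $L^1$ convergence of the partial sums of $\mathfrak{d}h$. You integrate one coordinate at a time, whereas the paper works conditionally on $\xi_{[K+1,\infty)}$ in $\R^{K+1}$ with the divergence of $\SA^K$, which is the same identity; your bound $\|\cD X_T^x(h)\|\le C_T\|h\|_H$ also makes explicit the left-hand limit that the paper leaves implicit.
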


 \begin{proof} The proof will proceed by finite dimensional approximations. For $K\geq 0$, 
 let \[
 h^K:=\sum_{k=0}^K\alpha_k\dot g_{k,T}
\] 
and set $$\begin{array}{ccccc}
F^K & : & \R^{K+1}&\to &\R\\
 && (z^0,\ldots,z^K)&\mapsto& F^K(z^0,\ldots,z^K) \\
\end{array}$$
  the  random $\xi_{[K+1,\infty)}$-measurable function such that $F^K(\xi_0,\ldots,\xi_K)=f(X_T^{x})$. 
  
  With these notations, the Malliavin derivative in the direction $h^K$ writes:
\begin{align*}
\left(d_{X_T^{x}}f,\cD(X_T^{x})(h^K) \right)&{=\left.\frac{d}{d\eps}\right\vert_{\eps=0}f\left(X_T\left(x,\left(B_t+\eps\sum\limits_{k=0}^K\alpha_kg_{k,T}(t)\right)\right)\right)}\\
&{=\left.\frac{d}{d\eps}\right\vert_{\eps=0}F^K\left((\xi_k+\eps\alpha_k)_{0\leq k\leq K}\right)}\\&=\langle\nabla F^K, \SA^K\rangle(\xi_0,\ldots,\xi_K)
\end{align*}
  where $\SA^K$ is the random map
whose coordinates are defined in~\eqref{EP4.6}.
 Now letting $\phi_K(z^0,\ldots,z^K)$ be the centered Gaussian density with covariance $I_{K+1}$ in $\R^{K+1}$ we have, by integration by parts and using $\displaystyle \nabla \phi_K(z^0,\ldots,z^K)=-\phi_K(z^0,\ldots,z^K)\cdot(z^0,\ldots,z^K)$, 
 \begin{align*}
     &\E\left[\left(d_{X_T^{x}}f,\cD(X_T^{x})(h^K)\right)|\xi_{[K+1,\infty)}\right]\\&=\int_{\R^{K+1}}\langle\nabla F^K, \SA^K\rangle(z^0,\ldots,z^K)\phi_K(z^0,\ldots,z^K)dz^0\ldots dz^K\\
     &=\int_{\R^{K+1}}F^K(z^0,\ldots,z^K)\left(\langle (z^0,\ldots,z^K),\SA^K(z^0,\ldots,z^K)\rangle
     -{\rm div}(\SA^K)(z^0,\ldots,z^K)
     \right)\\
     &\qquad\qquad\times \phi_K(z^0,\ldots,z^K)dz^0\ldots dz^K\\
     &=\E\left[f(X_T^{x})\mathfrak{d} h^K|\xi_{[K+1,\infty)}\right]
 \end{align*}

 with 
 \[
\mathfrak{d} h^K :=\sum_{k= 0}^K\left(\alpha_k\xi_k-\frac{\partial \alpha_k}{\partial\xi_k}\right).
 \]

 Taking expectation we obtain 
 \begin{equation}
     \label{EP4.5}
     \E\left[\left(d_{X_T^{x}}f,\cD(X_T^{x})(h^K)\right)\right]=\E\left[f(X_T^{x}) \mathfrak{d}  h^K\right].
 \end{equation}
{By hypothesis, $h\in L^1(\Omega,H)$ and $\mathfrak{d}h\in L^1(\Omega,\mathbb{R})$, thus,} 
letting $K\to \infty$, 
$h^K$ converges to 
 $h$ in $L^1(\Omega,H)$ and 
 $ \mathfrak{d}  h^K$ converges to $ \mathfrak{d}  h$ in $L^1(\Omega,\R)$. The expected result then follows from \eqref{EP4.5}.
 \end{proof}

 In order to be able to compute the Malliavin dual of Equation~\eqref{E23} with the help of Proposition~\ref{P4}, we need to express the solution $h$ found in Equation~\eqref{E39} in the chosen basis. For this, it is sufficient to find the matrix of $a_T$. 

We compute the column vectors, {recalling} $e_{k,T}:=\dot g_{k,T}$, $k\ge 0$:{
\begin{equation}
    \label{E40}
    I_0(T):=a_T(e_{0,T})=\begin{pmatrix}\sqrt{T}\\i\int_0^Te^{iB_t} \left(g_{0,T}(t)-{\sqrt T}\right) dt \end{pmatrix} =\begin{pmatrix}\sqrt{T}\\{\sqrt T}{i
    }\int_0^Te^{iB_t} \left(g_0(t/T)-1\right) dt \end{pmatrix}\end{equation}}
    and for $k\ge 1$,
    \begin{equation}\label{E42}
    I_k(T):=a_T(e_{k,T})=\begin{pmatrix} 0\\  i
    \int_0^T e^{iB_t} g_{k,T}(t)dt\end{pmatrix}
     =\begin{pmatrix} 0\\ \sqrt T i
    \int_0^T e^{iB_t} g_{k}(t/T)dt\end{pmatrix}.
\end{equation}
The last formulae are obtained by an integration by parts.

 Now define 
\begin{equation}
    \label{E43}
    \A_T=\left(I_0(T)\ I_1(T)\ \ldots \ I_k(T)\ \ldots\right).
\end{equation}
It is the matrix of {the linear map $a_T :  H\to\R\times \CC$} in the orthonormal basis $(e_{k,T})_{k\ge 0}$ {and therefore satisfies} 
\begin{equation}
    \label{E44}
    \A_T\A_T^{*}= a_Ta_T^{*}=\cC(T).
\end{equation}
Hence $\A_T\A_T^{*}$ is invertible using Malliavin calculus.

It is now possible to write the solution  of Equation~\eqref{E39} in coordinates in the orthonormal basis; that is, $h=\sum\limits_{k\geq 0} \alpha _k e_{k,T}$ is the solution  of Equation~\eqref{eq:T+D=0} given by 
\eqref{E39} if and only if 
\begin{equation}
    \label{E45}
    \alpha=-\A_T^{*} (\A_T\A_T^{*})^{-1}\begin{pmatrix}
 1 & 0\\
 0 & R(-u)
\end{pmatrix}\begin{pmatrix}
v_0\\ v_1\\ v_2
\end{pmatrix}.
\end{equation}

\begin{theorem}\label{P4bis}
Let $T>0$, $x=\begin{pmatrix}
 u\\ z
\end{pmatrix} \in \R \times \CC$, $v\in \R\times \CC$. Consider $h=h(x,v,T)$ defined by \eqref{E39}, then $h$ is {in  $L^p(\Omega,H)$ for any $p\geq 1$} and is also in the domain of $\delta$. Its Malliavin dual $\delta h=\delta h(x,v,T)$ is given by
\begin{equation}\label{EDual}
\begin{split}
   {-} \delta h &=\begin{pmatrix} B_T\\  -i
    \int_0^T e^{iB_t}(B_T-B_t) dt\end{pmatrix}^{*}\cC(T)^{-1}\begin{pmatrix}
 1 & 0\\
 0 & R(-u)
\end{pmatrix}v\\
    &+\int_0^T\int_0^t \begin{pmatrix} s\\  i
    \int_0^s e^{iB_{\rho}}(\rho-s)d\rho\end{pmatrix}^{*}\cC(T)^{-1}\SK(t,s)\cC(T)^{-1}\begin{pmatrix}
 1 & 0\\
 0 & R(-u)
\end{pmatrix}v\,dsdt
\end{split}
\end{equation}
where the matrix $\SK$ is defined by:
\begin{equation}\label{EK}
    \SK(t,s)=\begin{pmatrix}
     0 & {e^{i
     B_s}}^{*}\\
     e^{i
     B_s} & -e^{i
     B_s}\left(i \int_0^t
     e^{i
     B_r}dr\right)^{*}-i \int_0^t
     e^{i
     B_r}dr{e^{i
     B_s}}^{*}
    \end{pmatrix}.
    \end{equation}
{The Malliavin dual $\delta h$ given by \eqref{EDual} is also in $L^p(\Omega,\R)$ for any $p\geq 1$.}

{
Moreover for  any bounded measurable function $f : \R\times \CC\to \R$ and any $T>0 , x\in \R\times \CC$ and $v\in \R\times \CC$, one has the following Bismut-Elworthy-Li type formula: \begin{equation}
    \label{IPP}
    (d_xP_Tf, v)=-\E\left[f(X_T^x)\delta h\right]
\end{equation}
where $\delta h= \delta h(x,v,T)$ is given by \eqref{EDual}.
}
\end{theorem}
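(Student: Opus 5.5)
The plan is to apply Proposition~\ref{P4} to the coefficients $\alpha$ given by \eqref{E45}, compute $\mathfrak d h$ explicitly, and then pass from smooth to bounded measurable $f$. Write $w:=\begin{pmatrix}1&0\\0&R(-u)\end{pmatrix}v$; it is deterministic. Then $\alpha_k=-I_k(T)^*\cC(T)^{-1}w$ with $\cC(T)=\sum_k I_k(T)I_k(T)^*$.

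\textbf{First term of the dual.} We need $\sum_k\alpha_k\xi_k$. Since $\sum_k g_{k,T}(t)\xi_k=B_t$, we get $\sum_k \xi_k I_k(T)=a_T(\dot B)$ formally. More rigorously, from \eqref{E40}--\eqref{E42} its first component is $\sqrt T\xi_0=B_T$. Its second component is $i\int_0^Te^{iB_t}(\sum_k g_{k,T}(t)\xi_k - g_{0,T}(T)\xi_0)\,dt=-i\int_0^Te^{iB_t}(B_T-B_t)\,dt$, using the a.s.\ uniform convergence \eqref{E31}. This gives exactly the first line of \eqref{EDual}, with the minus sign.

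\textbf{Divergence term.} We compute $\partial\alpha_k/\partial\xi_k$. Since $\partial B_s/\partial\xi_k=g_{k,T}(s)$, we have $\partial_{\xi_k}I_j(T)=\begin{pmatrix}0\\ -\int_0^T e^{iB_t}g_{j,T}(t)g_{k,T}(t)\,dt\,(\cdot)\end{pmatrix}$, suitably adjusted for $j=0$. Differentiating the inverse gives $\partial_k\cC^{-1}=-\cC^{-1}(\partial_k\cC)\cC^{-1}$, with $\partial_k\cC=\sum_j(\partial_kI_jI_j^*+I_j\partial_kI_j^*)$. Hence
\[
-\sum_k\partial_k\alpha_k=\sum_k\Big(\partial_kI_k^*\,\cC^{-1}w-I_k^*\cC^{-1}(\partial_k\cC)\cC^{-1}w\Big).
\]
Each sum over $k$ and $j$ should be resummed using the completeness relation $\sum_k g_{k,T}(s)g_{k,T}(t)=s\wedge t$, which is the covariance of $B$. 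Doing this, the terms $\sum_k\partial_kI_k^*$ and $\sum_k I_k^*\cC^{-1}(\cdots)$ collapse into double integrals in $(t,s)$ over $0\le s\le t\le T$. After integrating by parts in time and rewriting $\int_0^t e^{iB_r}dr$, these should assemble into the kernel $\SK(t,s)$ of \eqref{EK}, paired with the vector $\begin{pmatrix}s\\ i\int_0^s e^{iB_\rho}(\rho-s)d\rho\end{pmatrix}$, which is $\sum_k g_{k,T}(s)I_k$ resummed.

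I expect the first-order part $\sum_k\partial_kI_k^*\cC^{-1}w$ either to cancel against part of the second term or to be absorbed into the $0$ entries of $\SK$. I will check this carefully. This bookkeeping is the main computational risk, but it is routine. Because the result is basis-free, the hypotheses of Proposition~\ref{P4} only need checking in one fixed basis, for instance Karhunen--Loève.

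\textbf{Integrability.} This is the main obstacle. All entries of $I_k$, $\partial I_k$ and the integrals above have all moments, bounded polynomially in $T$. What remains is $\E[\|\cC(T)^{-1}\|^p]<\infty$ for every $p$. This is exactly Hörmander's theorem for the reduced Malliavin matrix \eqref{E37}; see \cite{Hairer:11}. By Hölder, $h\in L^p(\Omega,H)$, the explicit $\mathfrak dh$ is in $L^p$, and the partial sums $\mathfrak dh^K$ converge in $L^1$. Convergence follows because the same resummation identities hold with $s\wedge t$ replaced by the truncated kernel $\sum_{k\le K}g_{k,T}(s)g_{k,T}(t)$, which converges uniformly. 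The conditional integrability of the finite-dimensional sections $\SA_k^K$ needs negative moments of $\cC$ conditionally on the tail $\xi_{[K+1,\infty)}$. To avoid this, I would instead verify the integration by parts through $\E[\cdot]$ directly, using a smooth cutoff $\chi(\varepsilon\|\cC^{-1}\|)$ multiplying $h$ and letting $\varepsilon\to0$ by dominated convergence.

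\textbf{Passage to bounded measurable $f$.} Proposition~\ref{P3} and Proposition~\ref{P4} give \eqref{IPP} for $\cC^1$ bounded $f$ with bounded derivative. To extend it, first apply the formula to $P_\epsilon f$ and smooth; hypoellipticity makes $P_\epsilon f$ smooth, or one can mollify. This yields $|(d_xP_Tf,v)|\le\|f\|_\infty\E|\delta h|$, locally uniformly in $x$. Then approximate a bounded measurable $f$ by smooth $f_n\to f$ a.e., bounded by $\|f\|_\infty$. Since $X_T^x$ has a density (Hörmander), $P_Tf_n\to P_Tf$ and the right-hand sides converge by dominated convergence, uniformly in $x$ on compacts. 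Hence $P_Tf$ is $\cC^1$ and \eqref{IPP} holds.
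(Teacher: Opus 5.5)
Your plan follows the paper's proof in outline. It gets $h\in L^p(\Omega,H)$ from the negative moments of $\cC(T)$, computes $\mathfrak d h$ via $\sum_k g_{k,T}(s)g_{k,T}(t)=s\wedge t$, proves $L^p$ convergence of the series, regularises to make Proposition~\ref{P4} applicable, and approximates a bounded measurable $f$.

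\textbf{Regularisation.} This is the one genuinely different device. The paper replaces $\cC(T)^{-1}$ by $(\cC(T)+\eps I)^{-1}$. This is analytic in the $\xi_k$ and bounded by both $1/\eps$ and $\|\cC(T)^{-1}\|$. Its dual is \eqref{EDual} with $\cC(T)^{-1}$ replaced by $(\cC(T)+\eps I)^{-1}$, so $\eps\to0$ is plain dominated convergence. Your cutoff $\chi_\eps=\chi(\eps\|\cC(T)^{-1}\|)$ also works, but not by dominated convergence alone. Indeed $\mathfrak d(\chi_\eps h)=\chi_\eps\,\mathfrak d h-\sum_k\alpha_k\,\partial_{\xi_k}\chi_\eps$, and you must show the extra term vanishes. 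It does: on the support of $\chi'(\eps\,\cdot)$ one has $\eps\|\cC(T)^{-1}\|^2\le 2\|\cC(T)^{-1}\|$. Hence the term is bounded by a constant times $\|\cC(T)^{-1}\|\,\|\cD\cC(T)\|_H\,\|h\|_H\,\mathbb{1}_{\{\eps\|\cC(T)^{-1}\|\ge1\}}$, which tends to $0$ in $L^1$. Use a smooth function of $\cC(T)$ (Frobenius norm, or $\mathrm{tr}\,\cC(T)^{-1}$) rather than the operator norm, which is not differentiable.

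\textbf{Divergence term.} You hedge about $\sum_k\partial_{\xi_k}I_k(T)^*\cC(T)^{-1}w$, but it vanishes identically on its own. One has $\sum_k\partial_{\xi_k}I_k(T)=-\binom{0}{\int_0^Te^{iB_t}(\sum_kg_{k,T}(t)^2-t)\,dt}=0$ by \eqref{ESum}. The $-t$ comes from the $k=0$ correction $\sqrt T\,g_{0,T}(t)=t$. For the other term, computing $\partial_{\xi_k}\cC(T)$ from \eqref{E37} is much shorter than resumming over both $j$ and $k$. Since $\partial_{\xi_k}(J_t^0)^{-1}V=\binom{0}{\int_0^te^{iB_s}g_{k,T}(s)\,ds}$, one gets directly $\partial_{\xi_k}\cC(T)=\int_0^T\int_0^tg_{k,T}(s)\SK(t,s)\,ds\,dt$. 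After that, only $\sum_kI_k(T)g_{k,T}(s)=\binom{s}{i\int_0^se^{iB_\rho}(\rho-s)\,d\rho}$ is needed.

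\textbf{Extension to bounded measurable $f$.} Dominated convergence gives $\E[f_n(X_T^x)\delta h]\to\E[f(X_T^x)\delta h]$ only for each fixed $x$. Knowing that $X_T^x$ has a density does not give the uniformity on compacts that you invoke to conclude $P_Tf$ is differentiable. The paper obtains it from four ingredients:
\begin{itemize}
\item Cauchy--Schwarz;
\item the uniform bound $\sup_{x,\|v\|\le1}\E[\delta h(x,v,T)^2]<\infty$;
\item the local boundedness $\sup_{x\in K,\,y\in B_R}p_T(x,y)<\infty$ of the smooth heat kernel;
\item the tail bound $\P(X_T^x\notin B_R)\le\P(|B_T|\ge R-R_0)$ for $x\in K$.
\end{itemize}
Alternatively, integrate $(dP_Tf_n,v)$ along segments, pass to the limit by bounded convergence, and check that $x\mapsto\E[f(X_T^x)\delta h(x,v,T)]$ is continuous. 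Your preliminary step with $P_\epsilon f$ is unnecessary.
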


\begin{proof} 
{We proceed in five steps. In a first step, we prove $h$ is in $L^p(\Omega,H)$ for any $p\in[1,\infty)$. In a second step
we show that the series  $\mathfrak{d}h$ is well defined and   is equal to \eqref{EDual}. In a third step we establish the $L^p$ convergence of the series defining $\mathfrak{d} h$. 
In a fourth step, we prove that the Bismut-Elworthy-Li type formula \eqref{IPP}
is valid for any  bounded $\cC^1$ function with bounded derivative. For this we need to  introduce some regularised random variables $h^\ve$, do the Malliavin integration by parts and pass to the limit.
In the final step, we extend the the Bismut-Elworthy-Li type formula  \eqref{IPP} to any measurable  bounded function.}

{\underline{Step 1:}}
{We first show that $h$ is in $L^p(\Omega,H)$ for any $p\in[1,\infty)$. 
We have
\[
\Vert h\Vert_H^2= \sum_{k\geq 0} \alpha_k^2 \leq 
\sum_{k\geq 0} \|I_k(T)\|^2 \; \Vert \cC(T)^{-1} \Vert^2 \;   \|v\|^2.
\]
where  the $I_k(T)$ for $k\geq 0$ are given by \eqref{E40} and \eqref{E42}.
By the Cauchy-Schwarz inequality, one has 
$$\|I_k(T)\|^2\leq T\int_0^T g_{k,T}(t)^2dt+T(1+T^{2})\delta_{k=0} $$
and since  $\sum\limits_{k\geq 0}g_{k,T}(t)^2=t$, one has: $$C_T:= \sum\limits_{k\geq 0}\|I_k(T)\|^2\leq T\int_0^T tdt+T(1+T^{2})<+\infty.$$
In particular:
\[
\Vert h\Vert_H\leq C_T^{1/2}
 \; \Vert \cC(T)^{-1} \Vert \;   \|v\|.
\]
and $h\in L^p(\Omega,H)$ for any $p\in[1,\infty)$
since by Malliavin calculus $\cC(T)$ is a.s. invertible with  inverse moments of any orders. 
}

{\underline{Step 2:}} We consider here  only the case $u=0$. In view of  \eqref{E45}, the general case is obtained by rotation. We start with the computation of
$
 {-} \mathfrak{d} h={-}\sum\limits_{k\ge 0}\left(\alpha_k\xi_k-\frac{\partial \alpha_k}{\partial\xi_k}\right).
 $

We look at the first term in $-\mathfrak{d}h$:
\begin{equation}
\label{first-term}
\begin{split}
-\sum\limits_{k\ge 0}\alpha_k\xi_k&=\sum\limits_{k\geq 0}\left(I_k(T)\xi_k\right)^{*}\cC(T)^{-1}v\\
&=\begin{pmatrix} \sqrt{T}\xi_0\\  i
\left(\sum\limits_{k\geq 0}\int_0^T e^{iB_t} \xi_kg_{k,T}(t)dt-\sqrt{T}\int_0^T e^{iB_t}\xi_0dt\right)\end{pmatrix}^{*}\cC(T)^{-1}v.
\end{split}
\end{equation}
By construction of the Brownian motion, we have
the  a.s. uniform convergence on $[0,T]$  of $\sum\limits_{k\geq 0}\xi_kg_{k,T}(t)$ to $B_t$ and thus  the a.s.
convergence of 
$\sum\limits_{k\geq 0}\int_0^T e^{iB_t} \xi_kg_{k,T}(t)dt$ to $\int_0^T e^{iB_t}B_tdt$.
Thus:
\begin{equation}\label{Ederivee1}
{-}\sum\limits_{k\ge 0}\alpha_k\xi_k=\begin{pmatrix} \sqrt{T}\xi_0\\  i\int_0^T e^{iB_t}(B_t-\sqrt{T}\xi_0) dt\end{pmatrix}^*\cC(T)^{-1}v=\begin{pmatrix} B_T\\  -i
\int_0^T e^{iB_t}(B_T-B_t) dt\end{pmatrix}^* \cC(T)^{-1}v.
\end{equation}

We now turn to the second term of $\mathfrak{d}h$. We have
\begin{equation}\label{Ederivee2}{-}\sum\limits_{k\geq 0}\frac{\partial \alpha_k}{\partial\xi_k}=\sum\limits_{k\geq 0}\frac{\partial I_k(T) }{\partial\xi_k}^{*}\cC(T)^{-1}v-\sum\limits_{k\geq 0}I_k(T)^{*}\cC(T)^{-1}\frac{\partial\cC(T) }{\partial\xi_k}\cC(T)^{-1}v.\end{equation}
As $\frac{\partial I_k(T)}{\partial \xi_k}=-\begin{pmatrix} 0\\  
\int_0^T e^{iB_t}g_{k,T}(t)^2 dt\end{pmatrix}+\begin{pmatrix} 0\\
\int_0^Te^{iB_t}\sqrt{T}g_{0,T}(t)dt\end{pmatrix}\delta_{\{k=0\}}$,
 \begin{align*}\sum\limits_{k\geq 0}\frac{\partial I_k(T)}{\partial\xi_k}
&=\begin{pmatrix} 0\\  -\int_0^T e^{iB_t}\left(\sum\limits_{k\geq 0}g_{k,T}(t)^2-t\right) dt\end{pmatrix}.\end{align*}
 Notice that 
\begin{equation}\label{ESum}
\sum\limits_{k\geq 0}g_{k,T}(t)^2=\sum\limits_{k\geq 0}\langle \dot g_{k,T},\mathbb{1}_{[0,t]}\rangle^2=\|\mathbb{1}_{[0,t]}\|_2^2=t,\end{equation}
where $\langle~,~\rangle$ and $\|\cdot\|_2$ still denote the scalar product and the norm on $H$.
 
Then the first term in \eqref{Ederivee2} finally vanishes. 

Using \eqref{E37} and remarking that $\frac{\partial }{\partial \xi_k}(J_t^0)^{-1}V=\begin{pmatrix} 0\\  
\int_0^t e^{iB_s}g_{k,T}(s) ds\end{pmatrix}$, we obtain
\begin{equation}
    \frac{\partial \cC(T)}{\partial \xi_k}=\int_0^T\int_0^tg_{k,T}(s)\SK(t,s)dsdt
\end{equation}
where  $\SK(t,s)$ is given by \eqref{EK}. 

We also have 
\begin{equation*} I_k(T)g_{k,T}(s)=\begin{pmatrix} 0\\  i
\int_0^T e^{iB_{\rho}}g_{k,T}(\rho)g_{k,T}(s)d\rho\end{pmatrix}+\begin{pmatrix} s\\  -i
\int_0^Tse^{iB_{\rho}}d\rho\end{pmatrix}\delta_{\{k=0\}}.
\end{equation*}
Noticing that
 $\sum\limits_{k\geq 0} g_{k,T}(\rho) g_{k,T}(s)=\langle \mathbb{1}_{[0,\rho]},\mathbb{1}_{[0,s]}\rangle=\rho\wedge s$ and using Fubini,
we get $$\sum\limits_{k\geq 0} I_k(T)g_{k,T}(s)=\begin{pmatrix} s\\  i
\int_0^T e^{iB_{\rho}}(\rho\wedge s-s)d\rho\end{pmatrix}=\begin{pmatrix} s\\  i
\int_0^s e^{iB_{\rho}}(\rho-s)d\rho\end{pmatrix}.$$
Finally we obtain the convergence and the  desired expression for $\mathfrak{d}h$.

\underline{Step 3:} 
Let us establish the $L^p$ convergence of the series $
  \mathfrak{d} h=\sum\limits_{k\ge 0}\left(\alpha_k\xi_k-\frac{\partial \alpha_k}{\partial\xi_k}\right)
 $. For the term $\sum\limits_{k\ge 0}\alpha_k\xi_k$, looking at the second line of~\eqref{first-term} and using the fact that $\cC(T)^{-1}$ has finite moments of all orders, we only need to prove that 
\[
\sum\limits_{k\geq K}\int_0^T e^{iB_t} \xi_kg_{k,T}(t)dt\xrightarrow[K\to \infty]{ L^p } 0.
\]
Letting $C_p=\E[|N|^p]$ with~$N$ a $\SN(0,1)$ random variable we obtain
\begin{align*}
    &\E\left[\left|
    \sum\limits_{k\geq K}\int_0^T e^{iB_t} \xi_kg_{k,T}(t)dt
    \right|^p\right]\le T^{p-1}\int_0^T
    \E\left[\left|
    \sum\limits_{k\geq K} \xi_kg_{k,T}(t)
    \right|^p\right]
    dt\\
    &\le C_pT^{p-1}\int_0^T\left(\sum\limits_{k\geq K} g_{k,T}^2(t)\right)^{p/2}dt
    \xrightarrow[K\to \infty]{} 0
\end{align*}
by the dominated convergence theorem and since $\sum\limits_{k\geq 0} g_{k,T}^2(t )=t$. 
Using 
\[\sum\limits_{k\geq 0} |g_{k,T}(\rho) g_{k,T}(s)| \leq \left(\sum\limits_{k\geq 0} g_{k,T}^2(\rho ) \right)^{1/2}\left(\sum\limits_{k\geq 0} g_{k,T}^2(s ) \right)^{1/2} =\sqrt{\rho s},\]
the $L^p$ convergence of the series $\sum\limits_{k\ge 0}\frac{\partial \alpha_k}{\partial\xi_k}$
is similar.

\underline{Step 4:} 
Let us prove the integration by parts  \eqref{EP4.3} for a bounded function $\mathcal{C}^1$ with bounded derivative. The idea is to use Proposition \ref{P4}.
But since the matrix $\mathcal{C}(T)$ is only almost surely invertible, the regularity of the random maps  $\SA_k^K(\omega)$ of  equation \eqref{EP4.6}  for  $h$ and $\alpha$ given by given by \eqref{E45}  is not completely clear.
We thus introduce the regularised random variables $h^{\eps}:=\sum\limits_{k\geq 0}\alpha_k^{\eps}\dot{g}_{k,T}$ where $\eps>0$ and 
\begin{equation}
    \alpha^{\epsilon}:=-\A_T^{*} (\cC(T)+\eps Id)^{-1}\begin{pmatrix}
 1 & 0\\
 0 & R(-u)
\end{pmatrix}\begin{pmatrix}
v_0\\ v_1\\ v_2
\end{pmatrix}.
\end{equation}
 Since $\cC(T)+\eps I_d$ is  invertible for any entries $(z_0,\hdots,z_K)\in\mathbb{R}^{K+1}$
and since  all the entries of $(I_k(T))_{k\geq 0}$ and $\cC(T)$ are almost surely analytic in $(\xi_k)_{k\geq 0}$, 
the associated functions $\SA_k^{K,\eps}(\omega):\mathbb{R}^{K+1}\to\mathbb{R}$
are well defined and almost surely
 analytic in $(z_0,\hdots,z_K)\in\mathbb{R}^{K+1}$.

Now since $\|(\cC(T)+\eps I_d)^{-1}\| \leq \frac 1 \eps$, with similar computation as in Step 1, one gets that 
$h^\ve$ is bounded in $H$ and thus belongs to any $L^p(\Omega,H)$ for any $p\in [1,\infty]$.

Similarly, it is easy to obtain Steps 2 and 3 for $h^\ve$;
that is the well definiteness, the computation and  the $L^p$ convergence of 
\[
\mathfrak{d} h^\eps=\sum\limits_{k\ge 0}\left(\alpha_k^\eps\xi_k-\frac{\partial \alpha_k^\eps}{\partial\xi_k}\right).
\]
Note that $\mathfrak{d} h^\eps$ is obtained by a formula similar to the right hand side of \eqref{EDual} where each occurrence of $\cC(T)^{-1}$ is  replaced by $(\cC(T)+\eps I_d)^{-1}$.

Using Proposition \ref{P4}, we thus obtain  the integration by parts  formula \eqref{EP4.3} for $h^{\eps}$:
\begin{equation}
    \label{EP4.3bis}
    \E\left[\left(d_{X_T^x}f,\cD(X_T^{x})( h^\eps)\right)\right]=\E\left[f(X_T^x){\mathfrak{d}}
    h^\eps\right].
\end{equation}

Now since $\Vert (\cC(T)+\ve)^{-1}\Vert \leq \Vert \cC(T)^{-1}\Vert$  and since by Malliavin calculus the moments of $\cC(T)^{-1}$
are finite for any $p\geq 1$, by the dominated convergence, 
 $h^{\eps} \xrightarrow[\eps \to 0]{} h$ in $L^p(\Omega,H)$ and  
 $\mathfrak{d}h^{\eps} \xrightarrow[\eps \to 0]{} \mathfrak{d}h$ in $L^p(\Omega,\mathbb{R})$, for all $p\geq 1$. Letting $\ve \to 0$ gives that $h$ is in the domain of $\delta$; that is the integration by parts formula  \eqref{EP4.3} is valid for $h$.

Now since, 
\[
\left(d_x P_T f,v\right)=  \E\left[\left(d_{X_T^x}f,\cD(X_T^{x})( h)\right)\right],
\]
one deduces the Bismut-Elworthy-Li formula \eqref{IPP} when $f$  is a bounded $\cC ^1$ function with bounded derivative.

 {\underline{Step 5:}} Let us next prove that \eqref{IPP} is still true for any $f$ measurable and bounded.

For this, 
let $f_n\in \cC_c^\infty$ such that 
$f_n$ simply converges to $f$ and with $\Vert f_n \Vert_\infty \leq \Vert f \Vert_\infty $.

By the dominated convergence theorem, it is clear that for all $x\in \R\times \CC$:
\begin{equation}\label{Ef}
   P_t f_n(x) \xrightarrow[n\to \infty]{}P_t f(x). 
\end{equation}
 
From now, our goal is  to show that the differential of $P_t f_n$ converges uniformly on compact sets. More precisely, we  show just below that for any compact set $K$ of $\R\times \CC$:
\begin{equation}\label{Edf}
\sup_{x\in K} \sup_{\|v\|\leq 1}
\big| \left(d_x P_T f_n,v\right) - \E\left[ f(X_T^x) \delta h(x,v,T)\right] \big| \xrightarrow[n\to \infty]{} 0.
\end{equation}
     
Since $f_n$ is a bounded $\cC^1$ function with bounded derivative, the integration by parts formula \eqref{EP4.3} is valid for $f_n$, and thus using also Cauchy-Schwarz inequality, one has:
 \begin{align*}
     \big| \left(d_x P_T f_n,v\right) - \E\left[ f(X_T^x) \delta h(x,v,T)\right] \big|
     & = \big| \esp[f_n(X_t^x)\delta h(x,v,T)]-\esp[f(X_T^x)\delta h(x,v,T)]\big|\\
     &\leq \esp[|f_n(X_T^x)-f(X_T^x)|^2]^{\frac{1}{2}} \;  \esp[\delta h(x,v,T)^2]^{\frac{1}{2}}. \end{align*}

First, it is clear that by formula \eqref{EDual} and since the Malliavin matrix  $\cC^x(T)$ admit  negative moments of order 2 which are here {uniform in $x\in \R\times \CC$}.
\[
\sup_{x\in \R \times \CC} \sup_{\|v\| \leq 1} \esp[\delta h(x,v,T)^2]<+\infty.
\]

Now, let $K$ be a compact set of $\R\times \mathbb C$.
There exists $R_0>0$ such that 
$K\subset B_{R_0}:=\{ x=(u,z),|u|\leq R_0, |z|\leq R_0\}$.
Let $x=(u,z)\in K$ and $R\geq R_0+T$, one has
\begin{align*} 
\esp[|f_n(X_T^x)-f(X_T^x)|^2]&=\esp\left[|f_n(X_T^x)-f(X_T^x)|^2 \bone_{\{X_T^x\in B_R\}}\right]+ \esp\left[|f_n(X_T^x)-f(X_T^x)|^2 \bone_{\{X_T^x\notin B_R\}}\right].
\end{align*}
For the first term, we have: 
\begin{align*}
 \sup_{x\in K}\E \left[|f_n(X_T^x)-f(X_T^x)|^2 \bone_{\{X_T^x\in B_R\}}\right] &\leq \sup_{x\in K}\int_{B_R} \left| f(y)-f_n(y)\right|^2 p_T{(x,y)} \, dy\\
& \leq \int_{B_R} \left| f(y)-f_n(y)\right|^2 \sup_{x\in K,y\in B_R} p_T{(x,y)}  \, dy\\
 &\hspace{1cm} \xrightarrow[n\to \infty]{}0.
\end{align*}
Here we used  the dominated convergence theorem and the fact  that by Malliavin calculus, the semigroup at time $T$ admits a smooth (here continuous is enough) kernel $p_T$.
For the second term, we have:
\begin{align*}
 \sup_{x\in K} \esp\left[|f_n(X_T^x)-f(X_T^x)|^2 \bone_{\{X_T^x\notin B_R\}}\right]
& \leq 4 \|f\|_{\infty}^2  \sup_{x\in K}\pr\left(X_T^x\notin B_R\right)\\
&\leq 4 \|f\|_{\infty}^2\pr\left(|B_T|\geq R-R_0\right).
\end{align*}
Here we have used that $U_T^x = u+ B_T$ and that  $|Z_T^x|\leq |z| + T \leq R$  for $x\in K$.
As a consequence,
\[
\limsup_{n\to\infty}  \sup_{x\in K}
\esp[|f_n(X_T^x)-f(X_T^x)|^2]\leq 4 \|f\|_{\infty}^2\pr\left(|B_T|\geq R-R_0\right) \xrightarrow[R\to \infty]{}0.
\]
This ends the proof of \eqref{Edf}.

Now it is well known and classical that the conjunction of 
\eqref{Ef} and \eqref{Edf} implies that $P_t f$ is differentiable and satisfies:
\[
\left(d_x P_T f,v\right) = \E\left[ f(X_T^x) \delta h(x,v,T)\right]
\]
which proves that~\eqref{IPP} is valid for any bounded measurable functions~$f$.
\end{proof}

\section{Stochastic oscillating integrals}
\label{S3}

Our goal in this section is now to investigate the asymptotic  behaviour of the terms appearing in the matrix $\cC^x(T)$  and more generally into the dual  $\delta h(x,v,T)$.
As before, it is sufficient to consider the case $x=0$.
We shall  thus focus on  the stochastic integrals  appearing in the quantities $I_k(T)$ in \eqref{E40} and \eqref{E42}. 
 By a change of variable and the definition of $\tilde B_s$  in \eqref{E31bis}, one has
  \begin{equation}\label{E42bis}
  \sqrt T \int_0^T e^{iB_t} g\left(\frac{t}{T}\right)dt=  T^{3/2} \int_0^1 e^{i\sqrt T \tilde B_s} g(s)ds.
  \end{equation}
  In order to treat all the terms in the dual, we also have to consider  iterated integrals of the form:
  { \begin{equation}\label{E42ter}
  \left(\int_0^t  \left(\int_{r=0}^s \ell_2(r) e^{i\sqrt T \tilde B_r} dr\right)     \ell_1(s) \left(e^{i\sqrt T \tilde B_s}\right)^* ds\right)_{0\leq t\leq 1}.
  \end{equation}}

First we give an ergodic theorem (or a strong law) for these stochastic  oscillating integrals. It might be seen as a stochastic version of the Riemann Lebesgue theorem. 
\begin{theorem}\label{T:LFGN}
Let $(\tilde B_t)_{0 \leq t\le 1}$ be a standard  Brownian motion on $\R$. 
{Let $g \in  L^1([0,1], \R^K)$}. Then
\begin{equation}\label{E:LF}
\int_0^t g(s) e^{i\sqrt T \tilde B_s} ds \xrightarrow[T\to \infty]{ {a.s.} } 0 \quad\hbox{uniformly in $t\in[0,1]$}.
\end{equation}
\end{theorem}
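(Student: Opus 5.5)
The plan is to first reduce to a very simple class of functions $g$, then prove almost sure convergence along a sequence of times $T_n$, and finally pass to continuous $T$.

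\textbf{Reduction to indicator functions.} Work componentwise, so that $K=1$. For any $g,\tilde g\in L^1$ we have, uniformly in $t$ and $T$,
\[
\sup_t\Big|\int_0^t (g-\tilde g)(s)e^{i\sqrt T\tilde B_s}ds\Big|\le \|g-\tilde g\|_{L^1}.
\]
Step functions with rational breakpoints and rational values are countable and dense in $L^1$. Hence it suffices to prove the claim for $g=\bone_{[a,b]}$ with $a,b$ rational, intersecting countably many full-measure events. For such $g$ the quantity is $F_T(t)=\int_{a\wedge t}^{b\wedge t} e^{i\sqrt T\tilde B_s}ds$. It is enough to show that
\[
G_T(t):=\int_0^t e^{i\sqrt T\tilde B_s}ds\to 0 \quad\text{uniformly in } t\in[0,1], \text{ a.s.}
\]
Uniformity in $t$ then follows from the bound $|G_T(t)-G_T(t')|\le|t-t'|$ combined with convergence on a finite grid, or simply by proving a bound on $\sup_t$ directly.

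\textbf{Moment bound via Itô.} Apply Itô's formula to $e^{i\sqrt T\tilde B_s}$:
\[
e^{i\sqrt T\tilde B_t}-1 = i\sqrt T\int_0^t e^{i\sqrt T\tilde B_s}d\tilde B_s - \frac T2\int_0^t e^{i\sqrt T\tilde B_s}ds .
\]
This gives
\[
G_T(t)=\frac{2}{T}\Big(1-e^{i\sqrt T\tilde B_t}\Big)+\frac{2i}{\sqrt T}M_t, \qquad M_t=\int_0^t e^{i\sqrt T\tilde B_s}d\tilde B_s .
\]
By BDG, $\E[\sup_t|M_t|^p]\le C_p$ uniformly in $T$. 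Therefore $\E[\sup_t|G_T(t)|^p]\le C_p' T^{-p/2}$.

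\textbf{Almost sure convergence.} Along $T_n=n$, Markov's inequality with $p=4$ gives $\P(\sup_t|G_n|>\eps)\le C\eps^{-4}n^{-2}$, which is summable. Borel--Cantelli then yields almost sure convergence along the integers.

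\textbf{Main obstacle: interpolating between integers.} The process $T\mapsto G_T$ is not monotone, so I would control its oscillation on $[n,n+1]$. Since
\[
|e^{i\sqrt T x}-e^{i\sqrt{T'} x}|\le |\sqrt T-\sqrt{T'}|\,|x|,
\]
we get
\[
\sup_{T\in[n,n+1]}\sup_t|G_T(t)-G_n(t)|\le \frac{1}{2\sqrt n}\sup_s|\tilde B_s| ,
\]
which tends to $0$ almost surely. Combining this with the integer-sequence result finishes the argument.

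This interpolation step is the only genuinely delicate point, and the Lipschitz dependence of $e^{i\sqrt T x}$ on $\sqrt T$ handles it.
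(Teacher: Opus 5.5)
Your proof is correct and follows essentially the same route as the paper: reduce by $L^1$-density, use the It\^o identity to write $\int_0^t e^{i\sqrt T\tilde B_s}ds$ as $\frac{2i}{\sqrt T}$ times a martingale plus an $O(1/T)$ remainder, apply BDG and Borel--Cantelli along a discrete sequence, and interpolate using the Lipschitz dependence on $\sqrt T$. The differences are cosmetic. You reduce to step functions, so only $G_T$ needs the It\^o step and the $g'$ term of Proposition~\ref{P3.1} never appears, whereas the paper reduces to $\mathcal C^1$ functions. You also use fourth moments along $T_n=n$, where the paper uses eighth moments along $T_n=\sqrt n$.
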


 We now turn to the convergence in law of  the above integrals and iterated integrals. This convergence in law  is given in Theorem \ref{T1} below. It   may be seen as a central limit theorem for additive functional of the Brownian motion.  
 
 The idea to obtain Theorem \ref{T1} below  is to transform these additive functionals into martingales. Using  Itô formula, one can  replace,  up to negligible terms, the integration against 
 $\sqrt T e^{i\sqrt T \tilde B_s} ds $ by the stochastic integration against
 $2 i e^{i\sqrt T \tilde B_s} d\tilde B_s$; see Proposition \ref{P3.1} below for a precise statement.
Now, in the following theorem, which might be interesting in itself, we prove the convergence in law  for the resulting martingales. 
 
\begin{theorem}\label{T2}
Let $(\tilde B_t)_{t\geq 0}$ be a standard  Brownian motion on $\R$. 
Let $g \in \mathcal B_b([0,1], \R^K)$ and $\ell_1,\ell_2 \in \mathcal B_b([0,1], \R^L)$. Then 
\begin{align*}
& \begin{pmatrix}\tilde B_t\\ 
\int_0^t  g(s)e^{i\sqrt T \tilde B_s} d\tilde B_s \\
\int_{s=0}^t  \left(\int_{r=0}^s \ell_2(r) \left( e^{i\sqrt T \tilde B_r} \right)   d \tilde B_r\right) \,  \ell_1(s) \left(e^{i\sqrt T \tilde B_s}\right)^* d \tilde B_s
\end{pmatrix}_{0\le t\le 1}
\\
\xrightarrow[T\to \infty]{Law}
&
\begin{pmatrix}
\tilde B_t\\
\frac{1}{\sqrt 2}\int_0^t  g(s) d\SW_s\\
\frac 1 2 
\int_{s=0}^t  \left(\int_{r=0}^s \ell_2 (r) d\SW_r\right)     \ell_1(s) \left(  d\SW_s \right)^*
\end{pmatrix}_{ 0\le t\le 1}
\end{align*}
where $(\SW_t)_{t\geq 0}$ is a standard  Brownian motion in $\mathbb C$ independent of $ (\tilde B_t)_{t\geq 0}$.
\end{theorem}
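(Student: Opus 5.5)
We prove Theorem~\ref{T2} by a martingale functional central limit theorem. Set $M^T_t:=\int_0^t e^{i\sqrt T\tilde B_s}\,d\tilde B_s$, viewed as an $\R^2$-valued continuous martingale, and consider the vector $(\tilde B_t,\ M^T_t)$. We first treat the case where $g$ is a step function; the case $g\in\mathcal B_b$ then follows by $L^2$ approximation, using Itô's isometry, which is uniform in $T$ since $|e^{i\sqrt T\tilde B_s}|=1$. For step functions, the second component is a finite sum of increments of $M^T$ multiplied by constant vectors. Its convergence therefore reduces to convergence in law of $(\tilde B,M^T)$ in $C([0,1],\R^3)$ to $(\tilde B,\SW/\sqrt2)$, where $\SW$ is a complex Brownian motion independent of $\tilde B$.

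For the pair $(\tilde B,M^T)$ we compute the brackets. We have $\langle \tilde B\rangle_t=t$, and
\[
\langle \tilde B, \mathrm{Re}\,M^T\rangle_t=\int_0^t\cos(\sqrt T\tilde B_s)ds,\qquad \langle \tilde B, \mathrm{Im}\,M^T\rangle_t=\int_0^t\sin(\sqrt T\tilde B_s)ds .
\]
For the self-brackets of $M^T$, we use $\cos^2=\frac12(1+\cos 2\cdot)$, $\sin^2=\frac12(1-\cos2\cdot)$ and $\cos\sin=\frac12\sin2\cdot$. This gives $\langle \mathrm{Re}\,M^T\rangle_t=\frac t2+\frac12\int_0^t\cos(2\sqrt T\tilde B_s)ds$, and similar expressions for the other entries. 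By Theorem~\ref{T:LFGN} (applied with $\sqrt T$ and with $2\sqrt T$, and $g\equiv1$), all oscillating terms tend to $0$ almost surely, uniformly in $t$. Hence the bracket matrix converges uniformly on $[0,1]$ to $\mathrm{diag}(t,t/2,t/2)$. The theorem of convergence of continuous martingales with converging deterministic brackets (Rebolledo, or Jacod--Shiryaev, Thm.~VIII.3.11) then yields the joint convergence to $(\tilde B,\SW/\sqrt2)$ with $\SW$ independent of $\tilde B$. Tightness is automatic since the brackets are Lipschitz with constant $1$.

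For the iterated integral, we again reduce to step functions $\ell_1,\ell_2$ by isometry; the inner integral is bounded in $L^2$ uniformly in $T$. We then observe that the inner process $N^T_s:=\int_0^s\ell_2(r)e^{i\sqrt T\tilde B_r}d\tilde B_r$ is a continuous adapted process converging jointly with the integrator $M^T$. The outer integral is a stochastic integral $\int N^T_s\ell_1(s)\,d(M^T_s)^*$. To pass to the limit, we invoke the Kurtz--Protter / Jakubowski--Mémin--Pagès theorem on convergence of stochastic integrals. Its hypothesis is that the integrators $M^T$ be uniformly controlled (UT), which holds since $\langle M^T\rangle_1\le 1$. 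Since $(N^T,M^T,\tilde B)\to(\frac1{\sqrt2}\int\ell_2 d\SW,\SW/\sqrt2,\tilde B)$ jointly in law in $C$, the theorem gives $\int N^T\ell_1\,d(M^T)^*\to\frac12\int(\int\ell_2d\SW)\ell_1\,d\SW^*$ jointly with the other components. The factor $\frac12$ comes from $\frac1{\sqrt2}\cdot\frac1{\sqrt2}$. With $\ell_1$ a step function, the integrand $N^T\ell_1$ is càdlàg, so the theorem applies directly.

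The main obstacle is this last step: making the joint convergence of integrand and integrator rigorous and checking the UT condition, including the complex-conjugate structure $(e^{i\sqrt T\tilde B})^*$. If one prefers to avoid the Kurtz--Protter result, an alternative is to apply the martingale CLT directly to the enlarged family of martingales $\big(\tilde B,\ M^T,\ \int N^T\ell_1 dM^{T*}\big)$. In that case one must show that the cross brackets involving $N^T_s e^{\pm i\sqrt T\tilde B_s}$ and $N^T_se^{\pm2i\sqrt T\tilde B_s}$ vanish. This would follow from a version of Theorem~\ref{T:LFGN} with an adapted random $g=N^T$ that depends on $T$; obtaining it requires an additional Itô-formula argument, which is the technical core of this alternative route.
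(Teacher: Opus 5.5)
Your proposal is correct, and its first half is essentially the paper's first step. The paper also computes the bracket of $(\tilde B,\int g\,e^{i\sqrt T\tilde B}d\tilde B)$ and kills the oscillating entries with \eqref{E1/2} and Theorem~\ref{T:LFGN}. It does this directly for bounded measurable $g$, since that theorem holds for $L^1$ integrands, so your step-function reduction is harmless but unnecessary. It then concludes via Zheng's tightness criterion \cite{Zheng:85} rather than Jacod--Shiryaev VIII.3.11.

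For the iterated integral, however, the paper follows precisely what you call the alternative route. It writes out the full bracket of the enlarged martingale, including the blocks built from $J_{c,l},J_{s,l}$ and $\theta_1,\theta_2$. It shows that terms such as $\int_0^t M_s^2\,\tfrac12\sin(2\sqrt T\tilde B_s)\,ds$, with $M$ the inner martingale, vanish by an It\^o-formula computation in the spirit of Proposition~\ref{P3.1}. It checks Zheng's criterion with $p=2$ through uniform fourth-moment bounds on the inner integrals, and it identifies the limit from its (now random) limiting bracket.

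Your main route replaces all of this with the Kurtz--Protter / Jakubowski--M\'emin--Pag\`es theorem. Joint convergence of $(N^T,M^T,\tilde B)$ is already supplied by your first part, and the UT (or UCV) hypothesis is immediate from $\langle M^T\rangle_1\le 1$. You therefore avoid both the cross-bracket estimates with random, $T$-dependent integrands and the identification of a martingale from a random limiting bracket. The cost is invoking a heavier external limit theorem.

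One small simplification is available. Since each coordinate $\ell_1^j$ is a real scalar, you can write the outer integral as $\int N^{T}\,d(\tilde M^{T})^*$ with $\tilde M^T_t=\int_0^t\ell_1(s)e^{i\sqrt T\tilde B_s}\,d\tilde B_s$, whose joint convergence is again covered by your first part. The integrand is then continuous, so no step-function reduction for $\ell_1$ or $\ell_2$ is needed.
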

 
 The convergence in law in   Theorem \ref{T2} and in Theorem \ref{T1} below is with respect to the standard  uniform topology on 
 {$\mathcal C([0,1], \R^{1}\times \CC^K \times (\CC \otimes \CC^*)^L)$ where  $\CC \otimes \CC^* \simeq M_{2\times 1}(\R) \otimes M_{1\times 2}(\R)^* \simeq M_{2\times 2}(\R)$}.

We can now give the result for the additive functionals of the Brownian motion.
 
\begin{theorem}\label{T1}
Let $(\tilde B_t)_{t\geq 0}$ be a standard  Brownian motion on $\R$. 
Let $g \in \mathcal C^{1}([0,1], \R^K)$ and $\ell_1,\ell_2 \in \mathcal C^{1}([0,1], \R^L)$. Then 
\begin{equation}\label{ET1}
\begin{split}
& \begin{pmatrix}\tilde B_t\\ 
 \sqrt T \int_0^t  g(s)e^{i\sqrt T \tilde B_s} ds \\
 T \int_{s=0}^t  \left(\int_{r=0}^s \ell_2(r) e^{i\sqrt T \tilde B_r} dr\right)     \ell_1(s) \left(e^{i\sqrt T \tilde B_s}\right)^* ds
\end{pmatrix}_{0\le t\le 1}
\\
\xrightarrow[T\to \infty]{Law}
&
\begin{pmatrix}
\tilde B_t\\
 i\sqrt 2\int_0^t  g(s) d\SW_s\\
2 \int_{s=0}^t  \left(\int_{r=0}^s \ell_2(r)   \left( i d\SW_r \right)\right)     \ell_1(s) \left(  i\circ  d\SW_s \right)^*
\end{pmatrix}_{ 0\le t\le 1}
\end{split}
\end{equation}
where $(\SW_t)_{t\geq 0}$ is a standard  Brownian motion in $\mathbb C$ independent of $ (\tilde B_t)_{t\geq 0}$, and $\circ d\SW_s$ stands for Stratonovich integral.
\end{theorem}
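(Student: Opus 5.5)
We deduce Theorem~\ref{T1} from Theorem~\ref{T2} by converting the oscillating Lebesgue integrals into martingale integrals, as announced before Theorem~\ref{T2}. For $\phi(y)=e^{i\sqrt T y}$ we have $\phi''=-T\phi$, so It\^o's formula for $\frac{2}{\sqrt T}\, g(s)\, e^{i\sqrt T\tilde B_s}$ gives
\begin{equation*}
\sqrt T\int_0^t g(s)e^{i\sqrt T\tilde B_s}ds = 2i\int_0^t g(s)e^{i\sqrt T\tilde B_s}d\tilde B_s -\frac{2}{\sqrt T}\Big[g(s)e^{i\sqrt T\tilde B_s}\Big]_0^t+\frac{2}{\sqrt T}\int_0^t g'(s)e^{i\sqrt T\tilde B_s}ds .
\end{equation*}
Since $g\in\mathcal C^1$, the last two terms are bounded by $C/\sqrt T$ uniformly in $t$. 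Thus the second component equals $2i\int_0^t g\,e^{i\sqrt T\tilde B}d\tilde B+O(T^{-1/2})$ uniformly. By Theorem~\ref{T2} and Slutsky's lemma (in the uniform topology), it converges to $2i\cdot\frac1{\sqrt2}\int_0^t g\,d\SW=i\sqrt2\int_0^t g\,d\SW$, jointly with $\tilde B$.

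For the iterated term, set $M^T_s:=\sqrt T\int_0^s\ell_2(r)e^{i\sqrt T\tilde B_r}dr$. By the same identity with $g=\ell_2$, $M^T_s=2iN^T_s+\varepsilon^T_s$, where $N^T_s=\int_0^s\ell_2e^{i\sqrt T\tilde B}d\tilde B$ and $\sup_s|\varepsilon^T_s|\le C/\sqrt T$. The third component is $\int_0^t M^T_s\,\ell_1(s)\,\big(\sqrt T e^{i\sqrt T\tilde B_s}\big)^*ds$. We apply It\^o's formula a second time, to $\frac{2}{\sqrt T}M^T_s\ell_1(s)(e^{i\sqrt T\tilde B_s})^*$, where the conjugate exponential has $\phi''=-T\phi$ as well. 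The outcome is
\begin{equation*}
\int_0^t M^T_s\ell_1(s)\big(\sqrt Te^{i\sqrt T\tilde B_s}\big)^*ds = \int_0^t M^T_s\ell_1(s)\big(2i\,e^{i\sqrt T\tilde B_s}\big)^*d\tilde B_s \;-\;\frac{2}{\sqrt T}\int_0^t dM^T_s\,\ell_1(s)(e^{i\sqrt T\tilde B_s})^*\;+\;R^T_t .
\end{equation*}
Here $R^T_t$ collects the boundary term and the $\ell_1'$ term. Because $M^T$ is tight (it converges), $R^T$ is $O_P(T^{-1/2})$ uniformly in $t$.

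The crucial point, and the main obstacle, is the middle term. Since $dM^T_s=\sqrt T\ell_2(s)e^{i\sqrt T\tilde B_s}ds$, this term equals $-2\int_0^t\ell_2(s)\ell_1(s)\,e^{i\sqrt T\tilde B_s}(e^{i\sqrt T\tilde B_s})^*ds$. In $\CC\otimes\CC^*\simeq M_{2\times2}(\R)$, the product $e^{i\theta}(e^{i\theta})^*$ has a nonvanishing mean over the circle, namely $\frac12 I_2$. The remaining part oscillates at frequency $e^{2i\theta}$, and Theorem~\ref{T:LFGN} shows it tends to $0$ (applied with $2\sqrt T$ in place of $\sqrt T$, after writing the entries via $\cos^2,\sin^2,\cos\sin$). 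So the middle term converges a.s. to the deterministic drift $-\int_0^t\ell_2\ell_1\,ds\; I_2$, up to the identification of $\ell_2\ell_1$ as a tensor product.

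For the first term, we substitute $M^T=2iN^T+\varepsilon^T$. The resulting error is a stochastic integral whose integrand is $O(T^{-1/2})$, hence negligible in $L^2$ by Doob's inequality. What remains is $\int_0^t(2iN^T_s)\ell_1(s)(2i\,e^{i\sqrt T\tilde B_s})^*d\tilde B_s$. By Theorem~\ref{T2} this converges jointly to $\int_0^t(2i\cdot\frac{1}{\sqrt2}\int_0^s\ell_2 d\SW)\,\ell_1(s)\,(2i\cdot\frac1{\sqrt2}d\SW_s)^*=2\int_0^t\big(\int_0^s\ell_2\,i\,d\SW\big)\ell_1(s)(i\,d\SW_s)^*$, which is the It\^o version of the limit. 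Finally we check that the It\^o-to-Stratonovich correction $\frac12 d\langle\cdot,\cdot\rangle$ of $2\int(\int\ell_2\,i\,d\SW)\ell_1(i\circ d\SW)^*$ is exactly the drift found above. The components of $\SW$ are independent standard, so $d(i\SW)\,d(i\SW)^*=I_2\,ds$. The correction is therefore $2\cdot\frac12\ell_2\ell_1 I_2\,ds$, and matching it against $-\int\ell_2\ell_1 I_2\,ds$ fixes the orientation of the $*$ and the signs. Verifying this constant and sign carefully in the matrix identification is the delicate step.

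Joint convergence of all three components then follows from the joint statement of Theorem~\ref{T2}, together with Slutsky's lemma for the negligible and the deterministic-limit terms.
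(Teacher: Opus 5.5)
Your route is the same as the paper's: Proposition~\ref{P3.1} for the single oscillating integral, then a second It\^o step for the iterated one. The paper does this by applying Proposition~\ref{P3.1} to the $\mathcal C^1$ function $\ell_1(s)\int_0^s\ell_2(r)\sqrt T e^{i\sqrt T\tilde B_r}dr$. It then uses Theorem~\ref{T2} for the martingale part, Theorem~\ref{T:LFGN} with \eqref{E1/2} for the diagonal term, and Slutsky. However, the step you call delicate fails as written, because the middle term of your second It\^o formula has the wrong sign.

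Take $F_s=M^T_s\ell_1(s)$, which has finite variation and $F_0=0$, and $\psi_s=(e^{i\sqrt T\tilde B_s})^*$. Then $\sqrt T\psi_s\,ds=2(ie^{i\sqrt T\tilde B_s})^*d\tilde B_s-\frac{2}{\sqrt T}\,d\psi_s$, and
\[
-\frac{2}{\sqrt T}\int_0^tF_s\,d\psi_s=-\frac{2}{\sqrt T}F_t\psi_t+\frac{2}{\sqrt T}\int_0^t dF_s\,\psi_s .
\]
So the $dM^T$ contribution is $+2\int_0^t\ell_1\ell_2\,e^{i\sqrt T\tilde B_s}(e^{i\sqrt T\tilde B_s})^*ds$. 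This converges to $+\int_0^t\ell_1\ell_2\,ds\,I_2$, as in the paper's \eqref{E:3.18} and \eqref{E:g-strato}. With your minus sign, the limit would be the It\^o integral minus $\int_0^t\ell_1\ell_2\,ds\,I_2$, which is not the Stratonovich integral in \eqref{ET1}.

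Your remark that the matching ``fixes the orientation of the $*$ and the signs'' cannot repair this. Both $e^{i\theta}(e^{i\theta})^*$ (mean $\frac12 I_2$) and $d(i\SW)\,d(i\SW)^*=I_2\,ds$ are positive semidefinite under any convention. Both signs are therefore forced by the computation, so the mismatch you found signals an error, not a free choice. Once the sign is corrected, the drift equals the Stratonovich correction $2\cdot\frac12\,\ell_1\ell_2 I_2\,ds$ exactly.

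The rest of your argument is fine:
\begin{itemize}
\item tightness of $\sup_s|M^T_s|$ to control $R^T$;
\item the Doob bound for the $\varepsilon^T$ term;
\item continuous mapping for the rotation by $i$;
\item joint convergence from Theorem~\ref{T2}.
\end{itemize}
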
 
We chose to write Theorems \ref{T2} and \ref{T1} in this order since the 
 $\CC$-Brownian motion $(\SW_t)_{t\geq 0}$ in Theorem \ref{T1} may be taken to be the same as in Theorem \ref{T2}.

We now provide the 
proposition to transform additive functionals into martingales. 
\begin{proposition}
\label{P3.1}
Let $(\tilde B_t)_{t\geq 0}$ be a standard 1-dimensional Brownian motion and let $0\leq t \leq 1$. 

Let  $g$ be a $\cC^1$ function on $[0,1]$.
Then 
\begin{equation}\label{eq:sqrt-dt}
\sqrt T \int_0^t g(s) e^{i\sqrt T \tilde B_s} ds  = 2i \int_0^t g(s) e^{i\sqrt T \tilde B_s} d\tilde B_s + {R_{T,g}(t)}
\end{equation}
where the remainder term  $R_{T,g}(t)= \frac{-2g(t)}{\sqrt T} e^{i\sqrt T    \tilde B_t} + \frac{2g(0)}{ \sqrt T} e^{i\sqrt T \tilde B_0}  +  \frac{2}{\sqrt T} \int_0^t g'(s) e^{i\sqrt T \tilde B_s} ds $ satisfies:  
$$
\|R_{T,g}\|_\infty\le  \frac{1}{\sqrt T}\left(4\|g\|_\infty +2\|g'\|_\infty\right).
$$
\end{proposition}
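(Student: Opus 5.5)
The plan is to apply Itô's formula to the process $Y_s := g(s)\, e^{i\sqrt T \tilde B_s}$ on $[0,t]$ and solve for the $ds$-integral. Since $g$ is $\cC^1$ and $x\mapsto e^{i\sqrt T x}$ is smooth, Itô's formula (applied to real and imaginary parts, or directly to the complex-valued function) gives
\begin{equation*}
d\left(g(s) e^{i\sqrt T \tilde B_s}\right) = g'(s) e^{i\sqrt T \tilde B_s}\,ds + i\sqrt T\, g(s) e^{i\sqrt T \tilde B_s}\, d\tilde B_s - \frac{T}{2}\, g(s) e^{i\sqrt T \tilde B_s}\, ds .
\end{equation*}
Integrating from $0$ to $t$ yields
\begin{equation*}
\frac T2 \int_0^t g(s) e^{i\sqrt T \tilde B_s} ds = g(0)e^{i\sqrt T\tilde B_0} - g(t)e^{i\sqrt T \tilde B_t} + \int_0^t g'(s) e^{i\sqrt T\tilde B_s} ds + i\sqrt T \int_0^t g(s) e^{i\sqrt T \tilde B_s} d\tilde B_s .
\end{equation*}

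Multiplying by $2/\sqrt T$ produces exactly \eqref{eq:sqrt-dt}. The stochastic term becomes $2i\int_0^t g(s)e^{i\sqrt T\tilde B_s}d\tilde B_s$, and the three boundary and drift terms become $R_{T,g}(t)$ as written in the statement.

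The sup-norm bound follows from $|e^{i\sqrt T\tilde B_s}|=1$. The two boundary terms are each bounded by $2\|g\|_\infty/\sqrt T$. The integral term is bounded by $\frac{2}{\sqrt T}\int_0^t |g'(s)|ds \le 2\|g'\|_\infty/\sqrt T$, using $t\le 1$. Together these give $\|R_{T,g}\|_\infty\le (4\|g\|_\infty+2\|g'\|_\infty)/\sqrt T$, uniformly in $t\in[0,1]$ and pathwise.

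There is no real obstacle. The only points to check are that Itô's formula applies to a time-dependent $\cC^1$-in-time, $\cC^2$-in-space function, and that the stochastic integral is a genuine square-integrable martingale because its integrand is bounded.
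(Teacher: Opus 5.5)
Your proof is correct and is essentially the paper's argument. The paper applies Itô's formula to $\psi_T(\tilde B_s)$ with $\psi_T(x)=-\frac{2}{T}e^{i\sqrt T x}$ (so that $\frac12\psi_T''(x)=e^{i\sqrt T x}$) and then integrates by parts against $g$, which is the same computation as your Itô product rule for $g(s)e^{i\sqrt T\tilde B_s}$, and your identification of $R_{T,g}$ and its bound $\frac{1}{\sqrt T}(4\|g\|_\infty+2\|g'\|_\infty)$ match the statement exactly.
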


 \begin{proof}[Proof of Proposition \ref{P3.1}]
Let $\psi_T$ be the function  $\psi_T(x)= \frac{-2}{T} e^{i\sqrt T x}$. It satisfies
\[
\psi_T'(x)= \frac{-2i}{\sqrt T} e^{i\sqrt T x},  \;  \psi_T''(x)= 2 e^{i\sqrt T x}
\]
and thus, by It\^o formula:

\begin{align*}
&\int_0^t g(s) e^{i\sqrt T \tilde B_s} ds=\frac{1} {2} \int_0^t g(s) \psi_T''( \tilde B_s) ds\\
   &= \int_0^t g(s) \Big( d\big(\psi_T( \tilde B_s)\big) - \psi_T'( \tilde B_s) d\tilde B_s \Big)\\
   &=  g(t) \psi_T( \tilde B_t) - g(0) \psi_T( \tilde B_0)  - \int_0^t g'(s) \psi_T( \tilde B_s) ds    -  \int_0^t g(s) \psi_T'( \tilde B_s) d\tilde B_s\\
    &= \frac{-2g(t)}{T} e^{i\sqrt T  \tilde B_t} + \frac{2g(0)}{T} e^{i\sqrt T  \tilde B_0}  +  \frac{2}{T} \int_0^t g'(s) e^{i\sqrt T \tilde B_s} ds     + \frac{2i}{\sqrt T}\int_0^t g(s) e^{i\sqrt T \tilde B_s} d\tilde B_s.
\end{align*}
This immediately yields~\eqref{eq:sqrt-dt}. The estimate of the remainder term $R_{T,g}$ is immediate.
\end{proof}

We now provide the proof of  the strong law for the oscillating integrals.\footnote{This nice and simple proof was communicated to us by one of the referees. We wish to thank her/him warmly. In an earlier version of the paper, we used a proof based on the occupation time formula for the Brownian motion.}
\begin{proof}[Proof of Theorem \ref{T:LFGN}]
Working coordinate-wise, it is enough to consider the case $K=1$. 
As for the standard Riemann-Lebesgue Lemma, by density of $\cC^1([0,1],\R)$ in $L^1([0,1],\R)$, it is enough to  treat  only the case where $g\in \cC^1([0,1],\R)$. Using Proposition \ref{P3.1}, one has for all $0\leq t\leq 1$,
\begin{align*}
I_{g,T}(t):&=\int_0^t g(s) e^{i\sqrt T \tilde B_s} ds\\
&= \frac{2i}{\sqrt T } \int_0^t g(s) e^{i\sqrt T \tilde B_s} d\tilde B_s + O\left( \frac{1} {T} \right)
\end{align*}
where the remainder term $O\left( \frac{1} {T} \right)$ is uniform in $t\in[0,1]$.
By the Burkholder-Davis-Gundy inequality, one has for $T\geq 1$:
\[
\E\left[ \sup_{t\in [0,1]}  |I_{g,T}|^8 \right] \leq \frac{C}{T^4}.  
\]
By Borell-Cantelli, this ensures the  desired uniform convergence for the subsequence
$(\sqrt n) _{n\geq 1}$:
\[
I_{g,\sqrt n} \xrightarrow[n\to \infty]{\Vert \cdot \Vert_\infty,\,  a.s.} 0.
\]
To conclude for a generic $T$, one uses that for $\sqrt n \leq  T <\sqrt{n+ 
1}$ and since the complex exponential is 1-Lipschitz,
\[
\Vert I_{g,T} -I_{g,\sqrt n} \Vert_\infty \leq |T-\sqrt n| \int_0^1 |g(s)| |\tilde B_s| ds 
\xrightarrow[n\to \infty]{ a.s.} 0.
\]
\end{proof}

\color{black}

Complementary to Proposition~\ref{P3.1}, the next lemma yields uniform bounds for the even moments of~\eqref{eq:sqrt-dt}.

\begin{lemma}
\label{L3.2}
Let $k\geq 1$. For any $\lambda >0$ {, $p\in\mathbb N$}  and any   bounded measurable deterministic function $g$ on $[0,1]^k$ with value in $\CC$, we have
\begin{equation}
    \label{E3.2bis}
     \E\left[\left|\int_{[0,1]^k} g(s) \prod_{j=1}^k e^{i\lambda \tilde B_{s_j}} ds_1 \dots ds_k \right|^{2p} \right]\le
\|g\|_\infty^{2p}\; \E\left[\left|\int_{[0,1]^k}  \prod_{j=1}^k e^{i\lambda \tilde B_{s_j}} ds_1 \dots ds_k \right|^{2p} \right].
\end{equation}
In particular, 
for all  $p\in \N$, there exists a constant  $C_p>0$ such that for all {$T>0$} and all bounded measurable functions $g$ on $[0,1]^k$ with value in $\CC$, 
\begin{equation}
    \label{E3.2}
   \E\left[\left| T^{\frac{k}{2}}   \int_{[0,1]^k} g(s) \prod_{j=1}^k e^{i\sqrt T \tilde B_{s_j}} ds_1 \dots ds_k \right|^{2p} \right] \le C_p\|g\|_\infty^{2p}.
\end{equation}
\end{lemma}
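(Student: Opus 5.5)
The plan is to expand the $2p$-th power as a multiple integral and use that the only random quantity is the characteristic function of a Gaussian vector, which is nonnegative.

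\textbf{Step 1: expansion.} Write $|Y|^{2p}=Y^p\bar Y^p$ with
\[
Y=\int_{[0,1]^k} g(s)\prod_{j} e^{i\lambda\tilde B_{s_j}}ds .
\]
By Fubini (everything is bounded), $\E[|Y|^{2p}]$ equals an integral over $([0,1]^k)^{2p}$ of
\[
\prod_{m=1}^p g(s^{(m)})\overline{g(s^{(p+m)})}
\]
times
\[
\E\Big[\exp\Big(i\lambda\Big(\sum_{m\le p}\sum_j\tilde B_{s^{(m)}_j}-\sum_{m> p}\sum_j\tilde B_{s^{(m)}_j}\Big)\Big)\Big].
\]

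\textbf{Step 2: positivity of the kernel.} The expression inside the exponential is $i\lambda G$, with $G$ a centered real Gaussian variable (a linear combination of values of $\tilde B$). Hence the expectation equals $\exp(-\lambda^2\E[G^2]/2)$, which is a positive real number. Bounding each $|g|$ by $\|g\|_\infty$ therefore gives
\[
\E[|Y|^{2p}]\le \|g\|_\infty^{2p}\int \E\big[e^{i\lambda G}\big],
\]
and the right-hand side is exactly the same expansion with $g\equiv1$. This proves \eqref{E3.2bis}.

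\textbf{Step 3: the case $g\equiv1$.} With $g\equiv 1$ the integrand factorizes:
\[
\int_{[0,1]^k}\prod_j e^{i\lambda\tilde B_{s_j}}ds=\Big(\int_0^1 e^{i\lambda \tilde B_s}ds\Big)^k .
\]
Hence the right-hand side of \eqref{E3.2bis} is
\[
\E\Big[\Big|\int_0^1 e^{i\sqrt T\tilde B_s}ds\Big|^{2pk}\Big].
\]
By Proposition~\ref{P3.1} with $g\equiv1$,
\[
\sqrt T\int_0^1 e^{i\sqrt T\tilde B_s}ds=2i\int_0^1 e^{i\sqrt T\tilde B_s}d\tilde B_s+R_{T,1}(1),\qquad |R_{T,1}(1)|\le 4/\sqrt T .
\]
The Burkholder--Davis--Gundy inequality bounds the $2pk$-th moment of the martingale term by a constant, since its quadratic variation is at most $1$. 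Thus $T^{pk}$ times the moment is bounded by $C_p$ for $T\ge1$. For $T<1$ I would use the trivial bound $|\int_0^1 e^{i\sqrt T\tilde B_s}ds|\le1$ together with $T^{pk}\le1$. This gives \eqref{E3.2}, where $C_p$ may depend on $k$ as well.

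\textbf{Main obstacle.} The key step is Step 2: the positivity of $\E[e^{i\lambda G}]$ is what allows $g$ to be removed. Care is needed to pair the conjugated factors correctly so that a real Gaussian variable appears. After that the estimate is routine.
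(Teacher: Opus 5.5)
Your proof is correct and follows the paper's route: expand the $2p$-th moment, use that $\E[e^{i\lambda G}]>0$ for a centered real Gaussian $G$ to remove $g$, then apply the It\^o decomposition of Proposition~\ref{P3.1} and a moment bound on the martingale term. Two of your steps are slightly more careful than the paper. You reduce general $k$ explicitly to the $2pk$-th moment of $\bigl(\int_0^1 e^{i\lambda\tilde B_s}ds\bigr)^k$, whereas the paper only treats $k=1$. You also give a separate trivial bound for $T<1$, which the paper needs but omits: its displayed chain contains a term of order $\lambda^{-2p}$ and is therefore uniform only for $\lambda=\sqrt T$ bounded below.
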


\begin{proof}
For simplicity, the proof is detailed only for $k=1$. 
The method   for $k\geq 2$ is similar  { replacing  all the one-dimensional integrals below on $[0,1]$ by multiple integrals on $[0,1]^k$. }
We compute 

\begin{align*}
    &\E\left[\left(\left|\int_0^1g(s)e^{i\lambda \tilde B_s}ds\right|^2\right)^p\right]= \E\left[
    \left(\int_{[0,1]^2} ds_1ds_2\; g(s_1)g(s_2) e^{i\lambda (\tilde B_{s_2}-\tilde B_{s_1})}
    \right)^p
    \right]\\
    &=\int_{[0,1]^{2p}}ds_1\ldots ds_{2p}\;g(s_1)\ldots g(s_{2p})\E\left[e^{i\lambda \sum_{k=1}^{2p}(-1)^k\tilde B_{s_k}}\right].
\end{align*}
Remarking that $\lambda \sum_{k=1}^{2p}(-1)^k\tilde B_{s_k}$ is a centered Gaussian random variable, we get that $\E\left[e^{i\lambda \sum_{k=1}^{2p}(-1)^k\tilde B_{s_k}}\right]$ is a positive number. Consequently, the last term can be bounded above by 
\[
\|g\|_\infty^{2p}\int_{[0,1]^{2p}}ds_1\ldots ds_{2p}\E\left[e^{i\lambda \sum_{k=1}^{2p}(-1)^k\tilde B_{s_k}}\right].
\]
But with the same calculation we get that this term is equal to
\[
\|g\|_\infty^{2p}\E\left[\left(\left|\int_0^1e^{i\lambda \tilde B_s}ds\right|^2\right)^p\right],
\]
yielding~\eqref{E3.2bis}.
Now, as in the proof  of Proposition~\ref{P3.1}, one has
\[
\int_0^1e^{i\lambda \tilde B_s}ds = 
\frac2{\lambda^2}\left(1-e^{i\lambda {\tilde B_1}
}\right)
+\frac{2i}\lambda \int_0^1e^{i\lambda \tilde B_s}d\tilde B_s
.
\]
As a consequence,  we get 
\begin{align*}
    \E\left[\left(\left|\lambda\int_0^1g(s)e^{i\lambda \tilde B_s}ds\right|^2\right)^p\right]&\le 2^{{p}}\lambda^{2p}\|g\|_\infty^{2p}
    \E\left[\left(
\frac{16}{\lambda^4}
+\frac{4}{\lambda^2} \left|\int_0^1e^{i\lambda \tilde B_s}d\tilde B_s
\right|^2\right)^p\right]\\
&\le 2^{{2p-1}}\lambda^{2p}\|g\|_\infty^{2p}\left(
\left(\frac{16}{\lambda^4}\right)^p+\frac{2^{2p}}{\lambda^{2p}}
\E\left[\left|\int_0^1e^{i\lambda \tilde B_s}d\tilde B_s
\right|^{2p}\right]\right)\\
&\le 2^{{2p-1}}\lambda^{2p}\|g\|_\infty^{2p}\left(
\left(\frac{16}{\lambda^4}\right)^p+\frac{2^{3p}}{\lambda^{2p}}
\E\left[\left|\beta_1
\right|^{2p}\right]
\right)\\
&\le C_p\|g\|_\infty^{2p}
\end{align*}
{where $\beta$ is a standard Brownian motion on $\mathbb{R}$.}
\end{proof}

{We can now give the proof of Theorem \ref{T2}.}

{To simplify the proof and the computation of the quadratic variations, in the proof we identify 
$\R^{1}\times \CC^K \times (\CC \otimes \CC^*)^L$ with $\R^{1+2K+4L}$.
Here we recall that $\CC\otimes\CC^* \simeq M_{2\times2}(\R)$ and we choose to identify 
 a real 2 by 2 matrix $M$ with  $(M_{1,1}, M_{1,2}, M_{2,1}, M_{2,2})^* \in \R^4$.}

\begin{proof}[Proof of Theorem \ref{T2}]

We will proceed in two steps: first we will consider only the vector valued martingale
\begin{equation}\label{E:Mg}
    (M_t^g(T))_{0\le t\le 1}:= \begin{pmatrix}\tilde B_t\\
\int_0^t g(s) e^{i\sqrt T \tilde B_s} d\tilde B_s
\end{pmatrix}_{0\le t\le 1} .
\end{equation}

Our goal is to show 
that the law of this martingale
converges to  (the law of)

\begin{equation}\label{E:Wg} 
\begin{pmatrix}
\tilde B_t\\
\frac{1}{\sqrt 2}  \int_0^t g(s) d\SW_s
\end{pmatrix}_{{0\le t\le 1}}.
\end{equation}
where $(\SW_t)_{t\geq 0}$ is a standard  Brownian motion in $\mathbb C$ independent of $ (\tilde B_t)_{t\geq 0}$.
For this, we will prove that the quadratic variation converges in law 
to the quadratic variation of the limit with respect to the standard uniform topology in $\mathcal C([0,T],  M_{d\times d}(\R))$ with $d=2K+1$.
We will in fact, using Theorem \ref{T:LFGN}, prove an almost sure convergence for the quadratic variation in this first step.
We will then use a simple criterion based on some uniform integrability of the quadratic variation  from Zheng \cite{Zheng:85} to prove that the martingale converges and  to identify the limit.

Let us denote by $\int_0^tu_s^g(T)ds$  the quadratic variation of the martingale $M_t^g$. The   $2K+1$ square symmetric  matrix $u_s^g$ is thus given by 
 
\begin{equation}\label{Eug}
u_s^g(T)= \begin{pmatrix}
 1& U_1^* & \dots & U_K^*\\
 U_1 & U_{1,1} & \dots & U_{1,K}\\
 \vdots & \vdots &   & \vdots\\
 U_K & U_{K,1} & \dots & U_{K,K}
\end{pmatrix}
\end{equation}
where for $1\leq k  \leq K $,
 \[
 U_k=  g_k(s) \begin{pmatrix}
  \cos(\sqrt T \tilde B_s) \\  \sin(\sqrt T \tilde B_s)
 \end{pmatrix}  
 \]
and for $1\leq  j,k \leq K$,
\[
 U_{j,k}= g_j(s) g_k(s) 
 \begin{pmatrix}
   \cos(\sqrt T \tilde B_s)^2 &  \cos(\sqrt T \tilde B_s)  \sin(\sqrt T \tilde B_s)\\
 \cos(\sqrt T \tilde B_s)  \sin(\sqrt T \tilde B_s)& \sin(\sqrt T \tilde B_s)^2
 \end{pmatrix}.
 \]

Using
\begin{align}
\nonumber & \begin{pmatrix}
   \cos(\sqrt T \tilde B_s)^2 &  \cos(\sqrt T \tilde B_s)  \sin(\sqrt T \tilde B_s)\\
 \cos(\sqrt T \tilde B_s)  \sin(\sqrt T \tilde B_s)& \sin(\sqrt T \tilde B_s)^2
 \end{pmatrix}\\
 = &\begin{pmatrix}
  \frac{1}{2} &0 \\
  0 & \frac{1}{2}
 \end{pmatrix}
+\begin{pmatrix}
\frac12\cos(2\sqrt{T} \tilde B_s)&\frac12\sin(2\sqrt{T} \tilde B_s)\\
\frac12\sin(2\sqrt{T} \tilde B_s)&-\frac12\cos(2\sqrt{T} \tilde B_s)
\end{pmatrix}
\label{E1/2}
\end{align}
and Theorem \ref{T:LFGN},
we obtain the following  almost sure convergence  of the quadratic variation of $M_t^g$:
\begin{equation}
    \label{E3.5bis}
    \sup_{t\in[0,1]}\left\|\int_0^tu_s^g(T)ds- A(t)\right\|\xrightarrow[T\to \infty]{a.s.} 0
\end{equation}
where $A(t)$ the $2K+1$ deterministic matrix  given by:
\begin{equation}\label{E:MatA}
A(t)= \begin{pmatrix}
 t & 0^* & \dots & 0^*\\
 0 & A_{1,1} & \dots & A_{1,K}\\
 \vdots & \vdots &   & \vdots\\
 0 & A_{K,1} & \dots & A_{K,K}
\end{pmatrix}
\end{equation}

where for $1\leq  j,k \leq K$,
\[
 A_{j,k}=  
 \begin{pmatrix}
  \frac 1 2 \int_0^t g_j(s) g_k(s) ds  &  0\\
 0 & \frac 1 2 \int_0^t g_j(s) g_k(s) ds 
 \end{pmatrix}.
 \]
 We recognize that 
$(A(t))_{t\in [0,1]}$ is the quadratic variation of the   martingale in~\eqref{E:Wg}.
{In particular, in view of~\eqref{E3.5bis},
the martingale in~\eqref{E:Wg} is the only possible limit in law for the family of  martingales~\eqref{E:Mg}.}

To show the existence of the limit (in law) of the martingale \eqref{E:Mg} as $T\to +\infty$ and the identification of the limit as (the law) of the martingale \eqref{E:Wg}, 
{it is thus enough to show that the family of martingales are tight for the topology of the uniform convergence on $\cC([0,1],\R^{1+2K})$.
}
According to Theorem~3 in \cite{Zheng:85}, 
it is enough to check that there exists $p>1$ such that the random variables   $\int_0^1\|u_s(T)\|^pds$ are {tight in $\R$, also called uniformly bounded in probability in \cite{Zheng:85}}; that is 
 \begin{equation} \label{E:UBP}
 \sup_{T\geq 1} \PP \left( \int_0^1\|u_s^g(T)\|^pds \geq M\right)  \xrightarrow[M\to \infty]{} 0.
 \end{equation}

The notation $\|u_s^g(T)\|(\omega)$ is for the norm or the  matrix $u_s^g(T)(\omega)$.
But here all entries of $u_s^g(T)(\omega)$ are uniformly bounded in $T$   and $\omega$.

This ends the first step of the proof.

In a second step, we add the iterated integrals and consider the vector valued martingale: 
\begin{equation} \label{E:Mgh}
 (M_t^{g,\ell}(T))_{0\le t\le 1}:=\begin{pmatrix}\tilde B_t\\ 
\int_0^t  g(s)e^{i\sqrt T \tilde B_s} d\tilde B_s \\
\int_{s=0}^t  \left(\int_{r=0}^s \ell_2(r) \left( e^{i\sqrt T \tilde B_r} \right)   d\tilde B_r\right) \,  \ell_1(s) \left(e^{i\sqrt T \tilde B_s}\right)^* d \tilde B_s
\end{pmatrix}_{0\le t\le 1}
.\end{equation}

The method of the proof is similar as in the first step. First we show that the quadratic variation converges in law. This will follow mainly from the first step of the proof.
Then we check the above uniform integrability of the quadratic variation to prove that the martingale converges and  to identify its  limit.

Let us denote by $\int_0^tu_s^{g,\ell}(T)ds$ the  quadratic variation of the martingale \eqref{E:Mgh}. It is given by the $1+2K +4L$  matrix:
\[
u_s^{g,\ell}(T)= \begin{pmatrix}
u_s^g(T) &  {v_s^{g,\ell}(T)}^*  \\
v_s^{g,\ell}(T)& w_s^{\ell,\ell}(T)
\end{pmatrix}
\]
where $u_s^g(T)$ is given by \eqref{Eug} and 
\[
w_s^{\ell,\ell}(T)=
\begin{pmatrix}
 W_{1,1}& \dots & W_{1,L}\\
 \vdots &  & \vdots\\
 W_{L,1}& \dots & W_{L,L}
\end{pmatrix}
\]
where for $1\leq j,l \leq L$
\[
W_{j,l}= 
  \ell_1^j \ell_1^l  \begin{pmatrix} 
J_{c,j} J_{c,l}   \theta_{1}^2  &  J_{c,j} J_{c,l}  \theta_{1} \theta_{2} & J_{c,j} J_{s,l}   \theta_{1}^2 & J_{c,j} J_{s,l} \theta_{1} \theta_{2} \\
J_{c,j} J_{c,l}  \theta_{1} \theta_{2}  &  J_{c,j} J_{c,l}  \theta_{2}^2  &  J_{c,j} J_{s,l}  \theta_{1} \theta_{2}& J_{c,j} J_{s,l}  \theta_{2}^2  \\
J_{s,j} J_{c,l}  \theta_{1}^2  & J_{s,j} J_{c,l}  \theta_{1} \theta_{2} & J_{s,j} J_{s,l}  \theta_{1}^2  & J_{s,j} J_{s,l}  \theta_{1} \theta_{2} \\
J_{s,j} J_{c,l}  \theta_{1} \theta_2  & J_{s,j} J_{c,l}  \theta_{2}^2  & J_{s,j} J_{s,l}  \theta_{1} \theta_2  & J_{s,j} J_{s,l} \theta_{2}^2  
                 \end{pmatrix}
\]
with for $1\leq l \leq L$
\[
J_{c,l}(s)= \int_{r=0}^s \ell_2^l(r)  \cos({\sqrt T \tilde B_r}) d \tilde B_r, \,
J_{s,l}(s)= \int_{r=0}^s \ell_2^l(r)  \sin({\sqrt T \tilde B_r}) d \tilde B_r 
\]
and 
\[
 \theta_{1}(s)=  \cos({\sqrt T \tilde B_s}), \, \theta_{2}(s)=  \sin({\sqrt T \tilde B_s});
 \]
 and where 
 \[
v_s^{g,\ell}(T) =
\begin{pmatrix}
 V_1\\
 \vdots\\
 V_L
\end{pmatrix}
\]
where for $1\leq l \leq L$ 
\[
V_l= \ell_1 ^l \begin{pmatrix}
 J_{c,l} \theta_1 &  J_{c,l}  g^1 \theta_1^2 & J_{c,l}  g^1 \theta_1 \theta_2 & \dots  &  J_{c,l}  g^K \theta_1^2 & J_{c,l}  g^K \theta_1 \theta_2\\
 J_{c,l} \theta_2 &  J_{c,l}  g^1 \theta_1 \theta_2 & J_{c,l}  g^1  \theta_2^2 & \dots  &  J_{c,l}  g^K \theta_1 \theta_2 & J_{c,l}  g^K  \theta_2^2\\
  J_{s,l} \theta_1 &  J_{s,l}  g^1 \theta_1^2 & J_{s,l}  g^1 \theta_1 \theta_2 & \dots  &  J_{s,l}  g^K \theta_1^2 & J_{s,l}  g^K \theta_1 \theta_2\\
   J_{s,l} \theta_2 &  J_{s,l}  g^1 \theta_1 \theta_2 & J_{s,l}  g^1  \theta_2^2 & \dots  &  J_{s,l}  g^K \theta_1 \theta_2 & J_{s,l}  g^K  \theta_2^2\\
 \end{pmatrix}.
\]

By the first part of the proof,  we deduce that there exists $(\SW_t)_{0\leq t \leq 1}$ a Brownian motion independent of $(\tilde B_t)_{0\leq t \leq 1}$ such that 
\[
\begin{pmatrix}
 \tilde B_t \\
 \int_0^t g(s) e^{i\sqrt T \tilde B_s} d\tilde B_s\\
 \int_0^t \ell_2(s) e^{i\sqrt T \tilde B_s} d\tilde B_s\\
\end{pmatrix}_{0\leq t\leq 1}
\xrightarrow[T\to \infty]{Law}
\begin{pmatrix}
 \tilde B_t \\
 \frac{1} { \sqrt 2 }\int_0^t g(s) d\SW_s\\
 \frac{1} { \sqrt 2 }\int_0^t \ell_2(s) d\SW_s
\end{pmatrix}_{0\leq t\leq 1}.
\]

Using again \eqref{E1/2},
we prove that the quadratic variation of 
$\int_0^t u_s^{g,\ell}(T) ds$ of $M^{g,\ell}$ converges in law to:
\[
\int_0^t u_s^{g,\ell}(T) ds \xrightarrow[T\to\infty]{Law}  \int_0^t
\begin{pmatrix}
 A(s)  &  B(s)^*  \\
B(s) & C(s)
\end{pmatrix} ds
\]

\[
B(s) =
\begin{pmatrix}
 B_1\\
 \vdots\\
 B_L
\end{pmatrix}
\]
where for $1\leq l \leq L$ 
\[
B_l= \frac{1}{2\sqrt 2} \ell_1^l(s) \begin{pmatrix}
 0 &  I_{1,l}  g^1  Id_2 &   \dots  &   I_{1,l}  g^K  Id_2\\
 0 &  I_{2,l}  g^1  Id_2 &   \dots  &   I_{2,l}  g^K  Id_2
 \end{pmatrix},
\]

\[
C(s)=
\begin{pmatrix}
 C_{1,1}& \dots & C_{1,L}\\
 \vdots &  & \vdots\\
 C_{L,1}& \dots & C_{L,L}
 \end{pmatrix},
C_{j,l}= 
 \frac{1}{4} \ell_1^j(s) \ell_1^l(s)  \begin{pmatrix} 
 (I_{1,j} I_{1,l}) Id_2  &  (I_{1,j} I_{2,l}) Id_2\\
 (I_{2,j} I_{1,l}) Id_2  &  (I_{2,j} I_{2,l}) Id_2                  \end{pmatrix}
\]
with 
\[
I_{1,j}= \int_0^s \ell_2^j (r) d\SW_r^1,\; I_{2,j}= \int_0^s \ell_2^j (r) d\SW_r^2.
\]

{To be complete and deal with remainder terms,  we must show for example that 
\[
\E\left[ \left(\int_0^t \Big(\int_0^s \ell_2(r) \cos(\sqrt T \tilde B_r ) d\tilde B_r\Big)^2 \; \frac 1 2 \sin (2\sqrt T \tilde B_s)     ds  \right)^2\right] \xrightarrow[T\to \infty]{}0.
\]
}
 This is direct using Itô formula as in Proposition \ref{P3.1}, replacing $g$ by the martingale $\displaystyle M_s=\int_0^s \ell_2(r) \cos(\sqrt T \tilde B_r ) d\tilde B_r$:
\begin{align*}
\int_0^tM_s^2\frac{\sin(2\sqrt{T}\tilde B_s)}{2}ds&=-M_t^2\frac{\sin(2\sqrt{T}\tilde B_t)}{4T}+\int_0^t M_s\frac{\sin(2\sqrt{T}\tilde B_s)}{2T}dM_s\\
&+\int_0^t\frac{\sin(2\sqrt{T}\tilde B_s)}{4T}(dM_s,dM_s)+\int_0^t M_s^2\frac{\cos(2\sqrt{T}\tilde B_s)}{2\sqrt{T}}d\tilde B_s\\
&+\int_0^t\frac{\cos(2\sqrt{T}\tilde B_s)}{\sqrt T}M_s(dM_s,d\tilde B_s).
\end{align*}

 The limit of the quadratic variation corresponds to the quadratic variation of the vector valued martingale:
 \begin{equation}\label{E:Wgh}
\begin{pmatrix}
\tilde B_t\\
\frac{1}{\sqrt 2}\int_0^t  g(s) d\SW_s\\
\frac{1}{2}
\int_{s=0}^t  \left(\int_{r=0}^s \ell_2(r) d\SW_r\right)     \ell_1(s) \left(  d\SW_s \right)^*
\end{pmatrix}_{ 0\le t\le 1}.
\end{equation}

We now turn to the uniform integrability condition \eqref{E:UBP}. We take $p=2$ 
and show the stronger fact that all the entries of the matrix $u_s^{g,\ell}(T)$ are uniformly bounded in $L^2(\Omega)$ for  $T\geq 1$ and $s\in[0,1]$. 
This  is clear for the entries of $v_s^{g,\ell}$ using Itô isometry.
 This is  also the case for the entries of $w_s^{\ell,\ell}$  since for example 
\[
\E\left[  \left(\int_0^1 \ell_2 (r) \cos(\sqrt T \tilde B_r) d\tilde B_r\right)^4\right]
\leq E[G^4]
\]
for $G$ a centered gaussian variable with variance $\Vert \ell_2 \Vert_\infty^2$.
Similarly  as in the first step, this gives the existence of the limit in law of the martingale \eqref{E:Mgh} and its identification as the law of  the martingale \eqref{E:Wgh}.
\end{proof}

We now turn to the proof of Theorem \ref{T1}.
\begin{proof}[Proof of Theorem \ref{T1}]
First using Proposition \ref{P3.1}, we have
\[
\sqrt T \int_0^t g(s) e^{i\sqrt T \tilde  B_s} ds  = 2i \int_0^t g(s) e^{i\sqrt T \tilde B_s} d \tilde B_s + R_{T,g}(t)
\]
with $R_{T,g}$ converging a.s. to 0 uniformly on $[0,1]$. 
Using also Proposition \ref{P3.1} with the $\mathcal C^1$ function: $\ell(s)=\ell_1(s) \int_{r=0}^s \ell_2(r) \sqrt T e^{i\sqrt T \tilde B_r} dr$, we have:
\begin{align}
 \nonumber J_T(t):=& \int_{s=0}^t \ell(s) \cdot \left(\sqrt T e^{i\sqrt T \tilde B_s}\right)^* ds\\
\nonumber =& - \frac{2}{T}  \left( \ell_1(t) \int_{r=0}^t \ell_2(r) \sqrt T e^{i\sqrt T \tilde B_r} dr \right)
\cdot   \left(\sqrt T e^{i\sqrt T \tilde B_t}\right)^*\\
\nonumber & + \frac{2}{T} \int_{s=0}^t
  \ell_1'(s) \int_{r=0}^s \ell_2(r) \sqrt T e^{i\sqrt T \tilde B_r} dr \cdot \left( \sqrt T e^{i\sqrt T \tilde B_s}\right)^* ds  \\
 \label{E:3.18} & + \frac{2}{T} \int_{s=0}^t
  \ell_1(s) \ell_2(s) \sqrt T e^{i\sqrt T \tilde B_s}  \cdot \left( \sqrt T e^{i\sqrt T \tilde B_s}\right)^* ds \\
 \label{E:3.19}&+ 2  \int_{s=0}^t \left(\ell_1(s) 
                 \int_{r=0}^s \ell_2(r) \sqrt T e^{i\sqrt T \tilde B_r} dr \right) \cdot  \left(  {i}   e^{i\sqrt T \tilde B_s} \right)^* d\tilde B_s
\end{align} 
Using Lemma \ref{L3.2}, we directly see that, when $T\to \infty$, the first two lines in $J_T(t)$ converges (uniformly in $t\in [0,1])$) to 0 in $L^2(\Omega)$.
By Theorem \ref{T:LFGN} and using \eqref{E1/2}, the term \eqref{E:3.18} on the third line converges to 
\begin{equation} \label{E:g-strato}
\int_{s=0}^t
  \ell_1(s) \ell_2(s) \begin{pmatrix}
   1&0\\0&1
  \end{pmatrix} ds.
\end{equation}

Using again Proposition \ref{P3.1} for  the last term \eqref{E:3.19}, one has
\begin{align}
  2  \int_{s=0}^t &\left(\ell_1(s) 
                 \int_{r=0}^s \ell_2(r) \sqrt T e^{i\sqrt T \tilde B_r} dr \right)  \cdot  \left(  {i}   e^{i\sqrt T \tilde B_s} \right)^* d\tilde B_s\notag \\
=& 4  \int_{s=0}^t \left(\ell_1(s) 
                 \int_{r=0}^s \ell_2(r) \left(i e^{i\sqrt T \tilde B_r}\right) d\tilde B_r \right) \cdot  \left(  {i}   e^{i\sqrt T \tilde B_s} \right)^* d\tilde B_s \label{Emart1} \\
+& 2  \int_{s=0}^t  \ell_1(s) 
                 R_{T,\ell_2}(s) \cdot  \left(  {i}   e^{i\sqrt T \tilde B_s} \right)^* d\tilde B_s\label{Emart2}
                 \end{align}
In particular \eqref{Emart2} also tends to 0. Finally, 
by Theorem \ref{T2}, the term \eqref{Emart1} converges  in law to 
\begin{equation} \label{Elim}
2 \int_{s=0}^t \left(\int_{r=0}^s \ell_2 (r)  (i d\SW_r)\right)     \ell_1(s) \left( i d\SW_s \right)^*.
\end{equation}

{
The proof is finished by using Slutsky's theorem and since the sum of \eqref{E:g-strato} and \eqref{Elim} provides the announced Stratonovich integral.}

\end{proof}

\section{Integrability and convergence of the dual}

{In all this section, we study the long-time behaviour and the asymptotics of the Malliavin dual. The estimates obtained here will be used in the next section to deduce gradient estimates of the semigroup.}

\subsection{Asymptotic behaviour of the Malliavin matrix}

{We focus first  on the Malliavin matrix. We introduce here a natural scaling   and  prove the  convergence in law  of the the Malliavin matrix under this rescaling.}

Note that  in  $L^2([0,T])$, for $i,j \in\{0,1,2\}$,
\[
\langle h_i, h_j \rangle = \int_0^T h_i(t) h_j(t) dt
                                 =  T \int_0^1 h_i(rT) h_j(rT) dr
\]
and that for $0\leq r\leq 1$,
\[\left\{
\begin{array} {c c l}
 h_0(Tr) &=& \quad 1,\\
 h_1(Tr) &=& \quad  \int_0 ^{Tr} \sin(B_t) dt
            =\quad \sqrt T \left(\sqrt T  \int_0^r  \sin( \sqrt T \tilde B_{s}) ds\right),\\
 h_2(Tr) &=& - \int_0 ^{Tr} \cos(B_t) dt
            = \sqrt T \left(-\sqrt T  \int_0^r  \cos( \sqrt T \tilde B_{s}) ds\right),
\end{array} \right.
\]
where $(\tilde B_r)_{0\leq r \leq 1}$ is still defined in \eqref{E31bis}.

In view of  Theorem \ref{T1}, it is thus  natural to introduce the {rescaled} matrix $\tilde \cC(T)$ defined by:
\begin{equation}\label{EtildeC}
 \cC(T) = \begin{pmatrix}
 \sqrt T & 0 &0  \\
 0 &T&0 \\
 0&0&T\\
 \end{pmatrix}
 \tilde  \cC(T) \begin{pmatrix}
 \sqrt T & 0 &0  \\
 0 &T&0 \\
 0&0&T\\
 \end{pmatrix}.
\end{equation}

Note that $\tilde \cC(T)$ is also the Gram matrix but in $L^2([0,1])$ of $(\tilde h_0,\tilde  h_1,\tilde h_2)$ where for $0\leq r \leq 1$,
\begin{equation}
\label{E52}
\left\{
\begin{array} {c c l}
\tilde h_0(r)&=& 1 \\
\tilde h_1(r)&=&     \sqrt T \int_0^r \sin (\sqrt  T \tilde B_s) ds \\ 
\tilde h_2(r)&=& - \sqrt T \int_0^r \cos(
\sqrt T \tilde B_s) ds \end{array}\right..
\end{equation} In particular, we may also write
\begin{equation}
    \label{E53}
    \tilde \cC(T)=\int_0^1\tilde I_T(r)\tilde I_T(r)^{*} dr
\end{equation} with 
\[
    \tilde I_T(r):= 
    \begin{pmatrix}
 1 \\   
 \sqrt T \int_0^r \sin (\sqrt T \tilde B_s) ds \\ 
-  \sqrt T \int_0^r \cos (\sqrt T \tilde B_s) ds \end{pmatrix}
    =\begin{pmatrix}
 1 \\   -i\sqrt T \int_0^r e^{
 i\sqrt T \tilde B_s} ds \end{pmatrix}.
\]

We have the following convergence in law:
\begin{proposition}\label{PG}
One has the convergence:
\begin{equation}
\label{E1}
\tilde \cC(T) \xrightarrow[T\to \infty]{Law} \cG 
 \end{equation}
 with
 \begin{equation}
\label{E1bis}
\cG =\begin{pmatrix}
 1 & \sqrt2\int_0^1 {\SW}_s^1 ds & \sqrt2\int_0^1 {\SW}_s^2 ds \\
 \sqrt2\int_0^1 {\SW}_s^1 ds &2\int_0^1 ({\SW}_s^1)^2 ds&2\int_0^1 {\SW}_s^1 {\SW}_s^2 ds \\
 \sqrt2\int_0^1 {\SW}_s^2 ds& 2\int_0^1 {\SW}_s^1 {\SW}_s^2 ds &2\int_0^1 ({\SW}_s^2)^2 ds\\
 \end{pmatrix}
 \end{equation}
  where   $\displaystyle\SW=\SW^1+i\SW^2$ is the Brownian motion from Theorem \ref{T2} and Theorem \ref{T1}. 
\end{proposition}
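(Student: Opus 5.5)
The convergence in law of $\tilde\cC(T)$ will follow from the convergence in law of the process $\tilde I_T$ through a continuous mapping argument. By \eqref{E53}, $\tilde\cC(T)=\Phi(\tilde I_T)$, where $\Phi:\cC([0,1],\R^3)\to M_{3\times3}(\R)$ is the map $\gamma\mapsto\int_0^1\gamma(r)\gamma(r)^*dr$. This map is continuous for the uniform topology: it is quadratic, and on uniformly bounded sets it is Lipschitz. It therefore suffices to identify the limit in law of $(\tilde I_T(r))_{0\le r\le 1}$ in $\cC([0,1],\R\times\CC)$.

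For that step we apply Theorem~\ref{T1} with $K=1$ and $g\equiv 1$; the iterated-integral component is not needed and can be dropped by projection. The theorem gives the convergence in law, uniformly on $[0,1]$,
\[
\Big(\sqrt T\int_0^r e^{i\sqrt T\tilde B_s}ds\Big)_{0\le r\le1}\xrightarrow[T\to\infty]{Law}\big(i\sqrt2\,\SW_r\big)_{0\le r\le1}.
\]
Alternatively, one can use Proposition~\ref{P3.1}, whose remainder is bounded by $6/\sqrt T$, together with the first step of the proof of Theorem~\ref{T2}. Multiplying by $-i$, which is continuous, yields
\[
-i\sqrt T\int_0^r e^{i\sqrt T\tilde B_s}ds\xrightarrow{Law}\sqrt2\,\SW_r .
\]
Hence $\tilde I_T\to(1,\sqrt2\SW^1,\sqrt2\SW^2)^*$ in law in $\cC([0,1],\R^3)$.

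The continuous mapping theorem then gives $\tilde\cC(T)\to\int_0^1(1,\sqrt2\SW_r^1,\sqrt2\SW_r^2)^*(1,\sqrt2\SW_r^1,\sqrt2\SW_r^2)\,dr$ in law. Computing the entries of this matrix gives exactly $\cG$ in \eqref{E1bis}.

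The only point needing care is the identification of the limit and of its constants. The limiting $\SW$ is obtained as the same Brownian motion as in Theorems~\ref{T2} and \ref{T1} simply by using those statements, so the identification is automatic. The constants must also be checked: the factor $i\sqrt2$ combined with the $-i$ in $\tilde I_T$ must produce the real coefficient $\sqrt2$ rather than a rotated Brownian motion. Since the law of a standard complex Brownian motion is rotation invariant, even a possible sign or rotation ambiguity would not affect the law of $\cG$. I do not expect a genuine obstacle here; the work was already done in Theorem~\ref{T1}.
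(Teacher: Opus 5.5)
Your proposal is correct and matches the paper's proof: by \eqref{E53}, $\tilde\cC(T)=\int_0^1\tilde I_T(r)\tilde I_T(r)^*\,dr$ is a continuous functional of the process in \eqref{ET1}, so Theorem~\ref{T1} with $g\equiv1$ and the continuous mapping theorem give $\tilde I_T\to(1,\sqrt2\,\SW)$ in law and hence $\tilde\cC(T)\to\cG$. One harmless slip: for $g\equiv 1$ the remainder bound in Proposition~\ref{P3.1} is $4/\sqrt T$ rather than $6/\sqrt T$, and this does not affect the argument.
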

We may also write:
\[
\cG=\int_0^1\tilde I_\infty(r)\tilde I_\infty(r)^{*} dr
\] with 
\begin{equation}\label{E1ter}
   \tilde I_\infty(r)=\begin{pmatrix}
   1\\\sqrt2 {\SW}_r^1\\\sqrt2{\SW}_r^2
   \end{pmatrix}=\begin{pmatrix}
   1\\\sqrt2 {\SW}_r
   \end{pmatrix}
\end{equation}
and with the identification $\displaystyle\SW=\binom{\SW^1}{\SW^2}$.

\begin{proof}[Proof of Proposition \ref{PG}]
 Observe that by \eqref{E53}
 the coefficients of the matrix $\tilde \cC(T)$ are obtained by products and integration of the left hand side of~\eqref{ET1} and thus   are a continuous functional of the left hand side of~\eqref{ET1}. Using Theorem \ref{T1}, we can conclude that $\tilde \cC(T)$ converges in law to this functional applied to the right hand side of~\eqref{ET1}, which yields~\eqref{E1bis}. 
\end{proof}

\subsection{Moments convergence of the renormalised matrix  $\tilde{\cC}(T)^{-1}$}
 
The next proposition is a key contribution for the main result of the paper. It brings quantitative asymptotic behaviour of  moments of $\tilde \cC(T)^{-1}$. Even if the existence of these  moments is well known by Malliavin calculus, obtaining uniform bounds in $T$ requires a careful study and additional tools.

\begin{proposition}
\label{P2}
For all $p> 0$, there exists $C_p>0$ such that for all $T\ge {\sqrt{2}}$,
\begin{equation}
\label{E3}
 \esp\left[\left\|\tilde \cC(T)^{-1}\right\|^{p}\right]\le C_p.
\end{equation}
Moreover
\begin{equation}
    \label{E2}
    \esp\left[\left\|\tilde \cC(T)^{-1}\right\|^{p}\right] \xrightarrow[T\to \infty]{}\esp\left[\left\| \cG^{-1}\right\|^{p}\right]
\end{equation}
where $\cG$ is defined by \eqref{E1}.
\end{proposition}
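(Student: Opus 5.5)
The uniform bound \eqref{E3} does most of the work. Once we have it, \eqref{E2} follows from Proposition~\ref{PG}. Indeed, $\tilde\cC(T)\to\cG$ in law, and $\cG$ is a.s. invertible: $\cG=\int_0^1\tilde I_\infty\tilde I_\infty^*dr$ is singular only if some nonzero $(\lambda_0,\lambda)$ gives $\lambda_0+\sqrt2\langle\lambda,\SW_r\rangle\equiv0$ on $[0,1]$, which has probability zero. Inversion is continuous on invertible matrices, so the continuous mapping theorem gives $\|\tilde\cC(T)^{-1}\|^p\to\|\cG^{-1}\|^p$ in law. The bound \eqref{E3} with $2p$ in place of $p$ gives uniform integrability, and hence convergence of the moments.

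For \eqref{E3}, $\tilde\cC(T)$ is symmetric nonnegative, and its entries have all moments bounded uniformly in $T$ by Lemma~\ref{L3.2}. Hence $\|\tilde\cC(T)^{-1}\|\le C\|\tilde\cC(T)\|^2/\det\tilde\cC(T)$, and it suffices to control the smallest eigenvalue. By the standard covering argument on the unit sphere $\bbS^2$ (finite $\varepsilon$-net, together with the bound on $\|\tilde\cC(T)\|$), it is enough to show: for every $q$ there is $C_q$ such that, uniformly in unit vectors $\mu=(\mu_0,\mu_1,\mu_2)$ and in $T\ge\sqrt2$,
\[
\PP\Big(\mu^*\tilde\cC(T)\mu\le\varepsilon\Big)=\PP\Big(\int_0^1\big(\mu_0+\langle(\mu_1,\mu_2),\tilde h(r)\rangle\big)^2dr\le\varepsilon\Big)\le C_q\varepsilon^q ,
\]
where $\tilde h=(\tilde h_1,\tilde h_2)$. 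We write $Y_r:=\mu_0+\langle(\mu_1,\mu_2),\tilde h(r)\rangle$. Then $Y$ is small in $L^2$ only if $Y$ is nearly constant on each short interval.

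To exploit this, we slice $[0,1]$ into $N\approx\varepsilon^{-a}$ intervals $[t_j,t_{j+1}]$. If $\int_0^1Y^2\le\varepsilon$, then on each slice the increment $Y_{t_{j+1}}-Y_{t_j}$, which is of the form $\sqrt T\int_{t_j}^{t_{j+1}}\langle\nu,e^{i\sqrt T\tilde B_s}\rangle ds$ with $\nu$ a rotation of $(\mu_1,\mu_2)$, must be small for many $j$. The case $|(\mu_1,\mu_2)|$ small is handled separately, since then $|\mu_0|\approx1$ and $Y$ stays near $\mu_0$ with high probability by Lemma~\ref{L3.2}. Conditionally on $\mathcal F_{t_j}$, and by the Markov property, the slice increments are oscillating integrals over a Brownian motion restarted at time $t_j$. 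Their conditional densities should be bounded by $C\sqrt{N}$-type constants, uniformly in $T$, so that each slice has conditional probability of being small at most $\varepsilon^{b}$. Multiplying over the $N$ slices by conditioning then gives a bound $\varepsilon^{bN}$-like, far beyond any polynomial.

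The main obstacle is proving this density bound uniformly in $T$. Via Proposition~\ref{P3.1}, on each slice the increment is $2i\int e^{i\sqrt T\tilde B}d\tilde B$ plus an $O(T^{-1/2})$ remainder. The martingale part has quadratic variation equal to $4$ times the slice length, as a complex process whose real and imaginary parts average the variance evenly, by \eqref{E1/2}. Time-changed, it is approximately Gaussian, but "approximately" is not enough for a density estimate. My plan is to split each slice into further sub-slices, so that the increment becomes a sum of conditionally nearly independent pieces. I would show each piece has a sub-Gaussian density bound, using a Fourier estimate of its characteristic function obtained from the same even-moment and Itô computations as in Lemma~\ref{L3.2}. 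I would then invoke Bobkov's result \cite{Bobkov:23} that sums of such variables keep a bounded sub-Gaussian density, with constants uniform in the number of summands. This gives the required uniform small-ball estimate. The restriction $T\ge\sqrt2$ should enter only to make the remainder terms of Proposition~\ref{P3.1} dominated on slices of length comparable to $1/\sqrt T$ or larger.
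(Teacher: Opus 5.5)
Your outer reductions are sound and match the paper. You derive \eqref{E2} from \eqref{E3} at a larger exponent plus Proposition~\ref{PG}; your check that $\cG$ is a.s.\ invertible, which the continuous-mapping step needs, is a useful addition the paper leaves implicit. You then reduce \eqref{E3} to a small-ball bound for $\mu^*\tilde\cC(T)\mu$, uniform in unit $\mu$ and $T\ge\sqrt2$, as the paper does via Lemma~4.7 of \cite{Hairer:11}.

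\textbf{The gap.} The core step you flag as the main obstacle is a density bound for slice increments, uniform in $T$ and in the conditioning. The mechanism you propose cannot deliver it.
\begin{itemize}
\item The computations in Lemma~\ref{L3.2} evaluate $\esp[e^{i\lambda\sum_k\pm\tilde B_{s_k}}]$, i.e.\ Fourier transforms of Gaussian values of $\tilde B$. They yield \emph{upper} bounds on moments of the oscillating integral $X$, hence tail bounds. They give no anti-concentration and no decay of $|\esp[e^{i\xi X}]|$ as $|\xi|\to\infty$.
\item Proposition~\ref{P3.1} does not help. Closeness in law of the martingale part to a Gaussian says nothing about densities, and a bounded but dependent remainder can destroy any density bound. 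Moreover the remainder is $O(T^{-1/2})$, which is not small compared with a slice increment of size $N^{-1/2}$ unless $N\ll T$.
\item Bobkov's inequalities need \emph{exactly} independent summands, not ``nearly independent'' ones.
\end{itemize}

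\textbf{The choice $N\approx\ve^{-a}$.} This creates a further problem. For fixed $T$ (say $T=\sqrt2$) and $\ve\to0$, the slices have length $T/N\to0$ in the original time scale. There the conditional density of $\frac1{\sqrt T}\int_{Tt_j}^{Tt_{j+1}}\langle\nu,e^{iB_s}\rangle ds$ follows small-time hypoelliptic scaling: it is of order $N^{3/2}/T$ in generic directions. It is worse when $\langle\nu,ie^{iB_{Tt_j}}\rangle=0$, where only the step-three bracket gives non-degeneracy. So no $C\sqrt N$ bound uniform in $T$ holds. This regime is also unnecessary: to get $\ve^q$ you only need a fixed number $N=N(q)$ of slices. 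That is what the paper does, taking $N>2p$. AM--GM and Jensen then reduce $\esp[(a^*\tilde\cC(T)a)^{-p}]$ to $\esp[\prod_kF_k(Y(t_k))]$ with exponents $2p/N<1$, which Lemmas~\ref{LY}, \ref{L3} and \ref{L2} handle.

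\textbf{What supplies the missing density ingredient.} It is Lemma~\ref{L1}, built as follows.
\begin{itemize}
\item Cut $\int_0^t\sin(\theta+B_s)\,ds$ into unit-length pieces and condition on $B$ mod $2\pi$ at the cut times. By $2\pi$-periodicity this makes the pieces \emph{exactly} independent circle-bridge functionals.
\item Each piece has a uniformly bounded density. This comes from H\"ormander's theorem for the kinetic Brownian motion on the compact $\bbS\times\R$ at times $r\in[1,2]$, disintegrated using the lower bound of the circle heat kernel.
\item Bobkov's bound \cite{Bobkov:23}, formula (1.7), then bounds the density of the normalised sum by $\sqrt e\,M$, uniformly in the number of pieces.
\item Sub-Gaussian decay, needed in Lemma~\ref{L3} because the shift $\sqrt{T/t}\cos\varphi$ is unbounded, comes from a two-piece split and Bobkov's Lemma~2.1. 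It also uses a conditional Laplace-transform estimate from $\int_0^s\sin B_r\,dr=2(\sin b_0-\sin B_s+\int_0^s\cos B_r\,dB_r)$ together with the bridge-drift bound.
\end{itemize}
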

A direct consequence of the above proposition is that the right hand side of~\eqref{E2} is finite for all $p>0$. 

{As it is seen from Lemma \ref{L3}, an important step in the proof is to show that the oscillating integrals have sub-Gaussian densities. This is given in Lemmas \ref{L1} and \ref{LY} below. This comes from recent results from \cite{Bobkov:23} and  by showing 
 appropriate slicing of the oscillating integrals have sub-Gaussian densities. The proofs of Lemmas \ref{L3}, \ref{L1} and  \ref{LY} will be done in the next subsection.
 }

\begin{proof}[Proof of Proposition \ref{P2}]

To prove \eqref{E2}, the convergence in law of $\tilde\cC(T)$ to $\cG$ is a key point but it is not sufficient.
However it is enough to prove \eqref{E3}
{
with a parameter $p'$  larger than the parameter $p$ in~\eqref{E2}. Indeed, for fixed $p\in [1,p')$ this will prove uniform integrability of the random variables $\displaystyle \left\|\tilde \cC(T)^{-1}\right\|^{p}$ and convergence~\eqref{E2} for $p$ will be a consequence of Theorem~3.5 in \cite{Billingsley:99}.
}

We then turn to prove \eqref{E3}. 
 
Since moments of $\|\tilde\cC(T)\|$ are uniformly bounded in $T$ ({using Lemma~\ref{L3.2}}), Lemma~4.7 in \cite{Hairer:11} tells us that for proving~\eqref{E3}, it is enough to prove that for all $p>0$, there exists $C_p>0$ (the constant $C_p$ will change from one equation to another) such that for all column vector $a\in \R^3$ satisfying $\|a\|=1$, $\ve>0$, $T>{\sqrt{2}}$,
\begin{equation}
    \label{E4}
    \PP\left[a^{*} \tilde \cC(T) a\le \ve\right]\le C_p\ve^p.
\end{equation}
By Markov inequality,
 \begin{equation}
    \label{E6}
    \PP\left[a^{*} \tilde \cC(T) a\le \ve\right]= \PP\left[\left(a^{*} \tilde \cC(T) a\right)^{-1}\ge \ve^{-1}\right]\le \ve^p\esp\left[\left(a^{*} \tilde \cC(T) a\right)^{-p}\right].
\end{equation}
So we are left to get a uniform bound in $T\ge {\sqrt{2}}$ and unit vector $a\in \R^3$ for $\esp\left[\left(a^{*} \tilde \cC(T) a\right)^{-p}\right]$.

Since $\tilde \cC(T)$ is a Gram matrix, for any $a\in \R^3$, one has 
\begin{align*}
a^{*} \tilde \cC(T) a &= \sum_{i,j=0}^2 a_i \; \langle \tilde h_i, \tilde h_j\rangle_{L^2([0,1])} \; a_j\\
                       &=  \Big\Vert \sum_{i=0}^2 a_i \tilde h_i  \Big\Vert_{L^2([0,1])} ^2.
\end{align*}
\begin{enumerate}
    \item Let us firstly consider the case when $p\in (0,1/2)$.

   {Writing $a=\left(\begin{array}{c}
             \cos\varphi \\
             \sin\varphi \,\eta
        \end{array}\right)$ with unitary $\displaystyle \eta=e^{-i\theta}$ we have:}  
        
        \begin{equation}
            \label{E8}
       a^{*} \tilde \cC(T) a =    \frac1{T^2} \int_0^T  \left(\sqrt{T}\cos\varphi+\sin\varphi \int_0^t \sin \left(
       \theta
       + B_s\right) ds\right)^2 dt. 
        \end{equation}
      
        This yields the bound below for $T\ge \sqrt2$
        \[
      a^{*} \tilde \cC(T) a \ge \frac14
      \frac{2}{T^2-1}\int_1^T   \left(\sqrt{\frac{T}{t}}\cos\varphi+\sin\varphi \frac1{\sqrt t}\int_0^t \sin \left(
      \theta
      + B_s\right) ds\right)^2 t\,dt
        \]
        and then by convexity,
        \begin{equation}
            \label{E9}
            \begin{split}
          &  \esp\left[\left(a^{*} \tilde \cC(T) a\right)^{-p}\right]\\&\le 
     4^p     \frac{2}{T^2-1}\int_1^T    
     \esp\left[\left|\sqrt{\frac{T}{t}}\cos\varphi+\sin\varphi \frac1{\sqrt t}\int_0^t \sin \left(
     \theta
     + B_s\right) ds\right|^{-2p}\right] t\, dt
     \end{split}
        \end{equation}
      {and   the result follows from the following Lemmas \ref{L3} and \ref{L1}} {whose proofs are postponed to Subsection \ref{Subsec Lemmes}.}

        \begin{lemma}\label{L3}
        {Fix  $\alpha\in (0,1)$. Then
        \begin{equation}\label{E10bis}
         \sup_{\varphi\in[0,2\pi], a\geq 1, Y \in \mathcal {RV}_{M,\sigma}}\esp\left[\left|a \cos\varphi+\sin\varphi Y  \right|^{-\alpha}\right] 
         \leq C(M,\sigma,\alpha)<+\infty
         \end{equation}
         where $\mathcal {RV}_{M,\sigma}$ denotes the set of random variables $Y$ with sub-Gaussian density bounded by: 
         \[
         p_Y(x) \leq M \exp\left(-\frac{x^2}{2\sigma^2}\right).\]
         }
        \end{lemma}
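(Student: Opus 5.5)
We must bound $\E|a\cos\varphi+\sin\varphi\, Y|^{-\alpha}$ uniformly, using only the sub-Gaussian density bound on $Y$. Split according to the size of $|\sin\varphi|$.

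\emph{Case $|\sin\varphi|\ge 1/2$.} Write $W=a\cos\varphi+\sin\varphi\,Y$. Then $W$ has density
\[
p_W(w)=\frac{1}{|\sin\varphi|}\,p_Y\!\left(\frac{w-a\cos\varphi}{\sin\varphi}\right)\le 2M ,
\]
so $\|p_W\|_\infty\le 2M$. For any random variable with bounded density and any $\alpha\in(0,1)$,
\[
\E|W|^{-\alpha}\le \int_{|w|\le 1}|w|^{-\alpha}\,2M\,dw+1=\frac{4M}{1-\alpha}+1 .
\]
This is uniform in $\varphi$ and $a$; we do not even need the Gaussian tail here.

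\emph{Case $|\sin\varphi|<1/2$.} Then $|\cos\varphi|\ge \sqrt3/2$, so $|a\cos\varphi|\ge \sqrt3/2=:c_0$ because $a\ge1$. Set $s=|\sin\varphi|$. On the event $\{s|Y|\le c_0/2\}$ we have $|W|\ge c_0/2$, so this event contributes at most $(2/c_0)^{\alpha}$.

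On the complementary event $\{|Y|\ge c_0/(2s)\}$ we use the bounded density of $W$ restricted to that event. The restricted density is pointwise at most
\[
\frac{1}{s}\,M\exp\!\left(-\frac{c_0^2}{8s^2\sigma^2}\right).
\]
For $s>0$, this quantity is bounded by a constant $C(M,\sigma)$: indeed $\sup_{s>0}s^{-1}e^{-c/s^2}<\infty$. If $s=0$ the event is empty.

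Hence
\[
\E\bigl[|W|^{-\alpha}\bone_{\{|Y|\ge c_0/(2s)\}}\bigr]\le \int_{|w|\le1}|w|^{-\alpha}C(M,\sigma)\,dw+1,
\]
which is again uniform in $\varphi$ and $a$.

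\emph{Where the tail assumption matters.} The only point requiring care is the regime where $\sin\varphi$ is small. There the naive density bound $M/|\sin\varphi|$ blows up. This is exactly where the sub-Gaussian tail is used: $W$ can approach $0$ only when $Y$ is of order $1/s$, and the Gaussian decay of $p_Y$ on that set beats the factor $1/s$. Combining the two cases gives a constant depending only on $M$, $\sigma$ and $\alpha$, which proves \eqref{E10bis}.
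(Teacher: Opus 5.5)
Your proof is correct, but it takes a different route from the paper's.

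The paper avoids any case split. It first bounds $p_Y$ by $M\sigma\sqrt{2\pi}$ times the $\SN(0,\sigma^2)$ density, which replaces $Y$ by a Gaussian $Z$. It then uses Lemma~\ref{L2}: the map $b\mapsto\E|b+Z|^{-\alpha}$ is nonincreasing in $|b|$, proved by a reflection coupling. This lets it replace the shift $a\cos\varphi$, with $a\ge1$, by $U\cos\varphi$, where $U$ is uniform on $[-1,1]$ and independent of $Z$. The density of $\cos\varphi\,U+\sin\varphi\,Z$ is then at most $\min\bigl(\|f_U\|_\infty/|\cos\varphi|,\|f_Z\|_\infty/|\sin\varphi|\bigr)$. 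This is uniformly bounded because one of $|\cos\varphi|,|\sin\varphi|$ is at least $1/\sqrt2$, and the conclusion follows as in the bounded-density case.

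You instead handle the degenerate regime $|\sin\varphi|<1/2$ directly. There $|a\cos\varphi|\ge\sqrt3/2$ keeps $W$ away from $0$ unless $|Y|\ge c_0/(2|\sin\varphi|)$. On that set the tail of $p_Y$ absorbs the Jacobian factor $1/|\sin\varphi|$, since $\sup_{s>0}s^{-1}e^{-c/s^2}<\infty$.

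Your argument has three advantages. It is shorter and self-contained: it makes Lemma~\ref{L2} unnecessary, and the paper proves that lemma only for this purpose. It gives explicit constants. It also shows the sub-Gaussian hypothesis is much stronger than needed: boundedness of $p_Y$ together with $\sup_{|y|\ge r}p_Y(y)=O(1/r)$ suffices.

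The paper's route instead isolates a comparison principle: after domination by a Gaussian, a larger shift can only help. As the remark after Lemma~\ref{L2} notes, this principle extends to any symmetric unimodal majorant.
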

        \begin{lemma}
\label{L1}
Denote for $\theta\in [0,2\pi]$, $t>0$, $\displaystyle J(\theta,t):= \frac1{\sqrt{t}}\int_0^t\sin(\theta+B_s) ds$. 
First we denote by $J(\theta,b,t)$, the random variable obtained by  conditioning $J(\theta,t)$ on $B_t=b \ {\rm mod}\ 2\pi$. Then  $J(\theta,b,t)$ has a density $x\mapsto p_{\theta,b,t}(x)$, and for all $t_0>0$ there exist $M(t_0)>0$, $\sigma(t_0)>0$ such that for all $\theta, b\in \R$ and $t\ge t_0$, 
\begin{equation}
    \label{E7}
    p_{\theta,b,t}(x)\le M(t_0)\exp\left\{-\frac{x^2}{\sigma(t_0)^2}\right\}.
\end{equation}
As a consequence,  $J(\theta,t)$ also has a density $x\mapsto p_{\theta,t}(x)$ which satisfies for all $\theta\in \R$ and all $t\geq t_0 $ 
\begin{equation}
    \label{E7bis}
    p_{\theta,t}(x)\le M(t_0)\exp\left\{-\frac{x^2}{\sigma(t_0)^2}\right\}.
\end{equation}
\end{lemma}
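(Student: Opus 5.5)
The plan is to prove the conditional bound \eqref{E7} first and then obtain \eqref{E7bis} by averaging. Indeed, $p_{\theta,t}(x)=\int p_{\theta,b,t}(x)\,\mu_t(db)$, where $\mu_t$ is the law of $B_t$ mod $2\pi$, so \eqref{E7bis} follows at once. For \eqref{E7} I would slice the time interval. Write $t=n\tau+r$ with a fixed slice length $\tau\in[t_0/2,t_0]$ (or simply $n=\lfloor t/t_0\rfloor$ slices of length $t/n\in[t_0,2t_0)$). Then decompose
\[
\int_0^t\sin(\theta+B_s)\,ds=\sum_{j=0}^{n-1}\xi_j,\qquad \xi_j=\int_{jt/n}^{(j+1)t/n}\sin(\theta+B_s)\,ds .
\]
Now condition on the values $\beta_j:=B_{jt/n}$ mod $2\pi$, $j=1,\dots,n$, with $\beta_n=b$. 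Given these, the Markov property of $B$ (projected on the circle) makes the $\xi_j$ conditionally independent. Each $\xi_j$ is distributed as $\int_0^{t/n}\sin(\theta_j+\hat B_s)\,ds$ for a Brownian bridge on the circle from $\beta_j$ to $\beta_{j+1}$, with $\theta_j=\theta+\beta_j$. Each slice is bounded, $|\xi_j|\le 2t_0$, with mean whose conditional expectation is controlled.

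The first key step is a uniform density bound for a single slice. I want $p_{\xi_j\mid\beta_j,\beta_{j+1}}(y)\le M_0$ uniformly in $\theta_j$, the endpoints, and slice lengths in $[t_0,2t_0]$, together with compact support in $[-2t_0,2t_0]$. This is a nondegeneracy statement for the functional $\int_0^\ell\sin(\theta+\hat B_s)\,ds$ of a bridge. I would try Malliavin calculus on the bridge (its derivative is $\int\cos(\theta+\hat B_s)h(s)\,ds$, with inverse Malliavin variance having all moments), combined with compactness and continuity in the parameters $(\theta,\beta,\beta',\ell)$ over a compact set. That gives a uniform $L^\infty$ bound on the density, and even a bounded continuous density. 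The one thing to be careful about is the winding of the bridge on the circle: the bridge from $\beta$ to $\beta'$ mod $2\pi$ is a mixture over lifts $\beta'+2\pi m$ with Gaussian weights, which is harmless.

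The second step combines slices. Having $n$ independent summands with densities bounded by $M_0$ and supports of size $\le 4t_0$, I invoke the result of \cite{Bobkov:23}: a normalized sum $n^{-1/2}\sum(\xi_j-\E\xi_j)$ of independent bounded variables with uniformly bounded densities has a density with a sub-Gaussian bound $M\exp(-x^2/\sigma^2)$ that is uniform in $n$. The centering must then be handled. The conditional means $m_j=\E[\xi_j\mid\beta_j,\beta_{j+1}]$ are bounded by $2t_0$, but their sum could be of order $n$, which would shift the Gaussian by $\sqrt n$ and destroy uniformity. I expect this to be the main obstacle. What I would try first is to note that $m_j$ depends on $(\theta+\beta_j,\beta_{j+1}-\beta_j)$, and that conditioning only on every other point (even indices) and treating the pairs of slices as blocks allows me to integrate out the intermediate points. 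The block means then become $\E[\int_0^{2\ell}\sin(\theta'+B_s)ds\mid B_{2\ell}-B_0]$-type quantities. Alternatively, I would exploit that a bound on densities uniform over all translates is what matters: a sub-Gaussian bound for $x\mapsto p(x)$ centered at $\mu=n^{-1/2}\sum m_j$ only yields $M e^{-(x-\mu)^2/\sigma^2}$. So I would need to show $|\mu|\le C$. For that I would rewrite $m_j$ via the Itô trick of Proposition~\ref{P3.1}: $\int\sin(\theta+B_s)ds$ equals a boundary term in $\cos$ and $\sin$ plus a martingale. The conditional means of the martingale increments given the endpoints are then small, and the boundary terms telescope, giving $\sum_j m_j=O(1)+\sum_j(\text{bridge drift corrections})$. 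These corrections must be controlled, possibly by choosing the conditioning grid coarser so that the telescoping is exact up to bounded error.

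Finally, dividing by $\sqrt t\asymp\sqrt{n t_0}$ gives the scaling. Together with the uniform bound $|\mu|\le C$, absorbing the shift into slightly worse constants $M(t_0),\sigma(t_0)$ yields \eqref{E7}. Averaging over the conditioning variables then gives the bound conditional on $B_t=b$ alone, and \eqref{E7bis} follows.
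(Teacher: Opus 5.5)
\textbf{There is a genuine gap, at the centering step.} Several parts of your plan are fine:
\begin{itemize}
\item reducing \eqref{E7bis} to \eqref{E7} by averaging;
\item slicing time and conditioning on the grid values mod $2\pi$;
\item the uniform density bound for a single slice. The paper gets this from H\"ormander's theorem for $(B_r \bmod 2\pi,\int_0^r\sin B_s\,ds)$ plus a lower bound on the circle heat kernel, which avoids Malliavin calculus on bridges.
\end{itemize}
The problem is extracting Gaussian decay from the same fine-grid conditioning. A sub-Gaussian density bound for centered normalized sums of independent bounded variables with bounded densities does hold (for instance by exponential tilting combined with the bound $\sqrt{e}M$ on the maximal density of normalized sums from \cite{Bobkov:23}). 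But it only gives decay around $\mu(\beta)=\frac{1}{\sqrt t}\sum_j m_j$, and the uniform bound $|\mu|\le C$ you need is false.

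Take $\theta=\pi/2$, $b=0$ and all grid values $\equiv 0$. Each slice is then $\int_0^\tau\cos(\hat B_s)\,ds$ for a circle bridge from $0$ to $0$. Its marginals are symmetric and unimodal at $0$, so its mean $m$ is strictly positive, and $\mu=nm/\sqrt{n\tau}\asymp\sqrt n$. By continuity this persists on a set of configurations of positive probability, on which the conditional density of $J$ is of order one at some $x\asymp\sqrt n$. So no bound of the form \eqref{E7} can hold uniformly in the fine conditioning.

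Your proposed repairs do not fix this. Grouping slices in pairs produces block means with the same defect. In the It\^o rewriting the boundary terms do telescope, but the bridge-drift corrections are of order one per slice. In the example they all have the same sign: the boundary terms vanish there, so $m$ is entirely drift correction.

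The missing idea is to separate the two tasks, as the paper does.
\begin{itemize}
\item \emph{Bounded density.} Use the fine grid only for the supremum of the density, which is translation invariant. Given the grid, \cite{Bobkov:23} bounds the density of the normalized sum by $\sqrt e M$ whatever the conditional means are. Integrating over the grid then gives a uniform bound on $p_{\theta,b,t}$.
\item \emph{Gaussian decay.} Condition only on $(B_{t/2},B_t)$ mod $2\pi$. The two halves $X_1,X_2$ are then independent and each is distributed as $\frac{1}{\sqrt2}J(a,b',t/2)$, so each has a bounded density by the first step.
\item \emph{Laplace transforms.} Each half also has a uniformly sub-Gaussian conditional Laplace transform. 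Split it by Cauchy--Schwarz into two quarters. On the quarter away from the pinned endpoint, apply \eqref{eq:sqrt-dt}; there the bridge drift is $O(1/t)$ and integrates to $O(1)$. Handle the other quarter by time reversal.
\item \emph{Combining.} Lemma 2.1 of \cite{Bobkov:23} with only two summands then gives the Gaussian bound with no loss in $n$.
\end{itemize}

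Alternatively, you could keep the fine grid and treat $\mu(\beta)$ as random. You would then prove that it is sub-Gaussian under the law of the grid pinned at $b$, and average $Me^{-(x-\mu)^2/\sigma^2}$. That is a further nontrivial argument your proposal does not contain.
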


    \item Finally we consider the general case $p>0$.
  
     For general $p\geq 1/2$, let $N\geq 2$ such that $N>2p$. We write 
      \begin{equation}
        \label{E11bis}
        \begin{split}
        a^{*} \tilde \cC(T) a \ge &   \sum_{k=1}^N  \frac{1}{T^2} \int_{\frac{(k-1/2)T}{N}}^{k\frac{T}{N}}  \left(\sqrt{T}\cos\varphi+\sin\varphi \int_0^t \sin \left(\theta + B_s\right) ds\right)^2 dt. 
        \end{split}
    \end{equation}
Using the  arithmetic-geometric inequality we get 
    \begin{equation}
        \label{E12bis}
        a^{*} \tilde \cC(T) a  \ge   \frac{N}{T^2} \prod_{k=1}^N\left(\int_{(k-1/2)\frac{T}{N}}^{k\frac{T}{N}}  \left(\sqrt{T}\cos\varphi+\sin\varphi \int_0^t \sin \left(\theta+ B_s\right) ds\right)^2 dt\right)^{1/N} .
    \end{equation}
    
    {Therefore, 
    \begin{align*}
       & \esp\left[\left(a^{*} \tilde \cC(T) a\right)^{-p}\right]\\
       &\le N^p \esp\left[\prod_{k=1}^N\left(\frac{1}{T^2}\int_{(k-1/2)\frac{T}{N}}^{k\frac{T}{N}}  \left(\sqrt{T}\cos\varphi+\sin\varphi \int_0^t \sin \left(\theta + B_s\right) ds\right)^2 dt\right)^{-p/N} 
       \right]\\
       &= N^p \esp\left[\prod_{k=1}^N\left(\frac{1}{T^2}\int_{(k-1/2)\frac{T}{N}}^{k\frac{T}{N}}  \left(\frac{\sqrt{T}}{\sqrt{t_k}} \cos\varphi+\sin\varphi  \frac{1}{\sqrt t_k}\int_0^{t_k} \sin \left( \theta + B_s\right) ds\right)^2 t_k\, dt_k\right)^{-p/N}
       \right].
       \end{align*}
     By the Jensen inequality applied for each of the  probability measure 
     { $$\frac{2N^2}{T^2(k-\frac{1}{4})} t_k\mathbb{1}_{[(k-1/2)\frac{T}{N},k\frac{T}{N}]} dt_k$$} and the Fubini theorem, one obtains the (rough) estimate
      \begin{align*}
       & \esp\left[\left(a^{*} \tilde \cC(T) a\right)^{-p}\right]\\
       &\le N^p  \int_{\frac{T}{2N}}^{\frac{T}{N}} \dots \int_{(N-1/2)\frac{T}{N}}^{T}
       \left(\frac{1-\frac{1}{4}}{2N^2}\right)^{p/N-1} \frac{t_1}{T^2} dt_1 \dots \left(\frac{N-\frac{1}{4}}{2N^2}\right)^{p/N-1}\frac{t_N}{T^2} dt_N \\ 
    &\esp\left[\prod_{k=1}^N  \left(\frac{\sqrt{T}}{\sqrt{t_k}} \cos\varphi+\sin\varphi  \frac{1}{\sqrt t_k}\int_0^{t_k} \sin \left( \theta + B_s\right) ds\right)^{-2p/N} \right] \\
      &  \vspace{1cm}
      \le 2^{-p}  \int_{\frac{T}{2N}}^{\frac{T}{N}} \dots \int_{(N-1/2)\frac{T}{N}}^{T}
       \frac{2N^2}{T^2(1-\frac{1}{4})} t_1 dt_1 \dots \frac{2N^2}{T^2\left(N-\frac{1}{4}\right)}t_Ndt_N
    \esp\left[\prod_{k=1}^N  F_k(Y(t_k)) \right]
       \end{align*}
       with 
       \begin{equation}\label{EY}
       Y(t_k)=  \frac{1}{\sqrt t_k}\int_0^{t_k} \sin \left(\theta + B_s\right) ds 
       \end{equation}
       and
       \begin{equation}\label{EF}
       F_k(y)=\left(\frac{\sqrt{T}}{\sqrt{t_k}} \cos\varphi+\sin\varphi  y\right)^{-2p/N}.
       \end{equation}
       }
     By Lemma \ref{LY} below, 
      \[
      \esp\left[\prod\limits_{k=1}^N  F_k(Y(t_k)) \right]
    \leq C_N'.
      \]
      As a consequence, 
      \[ \sup_{a\in \R^3, \Vert a \Vert=1, T \geq 1}
      \esp\left[\left(a^{*} \tilde \cC(T) a\right)^{-p}\right]
      \leq C_p
      \]
      and Proposition \ref{P2} follows.
      \end{enumerate}
\end{proof}

\begin{lemma}\label{LY}
 Let $T \geq 1$, $N\geq 2$.  Then there exist some constants $C_N>0$ and $\sigma_N>0$ independent of $\theta \in  \R$   and of $(t_1,t_2\ldots,t_N)\in \prod\limits_{k=1}^N \left[\frac{(k-1/2)T}{N},\frac{kT}{N}\right]$ such that the density  of the vector $Y=(Y(t_1), \dots, Y(t_N))$ defined in \eqref{EY} is bounded for $y\in\R^N$ by:
 \begin{equation}\label{EfY}
f_Y(y) \leq C_N \exp \left(- \frac{ \Vert  y \Vert^2}{2\sigma_N^2} \right).
 \end{equation}
 As a consequence, there exists a constant $C_N'$ such that  for all $\theta \in \R$:
  \[
     \forall (t_1,t_2\ldots,t_N)\in\prod\limits_{k=1}^N \left[\frac{(k-1/2)T}{N},\frac{kT}{N}\right], \qquad\esp\left[\prod\limits_{k=1}^N  F_k(Y(t_k)) \right]
    \leq C_N' .
      \]
\end{lemma}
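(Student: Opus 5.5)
The plan is to slice the Brownian path at the times $t_1<t_2<\dots<t_N$ and to use the Markov property, so that the joint density of $Y$ is built from conditional densities of increments. Write $\sqrt{t_k}\,Y(t_k)=\sum_{j\le k} \Delta_j$ with
\[
\Delta_j:=\int_{t_{j-1}}^{t_j}\sin(\theta+B_s)\,ds,\qquad t_0=0 .
\]
Conditionally on $\mathcal F_{t_{j-1}}$, the increment $\Delta_j$ has the law of $\sqrt{t_j-t_{j-1}}\,J(\theta+B_{t_{j-1}},\,t_j-t_{j-1})$. The gaps satisfy $t_j-t_{j-1}\ge T/(2N)\ge 1/(2N)=:t_0'$ since $T\ge1$. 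Lemma~\ref{L1} in its conditioned form gives the key fact here: conditionally on $B_{t_{j-1}}$ and on $B_{t_j}$ mod $2\pi$, the normalised increment $\Delta_j/\sqrt{t_j-t_{j-1}}$ has a density bounded by $M\exp(-x^2/\sigma^2)$, with $M,\sigma$ depending only on $N$. Only the starting phase and the terminal phase mod $2\pi$ matter, because of periodicity of $\sin$.

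Next I will show that $\Delta=(\Delta_1,\dots,\Delta_N)$ has a sub-Gaussian joint density. Condition successively: given $(B_{t_j}\bmod 2\pi)_{j\le N}$, the pieces of path on the disjoint intervals are conditionally independent bridges by the Markov property. So the conditional joint density of the normalised increments is a product of densities each bounded by $M e^{-x_j^2/\sigma^2}$. Integrating over the phases preserves the bound, which gives
\[
f_{\tilde\Delta}(x)\le M^N e^{-\|x\|^2/\sigma^2}.
\]
I will then pass to $Y$ by the linear map $x\mapsto y$, $y_k=t_k^{-1/2}\sum_{j\le k}\sqrt{t_j-t_{j-1}}\,x_j$. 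This map is lower triangular with diagonal entries $\sqrt{(t_k-t_{k-1})/t_k}\in[c_N,1]$, and all its entries are bounded by $1$, uniformly in $T$ and in the admissible $t_k$. Both it and its inverse have operator norms bounded depending only on $N$, so the change of variables gives $f_Y(y)\le C_N\exp(-\|y\|^2/(2\sigma_N^2))$, which is \eqref{EfY}.

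For the consequence, I write
\[
\E\Big[\prod_k F_k(Y(t_k))\Big]\le C_N\int_{\R^N}\prod_k F_k(y_k)\,e^{-\|y\|^2/(2\sigma_N^2)}\,dy .
\]
The right side factorises into a product of one-dimensional integrals
\[
C_N\prod_k\int_\R \big|a_k\cos\varphi+\sin\varphi\, y\big|^{-2p/N}e^{-y^2/(2\sigma_N^2)}\,dy,\qquad a_k=\sqrt{T/t_k}\ge1 .
\]
Each factor is exactly the quantity bounded in Lemma~\ref{L3} with $\alpha=2p/N<1$, applied to a Gaussian $Y$ (up to a constant), so it is bounded uniformly in $\varphi$, $a_k\ge1$ and $T$. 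This gives $C_N'$.

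The main obstacle is the rigorous conditional-independence step. Lemma~\ref{L1} conditions only on the endpoint mod $2\pi$ starting from $0$, with the start phase absorbed into $\theta$. So I must check that conditioning on the phases at all slice times makes the slices independent, and that each conditional density still satisfies the uniform bound of Lemma~\ref{L1}. This should follow from the Markov property on the circle $\R/2\pi\Z$, since $\sin(\theta+B_s)$ depends only on $B_s$ mod $2\pi$. I will carry it out with regular conditional distributions and the disintegration over the phase variables.
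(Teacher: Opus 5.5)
Your proposal is correct and follows essentially the same route as the paper. Both arguments slice at $t_1<\dots<t_N$ and condition on the phases $B_{t_k}\bmod 2\pi$, so that the normalised slices $J_k$ are conditionally independent with uniform sub-Gaussian densities from Lemma~\ref{L1} (with $t_0=1/(2N)$). Both then integrate out the phases and transfer the bound to $Y=AJ$ through the lower-triangular change of variables, using a uniform bound on $\det A^{-1}$ and norm equivalence. Finally, both factorise against a Gaussian and invoke Lemma~\ref{L3} with $\alpha=2p/N<1$. The conditional-independence step you flag as the main obstacle is handled in the paper at the same level of detail, via the Markov property of the circle-valued Brownian motion.
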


\subsection{Proofs of Lemmas \ref{L3}, \ref{L1} and \ref{LY}}\label{Subsec Lemmes}
{We now turn to the proofs of the three remaining Lemmas. We begin with showing that $\displaystyle J(\theta,t):= \frac1{\sqrt{t}}\int_0^t\sin(\theta+B_s) ds$ has a sub-Gaussian density, i.e. Lemma \ref{L1}.}

\begin{proof}[Proof of Lemma \ref{L1}]
We  prove the result for  $J(\theta,b,t)$ the conditioning of  $J(\theta,t)$ on $B_t=b$. The result for $J(\theta, t)$ follows by integrating  the conditional  density \eqref{E7}.

 The proof will be based on arguments of Bobkov \cite{Bobkov:23}. It will be separated in two arguments. First, we will prove that the density $p_{\theta,b,t}$ of $J(\theta,b,t)$ is bounded. Then in a second step, we will  prove the gaussian decay of the density. For simplicity we only make the proof for $t_0=4$.

To prove that the density is bounded,  we split $J$ in "a lot of small pieces":
for $t\geq \frac{t_0}{2}=2$, let $n=[t]$  and write
\[ 
J(\theta,t)=\frac1{\sqrt t} \int_0^t \sin \left( B_s\right) ds 
       =\frac {\sqrt n} {\sqrt t}   \left( \frac{X_1+\dots + X_n}{\sqrt n}\right)
 \]   
with 
\[
 X_k=\int_{k-1}^k \sin \left( B_s\right) ds \textrm{ for } 1\leq k \leq n-1 \textrm{ and } X_n= \int_{n-1}^t\sin \left(B_s\right) ds
\]
and where, here, $(B_s)$ is a real Brownian motion starting from $\theta$.
 We now condition on the event $(B_0,\dots,B_{n-1}, B_t)=(b_0,\dots, b_{n-1}, b_n)\mod 2\pi$ with $b_0=\theta, b_n=b$. Under this conditioning, the variables $(X_i)_{1\leq i\leq n}$ become independent.
  Moreover the random variables 
$\displaystyle
\int_0^r\sin(B_s^{a,b})ds 
$, with $a,b\in \bbS$, $r\in [1,2]$ and $B_s^{a,b}$ a Brownian bridge from $B_0^{a,b}=a$ to $B_r^{a,b}=b$, have densities uniformly bounded by some constant $M>0$.
Indeed, applying Hörmander Theorem to the kinetic Brownian motion, it is clear that the densities of
the random vectors $\left(B_r \mod(2\pi),\displaystyle
\int_0^r\sin(B_s)ds\right)
$  on $\bbS\times\mathbb{R}$ , for any starting points $a\in \bbS$, $r\in[1,2]$, are uniformly bounded. As the densities of $B_r\mod(2\pi)$   on $\bbS$ , for $a\in \bbS$, $r\in[1,2]$, are uniformly lower-bounded, the claim is obtained from conditional to $B_r=b\in \bbS$.

Consequently, 
by \cite{Bobkov:23} formula (1.7), $\displaystyle \frac1{\sqrt{n}}(X_1+\cdots+X_n)$ has a density bounded by $\sqrt{e}M$. 
Since the ratio $\frac{\sqrt n} {\sqrt t}$ is bounded from below, one infers that conditionaly on the event, $(B_0,\dots,B_{n-1}, B_t)=(b_0,\dots, b_{n-1}, b_n) \mod 2\pi$, $J(\theta,t)$ has also a bounded density and with a constant independent of the value of $(b_0,\dots b_{n})\mod 2\pi$.
By integrating this bound, one deduces that 
\begin{equation}\label{E:4.21}
p_{\theta,b,t}(x)\le M 
\end{equation}
for some constant $M$ independent of $\theta,b\in \R$ and $t\geq \frac{t_0}{2}=2$. 

Now to obtain the gaussian decay of the density, we will split $J$ in only two pieces.
We shall use the following  direct application  of Lemma 2.1 in \cite{Bobkov:23}. 
Let $X_1$ and $X_2$ two independent random variables with bounded densities:
\[
p_{X_1}\leq M_1  \textrm{ and } p_{X_2}\leq M_2 
\]
and with (sub-Gaussian) Laplace transform:
\[
\E [ e^{tX_1}]  \leq e^{V_1(t)}= C_1 e^{\frac{\sigma_1^2 t^2}{2}} \textrm{ and } \E [ e^{tX_2}]  \leq e^{V_2(t)}= C_2 e^{\frac{\sigma_2^2 t^2}{2}}. 
\]
Then Lemma 2.1 in \cite{Bobkov:23} applied with $n=2$ and $r_1=r_2=2$ gives 
\[
p_{X_1 +X_2} (x) \leq \sqrt {M_1 M_2} e^{-W^*(x)} 
\]
where  $W^*$  is the Legendre transform of 
\[ W(t) := \frac{1}{2} (V_1(2t) + V_2(2 t))= (\sigma_1 ^2+ \sigma_2^2) \frac{(2t)^2}{2} {+} \frac 1 2 \log C_1  {+} \frac 1 2 \log C_2.\]

Computing this Legendre transform gives:
\[
p_{X_1 +X_2} (x)\leq \sqrt {M_1 M_2} \sqrt {C_1 C_2}   e^{  - \frac{x^2} { 8 (\sigma_1^2+ \sigma_2^2)}}.
\]

Let $t\ge t_0=4$, we thus write:
\[ 
J(\theta,t)=\frac1{\sqrt t} \int_0^t \sin \left( B_s\right) ds 
       =X_1 + X_2
 \]   
 with \[
 X_1 =\frac1{\sqrt t} \int_0^{t/2} \sin \left( B_s\right) ds \textrm{ and }
 X_2 =\frac1{\sqrt t} \int_{t/2}^t \sin \left( B_s\right) ds.
 \]
 We now condition on the event $(B_0,B_{ t/ 2},B_t)=(b_0,b_{1/2},b_1)\mod 2\pi$ with $b_0=\theta,b_1=b$ and $b_{1/2} \in[0,2\pi]$. Under this conditioning, $X_1$ and $X_2$  become independent and 
  have the  same law as $ \frac 1 {\sqrt 2} J(a,b,\frac{t}{2})$ for some $a,b \in \R$. In particular, by \eqref{E:4.21}, they have bounded density with a constant independent of $(b_0,b_{1/2},b_1)\in \R^2$.
  Let us now prove that there exists $C,\sigma>0 $ such that 
\begin{equation}
    \label{E15}
    \esp\left[e^{\lambda X_j}\vert (B_0, B_{t/2}, B_t)= (b_0,b_{1/2},b_1) \mod   2\pi\right]\le C e^{\frac{\sigma^2\lambda^2}{2}},\qquad j=1,2,\ t\ge 4, \lambda \in \R.
\end{equation}

By symmetry of $X_j$, $j=1,2$, it is enough to prove it for $\lambda\geq0$. We work only for $j=1$. By Cauchy-Schwarz and cutting again this integral in two, one has 
\begin{align*}
 &\esp\left[e^{\lambda X_1}\vert  (B_0, B_{t/2})= (b_0,b_{1/2}){\rm mod} 2\pi\right]\\&
 \le \esp\left[e^{\frac{2\lambda}{\sqrt{t}}\int_0^{t/4}\sin(B_s) ds}\vert (B_0, B_{t/2})= (b_0,b_{1/2}){\rm mod} 2\pi \right]^{1/2}\\&\times \esp\left[e^{\frac{2\lambda}{\sqrt{t}}\int_{t/4}^{t/2}\sin(B_s) ds}\vert (B_0, B_{t/2})=(b_0,b_{1/2}){\rm mod} 2\pi\right]^{1/2}.
 \end{align*}
 
It is in fact sufficient to establish~\eqref{E15} for  the first term. It will then also holds for the second term by time reversibility.

Now (conditioning only on $B_0=b_0$), by~\eqref{eq:sqrt-dt} we have for $s\in [0,t/4]$
\begin{equation}
    \label{E18}
    \int_0^s \sin(B_r)dr=2\left(\sin(b_0)-\sin(B_s)+\int_0^s\cos(B_r)\, dB_r\right).
\end{equation}
Letting $(\alpha,\gamma)=(b_0,b_{1/2})\ {\rm mod}\ 2\pi$  and conditioning also with respect to $B_{t/2}=\gamma\ {\rm mod} \ 2\pi$ boils down to replacing $dB_r$ by $d\beta_r+b(r,\gamma)\, dr$ with $\beta_r$ a Brownian motion and $\displaystyle b(r,\gamma)=\nabla\ln p_{\Se}\left(t/2-r,\cdot,\gamma\right)(B_r)$,~$p_{\Se}$ being the heat kernel on the circle. This immediately yields $\displaystyle \left|b(r,\gamma)\right|\le \frac{C_2}{t/2-r}$  for some $C_2>0$ independent of $\gamma$. 

In particular for $0\leq r\leq t/4$, $\left|b(r,\gamma)\right|\le \frac{4C_2}{t}$.
Then for $t\ge 1$, denoting by $(B_s^\gamma)_{0\le s\le t/2}$ the Brownian motion started at $\alpha$ and conditioned on $B_{t/2}=\gamma\ {\rm mod} \ 2\pi$: 
\begin{align*}
  &\esp\left[e^{\frac{\lambda}{\sqrt{t}}\int_0^{t/4}\sin(B_s) ds}\vert  (B_0, B_{t/2})=(\alpha,\gamma) {\rm mod}\  2\pi\right]\\
    &=\esp\left[e^{\frac{2\lambda}{\sqrt{t}}\left(\sin(\alpha)-\sin(B_{t/4}^\gamma)+\int_0^{t/4}(\cos(B_s^\gamma)\,d\beta_s+b(s,\gamma)\, ds)\right)}\right]\\
    &\le e^{\frac{\lambda}{\sqrt t}(4+2C_2)}\esp\left[e^{\frac{2\lambda}{\sqrt{t}}\int_0^{t/4}\cos(B_s^\gamma)\,d\beta_s}\right]\le  e^{\lambda(4+2C_2)}e^{\lambda^2/2}
\end{align*}
where for the last inequality we used the fact that $\int_0^{t/4}\cos(\theta+B_s^\gamma)\,d\beta_s=W_{\int_0^{t/4}\cos^2(\theta+B_s^\gamma)ds}$
for some Brownian motion $(W_s)$, so by convexity of exponential 
\[
\esp\left[e^{\frac{2\lambda}{\sqrt{t}}\int_0^{t/4}\cos(\theta+B_s^\gamma)\,d\beta_s}\right]\le \esp\left[e^{\frac{2\lambda}{\sqrt{t}}W_{t/4}}\right]=e^{\lambda^2/2}.
\]
 Thus Inequality \eqref{E15} holds with some constants independent of $(b_0,b_{1/2},b_1)$ and $t \geq 4$. The  above version of Lemma 2.1 in \cite{Bobkov:23} gives the desired gaussian decay for the conditional law of $J(\theta,b,t)$. By integration and since again, the estimate is independent of $(b_0,b_{1/2},b_1)$, the desired gaussian  decay \eqref{E7} follows
for $p_{\theta,b,t}$ itself.
\end{proof}

To prove Lemma \ref{L3}, we need:
      
        \begin{lemma}
        \label{L2}
       { Let $(B_t)_{t\geq 0}$ be a standard Brownian motion.} For any $\alpha \in (0,1)$ and any $a, b\in \R$ such that $|a|\le |b|$, for all $t> 0$
        \begin{equation}
        \label{E21}
        \E\left[ \left| b + B_t\right|^{-\alpha} \right]
        \leq\E\left[ \left| a + B_t\right|^{-\alpha} \right]
        \leq \E\left[ \left| B_t\right|^{-\alpha} \right].
        \end{equation}
        \end{lemma}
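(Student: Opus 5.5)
We reduce to comparing the function $c\mapsto \E\left[|c+B_t|^{-\alpha}\right]$ along $c\in\R$, and show it is even and nonincreasing in $|c|$. Both inequalities in \eqref{E21} then follow: the left one from $|a|\le|b|$, and the right one from $0\le|a|$.

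\emph{Step 1: finiteness and an explicit formula.} With $\phi_t(y)=(2\pi t)^{-1/2}e^{-y^2/(2t)}$ we write
\[
F(c):=\E\left[|c+B_t|^{-\alpha}\right]=\int_\R |y|^{-\alpha}\,\phi_t(y-c)\,dy=(|\cdot|^{-\alpha}*\phi_t)(c).
\]
Since $\alpha<1$, the function $|y|^{-\alpha}$ is locally integrable, and it is bounded by $1$ away from $[-1,1]$. Hence $F(c)\le \int_{-1}^1|y|^{-\alpha}\,dy\,\|\phi_t\|_\infty+1<\infty$, uniformly in $c$. Evenness of $F$ follows from the symmetry of both $|y|^{-\alpha}$ and $\phi_t$.

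\emph{Step 2: monotonicity via rearrangement.} The functions $|y|^{-\alpha}$ and $\phi_t$ are both symmetric and decreasing in $|y|$. The convolution of two such functions is again symmetric and decreasing. One way to see this is through the layer cake representation
\[
|y|^{-\alpha}=\int_0^\infty \bone_{\{|y|<s^{-1/\alpha}\}}\,ds ,
\]
which gives
\[
F(c)=\int_0^\infty \int_{-r(s)}^{r(s)}\phi_t(y-c)\,dy\,ds,\qquad r(s)=s^{-1/\alpha}.
\]
It is therefore enough to show that $c\mapsto \int_{-r}^{r}\phi_t(y-c)\,dy=\PP(|c+B_t|<r)$ is nonincreasing on $c\ge 0$. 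Its derivative is $\phi_t(-r-c)-\phi_t(r-c)=\phi_t(r+c)-\phi_t(r-c)\le 0$, because $|r+c|\ge|r-c|$ when $r,c\ge0$. Integrating in $s$, using Tonelli since everything is nonnegative, yields that $F$ is nonincreasing on $[0,\infty)$.

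\emph{Step 3: conclusion.} Since $F$ is even and nonincreasing in $|c|$, we get $F(b)\le F(a)\le F(0)$ whenever $|a|\le|b|$, which is \eqref{E21}. The main obstacle is only to justify exchanging derivative and integral, or the limits, in Step 2. The layer cake formulation avoids this, because the monotonicity comparison is made pointwise in $s$ for the elementary Gaussian probabilities $\PP(|c+B_t|<r)$, and Tonelli then passes it to $F$.
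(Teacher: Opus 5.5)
Your proof is correct, but it takes a different route from the paper.

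The paper argues pathwise. Assuming $0\le a\le b$, it reflects $B$ after the first hitting time of $-\frac{a+b}{2}$. This produces a second Brownian motion $B'$ with $|b+B_t|\ge |a+B'_t|$ for all $t\ge 0$ simultaneously. Applying the decreasing function $|\cdot|^{-\alpha}$ and taking expectations gives the left inequality. The right inequality is the special case where $a$ plays the role of $b$ and $0$ that of $a$.

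You instead fix $t$ and use only the Gaussian law of $c+B_t$. The layer cake formula reduces everything to showing that $c\mapsto \PP(|c+B_t|<r)$ is nonincreasing on $[0,\infty)$ for each $r$. You check this with the explicit derivative $\phi_t(r+c)-\phi_t(r-c)\le 0$.

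Both arguments establish the same intermediate fact: $|c+B_t|$ is stochastically nondecreasing in $|c|$. The coupling realises this ordering almost surely. Your computation verifies it on distribution functions, and your Tonelli step is the analytic counterpart of taking expectations of the pathwise inequality.

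The coupling needs no computation, gives the ordering uniformly in $t$, and applies to any nonincreasing function of $|\cdot|$. Your version is more elementary, since it needs no strong Markov property, and it checks finiteness of the moments explicitly. Moreover, your monotonicity step uses only that $\phi_t$ is even and nonincreasing on $[0,\infty)$. So it adapts at once to any symmetric unimodal density, which is the extension the paper only sketches with a picture in the remark after the lemma.
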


        \begin{proof}
        
        Since for $a\in \R$ we have \[\E\left[ \left| a + B_t\right|^{-\alpha} \right]=\E\left[ \left| a - B_t\right|^{-\alpha} \right]=\E\left[ \left| -a + B_t\right|^{-\alpha} \right]\] by symmetry of the law of $B_t$, we can assume that $0\le a\le b$. 
        Letting \[\tau(a,b)=\inf\left\{t\ge 0, \ B_t=-\frac{b+a}2\right\}\] we define, {using strong Markov property}, a new Brownian motion $(B_t')_{t\ge 0}$ with $B_t'=B_t$ if $t\le \tau(a,b)$ and $B_t'=- b-a -B_t$ if $t\ge \tau(a,b)$ and consider the two Brownian motions $B'_t+a$ and $B_t+b$.
     
        For $t\le \tau(a,b)$ we have $B_t\ge -\frac{b+a}2$ implying \[B_t+b\ge\frac{b-a}2\quad\hbox{and}\quad B_t'+a=B_t+a\ge -\frac{b-a}2 \]
        which, together with $ B_t+b\ge B_t+a$, yields $|B_t+b|\ge |B'_t+a|$.
        
        For $t\ge \tau(a,b)$ we have $B_t'+a=-(B_t+b)$ implying $|B_t+b|= |B'_t+a|$. So for all $t\ge 0$, $|B_t+b|\ge |B'_t+a|$.
        We conclude with the equality $\E\left[ \left| a + B_t\right|^{-\alpha} \right]=\E\left[ \left| a + B_t'\right|^{-\alpha} \right]$.
        \end{proof}
        
         \begin{remark}
          Lemma \ref{L2} is not limited to the Gaussian measures. It holds for any translation of distributions with 
        unimodal symmetric density. {This can be shown by coupling two distributions where  the second one is obtained from the first one by a translation by $b-a$. A sketch of proof is given by the picture below. The green part is translated and the red/blue part are reflected. }
        
{\centering \includegraphics[width=.75\linewidth]{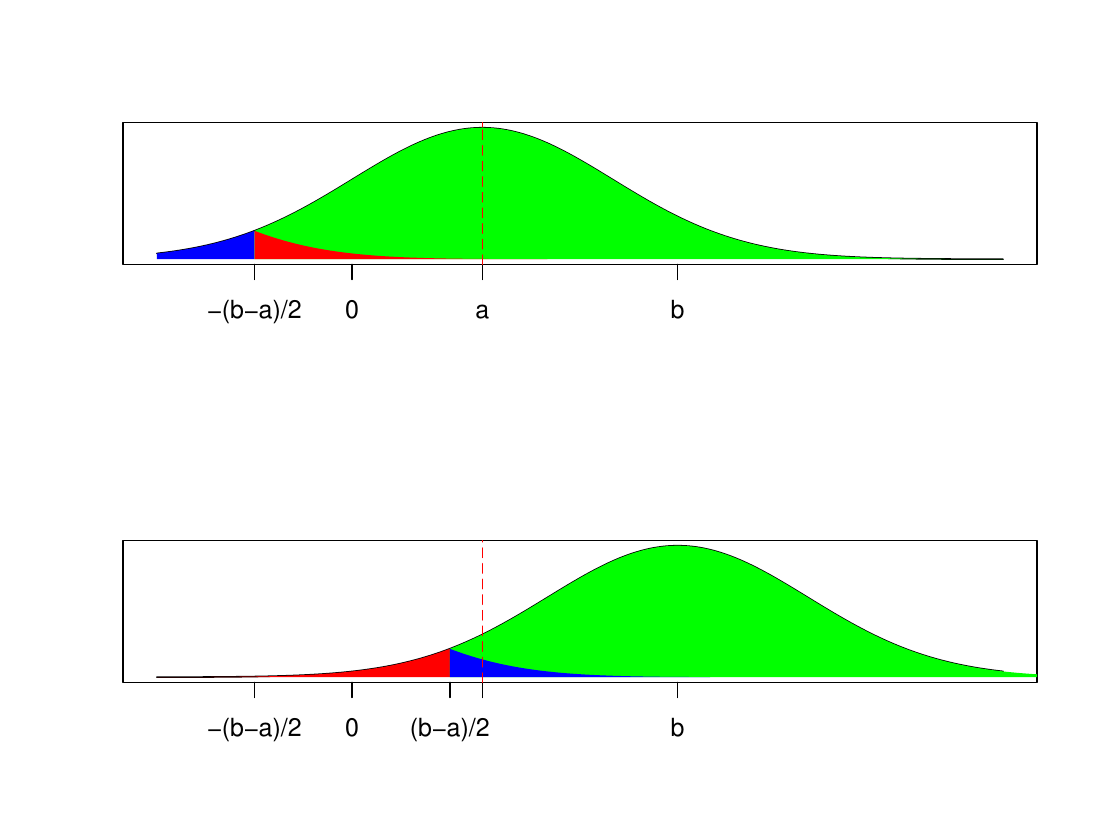} 

}
        \end{remark}
        
        We can finally give the proof of Lemma \ref{L3}.
   \begin{proof}[Proof of Lemma \ref{L3}]
 Note that when $\sin \varphi=1$, we only need the density of $Y$ to be bounded and $0\leq\alpha<1$. Indeed, we directly get:
    \begin{equation}\label{Esinphi}
\esp\left[|Y|^{-\alpha}\right]\le 1+\int_0^1x^{-\alpha}M dx=1+\frac{M}{1-\alpha}.
\end{equation}
For the general case, as $Y$ has a sub-Gaussian density, 
        \begin{equation}\label{EsubG}
     \esp\left[\left|a\cos\varphi+\sin\varphi Y\right|^{-\alpha}\right]\le
     M\sigma\sqrt{2\pi}\esp\left[|a\cos\varphi+\sin\varphi Z|^{-\alpha}\right]]
        \end{equation}
   with $Z\sim \SN(0,\sigma^2)$. From Lemma~\ref{L2}, we know that $\E\left[ \left| a + Z\right|^{-\alpha} \right]$ is nonincreasing in $|a|$. As $a\ge 1$ in \eqref{EsubG}:
   \[\esp\left[\left|a\cos\varphi+\sin\varphi Y\right|^{-\alpha}\right]\le
     M\sigma\sqrt{2\pi}\esp\left[\left|\cos\varphi U+\sin\varphi Z\right|^{-\alpha}\right]
        \]
        where $U$ has uniform law in $[-1,1]$ and is independent of $Z$.
     Now using the inequality $\|f_{X+Y}\|_\infty\le \min(\|f_X\|_\infty\|f_Y\|_1, \|f_Y\|_\infty\|f_X\|_1)=\min(\|f_X\|_\infty, \|f_Y\|_\infty)$ for two independent random variables $X$ and $Y$ with densities $f_X$ and $f_Y$, we get $\displaystyle\|f_{\cos\varphi U+\sin\varphi Z}\|_\infty\le \min\left(\frac{\|f_U\|_\infty}{|\cos\varphi|},\frac{\|f_Z\|_\infty}{|\sin\varphi|}\right)\le \sqrt{2} 
     \left(\frac{1}{2}+ \frac{1}{\sqrt {2\pi} \sigma} \right)$ and this allows us to conclude similarly as in \eqref{Esinphi}.    
            \end{proof}

\begin{proof}[Proof of Lemma \ref{LY}]
For a given $(t_1,t_2\ldots,t_N)\in \prod\limits_{k=1}^N \left[\frac{(k-1/2)T}{N},\frac{kT}{N}\right]$ and $t_0=0$
      we also denote for $1\leq k\leq N$
      \[
      J_k= \frac{1}{\sqrt{t_k-t_{k-1} }}\int_{t_{k-1}}^{t_k} \sin \left(\theta + B_s\right) ds .
      \]
      Our first goal is to bound the density  of the vector $J=(J_1,\dots J_N)$.
      As above, we first consider the conditional density of $J$ with respect to 
      $B_{t_1}=b_1 \mod2\pi,\dots ,  B_{t_N}=b_N\mod2\pi$ for some $b=(b_1, \dots b_N) \in \R^N$. Since $T\geq 1$,
      on this conditioning  the coordinates of the  vector $J$ are  independent and,  by Inequality \eqref{E7} of  Lemma \ref{L1} with {$t_0=\frac{1}{2N}$}, they have sub-Gaussian densities on $\R$, uniform with respect to  $b=(b_1, \dots, b_N)$ and $(t_1,\dots,t_N)$.  
      As a consequence, the conditional density of the random vector  $J$ is bounded by:
      \begin{equation}\label{EfJb}
      f_{J|b\mod(2\pi)} (x) \leq M^N \exp\left(- \frac{x_1^2 +\dots + x_N^2}{2 \sigma^2}\right).
      \end{equation}
Similarly as above,  since the upper bound  does not depend on $b$, it can be integrated in $b$  and  the upper bound in \eqref{EfJb}  is also valid  for the density of the vector $J$ itself.

     We notice now that  $Y$ is obtained as a  linear transformation of $J$, namely
      \[
      {Y_k=}Y(t_k)= \frac{1}{\sqrt {t_k}} 
      \left( \sqrt{t_1} J_1 + \sqrt{t_2 -t_1} J_2+ \dots + \sqrt{t_k -t_{k-1}} J_k \right)
      \]
      and 
      \[
      J_k= \frac{\sqrt{t_k}} {\sqrt{t_k -t_{k-1}}}   Y_k
      - \frac{\sqrt{t_{k-1}}}{\sqrt{t_k -t_{k-1}}} Y_{k-1}{\delta_{k\geq 2}}.
      \]
      We write $Y=AJ$ and $J=A^{-1} Y$, we have
      \begin{align*}
      f_Y (y) &= \frac{1}{\det(A)} f_J(A^{-1}y) \\
             &\leq \frac{1}{\det(A)} M^N \exp \left(- \frac{ \Vert A^{-1} y \Vert^2}{2\sigma^2} \right)
      \end{align*}
      and the upper bound of the density of $Y$ \eqref{EfY} holds since
    \begin{align*}
     \det(A^{-1}) &= \prod_{k=1}^N \frac{\sqrt{t_k}} {\sqrt{t_k -t_{k-1}}} \\
      &\leq \prod_{k=1}^N \frac{\sqrt{k \frac{T}{N}}} {\sqrt{(k-1/2) \frac{T}{N} -(k-1)\frac{T}{N}}}
 = \prod_{k=1}^N \sqrt{2k}
     \end{align*}
     and since the two norms $\Vert \cdot \Vert$ and 
     $\Vert  A^{-1} \cdot \Vert$ in finite dimension are equivalent with, by continuity and compactness, some constants that may be taken independently of  $(t_1,t_2\ldots,t_N)\in \prod_{k=1}^N \left[\frac{(k-1/2)T}{N},\frac{kT}{N}\right]$.
     
Now,   denoting by $Z$ a random vector  with law $ \mathcal N(0, \sigma_N  Id_N)$, one has 
\begin{align*}
    \esp\left[\prod_{k=1}^N  F_k(Y(t_k))\right] 
    &\leq C_N (2\pi \sigma_N^{{2}})^{\frac{N}{2}}
    \esp\left[\prod_{k=1}^N  F_k(Z_k)\right]\\ & = C_N (2\pi \sigma_N^{{2}})^{\frac{N}{2}} 
     \prod_{k=1}^N \esp\left[  F_k(Z_k)\right].
\end{align*}
By  Lemma \ref{L3}, this last quantity is finite.   
The proof of Lemma \ref{LY} is complete.  
\end{proof}

\subsection{Application to the dual}\label{Subsec AsyDual}

{In this last subsection, we use the preceding results to derive the long-time behaviour of the Malliavin dual.}

\begin{proposition}
\label{P5}
Let $q\in[1,\infty)$. There exists $C_q>0$ such that for all $T\geq \sqrt{2}$, $x\in \R\times \CC$ and $v\in \R\times \CC$ ,   the dual $\delta  h$ of the solution $ h=  h(x,v,T)$ to Equation~\eqref{E39} satisfies 
\begin{equation}
    \label{E49bis}
    \esp[|\delta  h|^q]^{\frac{1}{q}}\le C_q\left(|v_\R|+\frac1{\sqrt T}|v_\CC|\right),\qquad v=\begin{pmatrix}
v_\R\\v_\CC
\end{pmatrix}\in \R\times \CC.
\end{equation}
\end{proposition}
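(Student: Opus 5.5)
Starting from the explicit formula \eqref{EDual} of Theorem~\ref{P4bis}, we first reduce to $u=0$: the rotation $R(-u)$ preserves $|v_\CC|$, and $\cC^x(T)$ depends on $x$ only through that conjugation. We then rewrite every quantity in the rescaled variables $\tilde B_s=T^{-1/2}B_{Ts}$. Put $D_T=\mathrm{diag}(\sqrt T,T,T)$, so that $\cC(T)^{-1}=D_T^{-1}\tilde\cC(T)^{-1}D_T^{-1}$ by \eqref{EtildeC}. The key observation is that
\[
D_T^{-1}v=\begin{pmatrix}v_\R/\sqrt T\\ v_\CC/T\end{pmatrix}=\frac1{\sqrt T}\begin{pmatrix}v_\R\\ v_\CC/\sqrt T\end{pmatrix}.
\]
This produces exactly the weight $|v_\R|+T^{-1/2}|v_\CC|$ appearing in \eqref{E49bis}, together with a prefactor $T^{-1/2}$. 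It therefore suffices to show that the remaining random factors, after being multiplied by $D_T^{-1}$ on the appropriate side, are of order $\sqrt T$ in every $L^q$, uniformly in $T\ge\sqrt2$.

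\textbf{First term.} Consider $w_1:=\big(B_T,\,-i\int_0^Te^{iB_t}(B_T-B_t)dt\big)$. Its first coordinate is $\sqrt T\tilde B_1$. Its second coordinate equals $T^{3/2}\int_0^1 e^{i\sqrt T\tilde B_s}(\tilde B_1-\tilde B_s)ds$, and after the Itô transformation of Proposition~\ref{P3.1} this is of order $T$. So $D_T^{-1}w_1=(\tilde B_1,\,O(T^{1/2})\cdot T^{-1}\cdot T)$; to get a uniform bound we need it to be $O(\sqrt T)$ in $L^q$. By Hölder's inequality,
\[
\|w_1^*\cC^{-1}v\|_q\le \|D_T^{-1}w_1\|_{3q}\,\|\tilde\cC^{-1}\|_{3q}\,\|D_T^{-1}v\|.
\]
Here $\|\tilde\cC(T)^{-1}\|_{3q}\le C$ by Proposition~\ref{P2}. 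The coordinate $\sqrt T\int_0^1e^{i\sqrt T\tilde B_s}(\tilde B_1-\tilde B_s)ds$ is $O(1)$ in $L^q$. To prove this, we treat the random weight $\tilde B_1-\tilde B_s$ by conditioning on $\tilde B_1$ and applying Lemma~\ref{L3.2}-type moment bounds. Alternatively, we expand $\tilde B_1-\tilde B_s$ and use Proposition~\ref{P3.1} with Burkholder--Davis--Gundy for the resulting stochastic integral $\int e^{i\sqrt T\tilde B}(\tilde B_1-\tilde B_s)d\tilde B_s$, which is anticipating; this is why splitting as $\tilde B_1\int(\cdot)-\int\tilde B_s(\cdot)$ is cleaner. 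Hence $D_T^{-1}w_1=O(1)$ in $L^q$. Combined with the prefactor $T^{-1/2}$ (which we do not even need), this gives the bound.

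\textbf{Second term.} The double integral $\int_0^T\int_0^t w_2(s)^*\cC^{-1}\SK(t,s)\cC^{-1}v\,ds\,dt$ has $w_2(s)=(s,\,i\int_0^se^{iB_\rho}(\rho-s)d\rho)$. We rewrite it as $(D_T^{-1}\mathcal W)^*\tilde\cC^{-1}(D_T^{-1}\mathcal K D_T^{-1})\tilde\cC^{-1}D_T^{-1}v$, where $\mathcal K,\mathcal W$ come from exchanging the order of integration. Concretely, the vector multiplying $\SK$ must be integrated first in $t\in[s,T]$. Each entry of $D_T^{-1}(\cdot)D_T^{-1}$ is, after the time change, a multiple of one of the oscillating integrals or iterated integrals \eqref{E42bis}--\eqref{E42ter}, with polynomial weights in $s,t$. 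Lemma~\ref{L3.2} (the $k=1,2$ cases) then gives uniform $L^q$ bounds after the correct powers of $T$ are matched. The bookkeeping is as follows: $\SK$ has entries $e^{iB_s}$ (order $1$, which gains $T^{1/2}$ after integration in $ds$ through oscillation) and $e^{iB_s}\int_0^te^{iB_r}dr$ (order $T^{1/2}$ after oscillation). We then check, entry by entry, that the total power of $T$ is at most $T^{1/2}$, which the prefactor $T^{-1/2}$ compensates. Finally, Hölder's inequality with $\|\tilde\cC^{-1}\|_{5q}$ twice (Proposition~\ref{P2}) concludes.

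\textbf{Main obstacle.} The main obstacle is this power counting for the second term. Several entries naively appear to carry an extra factor $T^{1/2}$, for instance the $(0,0)$-component paired with $s$. We expect these to cancel against the zero in the upper-left entry of $\SK$, and against the oscillation gains provided by Lemma~\ref{L3.2}. For the terms where the weights are random but adapted, namely the iterated integrals $\int_0^t e^{iB_r}dr\,e^{-iB_s}$, we would use Proposition~\ref{P3.1} twice, as in the proof of Theorem~\ref{T1}, to identify them as $O(T)$ martingale terms plus $O(T)$ drift terms, each bounded in $L^q$.
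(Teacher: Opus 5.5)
Your proposal is essentially the paper's proof: conjugating by $D_T=\mathrm{diag}(\sqrt T,T,T)$ turns \eqref{EDual} into $\tilde N(T)$ acting on the rotated vector $v_T$ of \eqref{E51bis}, and the bound then follows from Proposition~\ref{P2}, Lemma~\ref{L3.2} and H\"older's inequality. (For the first term the crude bound $\sqrt T\int_0^1|\tilde B_1-\tilde B_s|\,ds$ already gives the needed $O(\sqrt T)$, so no oscillation argument is required there.) One thing needs repair: the double integral does not factor as $\mathcal W^*\tilde\cC^{-1}\mathcal K\tilde\cC^{-1}$, because $w_2(s)$ and $\SK(t,s)$ share the variable $s$. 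The paper instead splits along the coordinates of the weight vector, as in \eqref{EN2bis}, and the power counting you leave open is settled exactly by the identities $D_T^{-1}w_2(Ts)=\sqrt T\,\cdot$(the weight vector in \eqref{EN2}) and $D_T^{-1}\SK(Tt,Ts)D_T^{-1}=T^{-2}\tilde\SK_T(t,s)$ with $\tilde\SK_T$ as in \eqref{E50bis}. After that, each entry of $\tilde N_1,\tilde N_2,\tilde N_3$ is a multiple oscillating integral with a deterministic bounded weight and up to three oscillating factors, so Lemma~\ref{L3.2} is needed with $k\le 3$, not only $k=1,2$.
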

 
\begin{proof}
Recall that an explicit expression of $\delta h$ has been computed in Theorem \ref{P4bis} depending on the chosen flow for the kinetic Brownian motion \eqref{E28}. 
Our task is to find an asymptotic for $\delta h$. Following Section~\ref{S3} and Proposition~\ref{P2}, we proceed with renormalisations and use the obtained bounds and convergences. 

Define for $0\leq t,s\leq 1$
\begin{equation}\label{E50bis}
    \tilde\SK_T(t,s):=\begin{pmatrix}
     0 & \sqrt{T}{e^{i\sqrt{T}\tilde B_s}}^{*}\\
    \sqrt{T}e^{i\sqrt{T}\tilde B_s} & -Te^{i\sqrt{T}\tilde B_s}\left(i \int_0^te^{i\sqrt{T}\tilde B_r}dr\right)^{*}-Ti \int_0^te^{i\sqrt{T}\tilde B_r}dr{e^{i\sqrt{T}\tilde B_s}}^{*}
    \end{pmatrix}
    \end{equation}
\begin{equation}
  \text{and }   \label{E51bis} v_T:=\begin{pmatrix}
 v_\R   \\
 \frac{v_\CC}{\sqrt T}
 \end{pmatrix}.
 \end{equation}
  
 Then renormalising \eqref{EDual} we get $-\delta h=\tilde N(T)\begin{pmatrix}
 1 & 0\\
 0 & R(-u)
\end{pmatrix}v_T$ with:
 \begin{align}
  \label{EN1}
  \tilde N(T)&=\begin{pmatrix} \frac{1}{\sqrt{T}}\tilde B_1\\  -i\int_0^1 e^{i\sqrt{T}\tilde B_t}(\tilde B_1-\tilde B_t) dt\end{pmatrix}^{*}\tilde\cC(T)^{-1}\\
   \label{EN2} 
   &+\int_0^1\int_0^t \begin{pmatrix} s\\  \sqrt{T}i\int_0^s e^{i\sqrt{T}\tilde B_{\rho}}(\rho-s)d\rho\end{pmatrix}^{*}\tilde\cC(T)^{-1}\tilde\SK_T(t,s)\tilde\cC(T)^{-1}\,dsdt  
  \end{align}
  and thus by Cauchy-Schwarz inequality in $\R^3$, 
  \[
  |\delta h(\omega)|\le \|\tilde N(T)(\omega)\|\, \|v_T\|,
  \]
  yielding
  \begin{equation}\label{EN2ter}
  \esp[|\delta h|^q]^{\frac{1}{q}}\le  \esp[\|\tilde N(T)\|^q]^{\frac{1}{q}}\, \|v_T\|.
  \end{equation}
Since by Proposition \ref{P2}, the moments of the matrix   $\tilde\cC(T)^{-1}$ are uniformly bounded,
it is direct to see that  the vector given by \eqref{EN1} admits uniformly (with respect to $T\geq \sqrt{2}$) bounded moments of order $q\geq 1$.  
  
  To deal more easily with  the  second vector given by \eqref{EN2},  we write it as  
  \begin{equation}\label{EN2bis}
    \begin{pmatrix}
     1\\ 0\\ 0
    \end{pmatrix}^*\tilde\cC(T)^{-1}\tilde N_1(T)\tilde\cC(T)^{-1}+\begin{pmatrix}
     0\\ 1\\ 0
    \end{pmatrix}^*\tilde\cC(T)^{-1}\tilde N_2(T)\tilde\cC(T)^{-1}+\begin{pmatrix}
     0\\ 0\\ 1
    \end{pmatrix}^*\tilde\cC(T)^{-1}\tilde N_3(T)\tilde\cC(T)^{-1}
 \end{equation} 
 where \begin{align*}
     \tilde N_1(T)&=\int_0^1\int_0^ts\tilde \SK_T(t,s)ds dt\\
     \tilde N_2(T)&=\int_0^1\int_0^t\sqrt{T}\int_0^s-\sin(\sqrt{T}\tilde B_{\rho})(\rho-s)d\rho\tilde\SK_T(t,s)dsdt\\
    \tilde N_3(T)&=\int_0^1\int_0^t\sqrt{T}\int_0^s\cos(\sqrt{T}\tilde B_{\rho})(\rho-s)d\rho\tilde\SK_T(t,s)dsdt.
 \end{align*}
 
 From Lemma \ref{L3.2}, all the coefficients of the matrices $\tilde N_1(T)$, $\tilde N_2(T)$ and $\tilde N_3(T)$ have uniformly  bounded moments of  any (even) order.  Using again Proposition \ref{P2} and  Hölder inequality, we  get
 $\displaystyle
 \sup\limits_{T\geq \sqrt{2}}\esp[\|\tilde N(T)\|^q]^{\frac{1}{q}}<+\infty$
 and thus by~\eqref{EN2ter}
 we can take $$C_p= \sup\limits_{T\geq \sqrt{2}}\esp[\|\tilde N(T)\|^q]^{\frac{1}{q}}$$ to obtain~\eqref{E49bis}.
 \end{proof}
 
 The proof of Proposition~\ref{P5} provides in fact a constant $C_p(T):= \esp[\|\tilde N(T)\|^q]^{\frac{1}{q}}$ for each $T\ge \sqrt{2}$. 
 The next proposition establishes convergence of $\esp[\|\tilde N(T)\|^q]^{\frac{1}{q}}$ as $T\to \infty$, which allows us to  make the estimate in Equation~\eqref{E49bis} precise and explicit in large time.
 \begin{proposition}
  \label{PtildeN}
  When $T$ tends to infinity, the vector $\tilde N(T)$ converges in law to $\tilde N$ defined by
  \begin{equation}\label{E5.8.2}
 \begin{split}
     \tilde N:=\sqrt{2}\int_0^1\int_0^t &\begin{pmatrix}
    s\\  -\sqrt{2}\int_0^s (\rho-s)d{\SW}_{\rho}\end{pmatrix}^{*}
    \cG^{-1}\\
    &
    \qquad\begin{pmatrix}
      0 & \left( d(i{\SW})_s\right)^{*}\\
       d(i{\SW})_s &\sqrt{2}\left( d(i{\SW})_s\left(\SW_t\right)^{*}+\SW_t\left( d(i{\SW})_s\right)^{*}\right)
     \end{pmatrix}\cG^{-1}dt.
     \end{split}
     \end{equation}
  Moreover, for all $q\ge1$, 
  \begin{equation}
      \label{E5.8.1}
     \esp[\|\tilde N(T)\|^q]^{\frac{1}{q}} \xrightarrow[T\to\infty]{}\esp[\|\tilde N\|^q]^{\frac{1}{q}}.
  \end{equation}
 \end{proposition}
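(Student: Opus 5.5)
Both assertions follow one pattern: exhibit $\tilde N(T)$ as a continuous functional of processes whose joint convergence in law is given by Theorem~\ref{T1}, together with $\tilde\cC(T)^{-1}$. Then upgrade to convergence of moments by uniform integrability, exactly as in the proof of \eqref{E2}.

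\textbf{Step 1: joint convergence of the ingredients.} By \eqref{EN1}--\eqref{EN2bis}, the ingredients of $\tilde N(T)$ are:
\begin{itemize}
\item[(a)] $\tilde B_1/\sqrt T$, which tends to $0$;
\item[(b)] the integral $\int_0^1 e^{i\sqrt T\tilde B_t}(\tilde B_1-\tilde B_t)\,dt$;
\item[(c)] the entries of $\tilde\cC(T)$;
\item[(d)] the matrices $\tilde N_j(T)$, $j=1,2,3$.
\end{itemize}

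For (b), I would write it as $\tilde B_1\int_0^1 e^{i\sqrt T\tilde B_t}dt-\int_0^1 \tilde B_t e^{i\sqrt T\tilde B_t}dt$. By Theorem~\ref{T:LFGN} (applied with the random $L^1$ integrand $\tilde B$, pathwise) this tends to $0$ a.s. So the first vector in \eqref{EN1} vanishes in the limit. This is consistent with the absence of such a term in \eqref{E5.8.2}.

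For (c) and (d), note the following.
\begin{itemize}
\item $\sqrt T\int_0^s e^{i\sqrt T \tilde B_\rho}(\rho-s)\,d\rho$ is a one-parameter family of the form $\sqrt T\int_0^1 g_s(\rho)e^{i\sqrt T\tilde B_\rho}d\rho$. Integrating by parts in $\rho$, it equals $-\int_0^s\big(\sqrt T\int_0^\rho e^{i\sqrt T\tilde B_r}dr\big)d\rho$, a continuous functional of the process $t\mapsto \sqrt T\int_0^t e^{i\sqrt T\tilde B}$.
\item The entries of $\tilde\SK_T(t,s)$ involve $\sqrt T e^{i\sqrt T\tilde B_s}\,ds$ and the products $T\big(\int_0^t e^{i\sqrt T\tilde B_r}dr\big)\,e^{i\sqrt T\tilde B_s}ds$.
\end{itemize}
After the $ds\,dt$ integrations, every entry of $\tilde N_j(T)$ is therefore of one of two kinds:
\begin{itemize}
\item single oscillating integrals $\sqrt T\int g\,e^{i\sqrt T\tilde B}$ with $C^1$ weights;
\item iterated integrals $T\int_0^\cdot(\int_0^s\ell_2 e^{i\sqrt T\tilde B_r}dr)\,\ell_1(s)(e^{i\sqrt T\tilde B_s})^*ds$ of the type in Theorem~\ref{T1}, with polynomial weights.
\end{itemize}
The factor $\int_0^s(\rho-s)\ldots$ sits inside and must first be reorganised by Fubini. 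Doing so turns $\int_0^1\!\int_0^t F(s)G(t,s)\,ds\,dt$ into $\int_0^1 F(s)\big(\int_s^1 G\,dt\big)ds$, and each weight becomes a polynomial in $s$. Applying Theorem~\ref{T1} to the finite family of weights so obtained gives joint convergence in law of all these quantities together with $\tilde\cC(T)$, the limit being expressed through $\SW$. The Stratonovich correction produced by Theorem~\ref{T1} must then be tracked; I expect it to combine so as to yield the stochastic integrals $d(i\SW)_s$ in \eqref{E5.8.2}.

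\textbf{Step 2: identifying $\tilde N$.} Matrix inversion is continuous on invertible matrices, and $\cG$ is a.s.\ invertible because $\E\|\cG^{-1}\|^p<\infty$ by Proposition~\ref{P2}. The continuous mapping theorem together with Slutsky's lemma then gives $\tilde N(T)\to\tilde N$ in law.

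The main obstacle is the identification itself. One must check that the limit of the iterated terms, after Fubini and the Itô/Stratonovich bookkeeping, collapses to the compact expression \eqref{E5.8.2}, with the factors $\sqrt2$ and the term $\SW_t(d(i\SW)_s)^*$ coming out correctly. I would handle this by computing formally with $\sqrt T e^{i\sqrt T\tilde B_s}ds\rightsquigarrow i\sqrt2\,d\SW_s$. I would then verify that every discrepancy is a bracket term of the form \eqref{E:g-strato}, which cancels against the corresponding correction.

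\textbf{Step 3: convergence of moments.} To obtain \eqref{E5.8.1}, it suffices to show $\sup_{T\ge\sqrt2}\E\|\tilde N(T)\|^{q'}<\infty$ for some $q'>q$. This is exactly what the proof of Proposition~\ref{P5} gives for every $q'$, using Proposition~\ref{P2}, Lemma~\ref{L3.2} and Hölder. Hence $\|\tilde N(T)\|^q$ is uniformly integrable, and Theorem~3.5 of \cite{Billingsley:99} combined with Step 2 yields $\E\|\tilde N(T)\|^q\to\E\|\tilde N\|^q$.
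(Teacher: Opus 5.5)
Your architecture is the same as the paper's: $\tilde N(T)$ is a continuous functional of the processes in \eqref{ET1} and of $\tilde\cC(T)$; Theorem~\ref{T1} and continuous mapping (with $\cG$ a.s.\ invertible) give the limit; uniform integrability from the bounds in the proof of Proposition~\ref{P5} and Theorem~3.5 of \cite{Billingsley:99} gives the moments. Your Step~3 is exactly the paper's. However, there is a genuine gap: Step~1 contains a manipulation that fails, and Step~2 leaves open precisely what the statement asserts.

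\textbf{The Fubini swap.} In the lower-right block of $\tilde\SK_T(t,s)$ the factor $\int_0^t e^{i\sqrt T\tilde B_r}dr$ depends on $t$. So $\int_s^1\tilde\SK_T(t,s)\,dt$ is not a polynomial weight: it contains $\int_0^1 e^{i\sqrt T\tilde B_r}(1-r\vee s)\,dr$. Write $1-r\vee s=(1-r)-(s-r)^+$ and multiply by the second component of your $F(s)$. This produces $T^{3/2}\int_0^1\big(\int_0^s e^{i\sqrt T\tilde B_\rho}(\rho-s)d\rho\big)e^{i\sqrt T\tilde B_s}\big(\int_0^s e^{i\sqrt T\tilde B_r}(s-r)dr\big)^*ds$, a triple oscillating integral that Theorem~\ref{T1} does not cover. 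Do the opposite and keep $dt$ outermost. Since $\int_0^t e^{i\sqrt T\tilde B_r}dr$ does not depend on $s$, it factors out of the $ds$-integral. After splitting $\rho-s=\rho\cdot 1+1\cdot(-s)$, every entry of $\tilde N_j(T)$ and of $\tilde\cC(T)$ has the form $\int_0^1\Phi\big(P_T(t),Q_T(t)\big)dt$ with $\Phi$ polynomial. Here $P_T$ is the second component of \eqref{ET1} for $g(s)=(1,s)$, and $Q_T$ is the third for $(\ell_2(r),\ell_1(s))\in\{(r,1),(1,-s)\}$; the $\sin/\cos$ combinations are entries of the $\CC\otimes\CC^*$-valued iterated integral. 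So the entries are $dt$-integrals of products of single and iterated integrals, not one kind or the other, and this is what makes continuous mapping apply.

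\textbf{The identification.} Theorem~\ref{T1} produces Stratonovich integrals, and you only expect the corrections to ``combine''. The missing observation, which the paper uses, is that they vanish. In the limit, the integrands against $d\SW_s$ are either $s$ or $\int_0^s(\rho-s)\,d\SW_\rho=-\int_0^s\SW_\rho\,d\rho$, which has finite variation; $\SW_t$ and $\cG^{-1}$ do not depend on $s$ and factor out. Concretely, the correction attached to $(\ell_2,\ell_1)$ in \eqref{ET1} is $\int_0^t\ell_1\ell_2\,ds\,\mathrm{Id}_2$ (cf.\ \eqref{E:g-strato}). For the pieces $(r,1)$ and $(1,-s)$ these are $\pm\int_0^t s\,ds\,\mathrm{Id}_2$, and they cancel because the kernel $\rho-s$ vanishes on the diagonal. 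Hence the formal substitution $\sqrt T e^{i\sqrt T\tilde B_s}ds\mapsto i\sqrt2\,d\SW_s$, read with Itô integrals, gives \eqref{E5.8.2} directly; there is no bracket term left to cancel.

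A smaller point concerns the first vector in \eqref{EN1}. Theorem~\ref{T:LFGN} is stated for deterministic $g$, so applying it to $g=\tilde B(\omega)$ needs a justification. The null set can be chosen uniformly over a countable dense subset of $L^1([0,1])$ and then extended via $\sup_t|\int_0^t(g-g')e^{i\sqrt T\tilde B_s}ds|\le\|g-g'\|_{L^1}$. The paper avoids this by using Itô's formula to bound the second moment of $\sqrt T\int_0^1e^{i\sqrt T\tilde B_t}(\tilde B_1-\tilde B_t)\,dt$. That gives $L^1$ convergence of the first vector in \eqref{EN1} to $0$, with rate $T^{-1/2}$.
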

 
 \begin{proof}
 We first show that the first term of $\tilde N(T)$ given  in  \eqref{EN1} converges in $L^1$ and thus in law to $0$. As $\tilde{\C}(T)^{-1}$ has bounded moments, it is enough to show that \[\sqrt{T}\int_0^1 e^{i\sqrt{T}\tilde B_t}(\tilde B_1-\tilde B_t) dt\] has bounded moments of order $2$. This ensues from the use of Itô formula: the first term $\sqrt{T}\int_0^1 e^{i\sqrt{T}\tilde B_t}dt\tilde B_1$ is dealt in~\eqref{eq:sqrt-dt}. We also apply Itô formula to the second term  $\sqrt{T}\int_0^1 e^{i\sqrt{T}\tilde B_t}\tilde B_t dt$, where an additional covariation appears:
  $$\frac{-2\tilde B_t}{\sqrt{T}} e^{i\sqrt T  \tilde B_t}   +  \frac{2}{\sqrt{T}} \int_0^t e^{i\sqrt T \tilde B_s} d\tilde B_s +2i  \int_0^t  e^{i\sqrt T \tilde B_s} ds    + 2i\int_0^t \tilde B_s e^{i\sqrt T \tilde B_s} d\tilde B_s.$$

Then,  similarly as  in the proof of Proposition \ref{PG}, since each  coefficient of \eqref{EN2} may be written as a continuous functional of some random vector processes of the form \eqref{ET1}, 
by using Theorem \ref{T1},   $\tilde N(T)$ converges in law to:
 \begin{equation}\label{E54.0}
 \begin{split}
     \tilde N:=
     \sqrt{2}\int_0^1\int_0^t &\begin{pmatrix}
    s\\  -\sqrt{2}\int_0^s (\rho-s)d\SW_{\rho}\end{pmatrix}^{*}
    \cG^{-1}   \\
    &\qquad
    \begin{pmatrix}
      0 & \left(\circ d(i{\SW})_s\right)^{*}   \\
      \circ d(i{\SW})_s &\sqrt{2}\left(\circ d(i{\SW})_s\left(\SW_t\right)^{*}+\SW_t\left(\circ d(i{\SW})_s\right)^{*}\right)
     \end{pmatrix}\cG^{-1}dt
     \end{split}
     \end{equation}
     where $\SW$ is the Brownian motion appearing in $\cG$ (Proposition \ref{PG}) and is independent of $\tilde B$.
     Noticing that $\int_0^s(\rho-s)d\SW_{\rho}=-\int_0^s\SW_{\rho}d\rho$, Itô and Stratonovich integrals are here the same and \eqref{E54.0} finally becomes:
      \begin{equation}\label{E54}
      \begin{split}
     \tilde N=\sqrt{2}\int_0^1\int_0^t &\begin{pmatrix}
    s\\  \sqrt{2}\int_0^s\SW_{\rho}d\rho\end{pmatrix}^{*}
    \cG^{-1}\\&\qquad\begin{pmatrix}
      0 & \left( d(i{\SW})_s\right)^{*}\\
       d(i{\SW})_s &\sqrt{2}\left( d(i{\SW})_s\left(\SW_t\right)^{*}+\SW_t\left( d(i{\SW})_s\right)^{*}\right)
     \end{pmatrix}\cG^{-1}dt.
     \end{split}
     \end{equation}

 Since $\tilde N(T)$ admits moments of any order uniformly bounded in $T\geq \sqrt{2}$, using again  Theorem 3.5. in \cite{Billingsley:99}, we then get $\lim\limits_{T\to\infty}\esp[\|\tilde N(T)\|^q]^{\frac{1}{q}}= \esp[\|\tilde N\|^q]^{\frac{1}{q}}$.
 \end{proof}

\section{Gradient estimates of the kinetic Brownian motion semigroup via Gaussian-space integration by parts}
\label{S4}

In this section we will obtain gradient estimates for the semigroup of the kinetic Brownian motion from the integration by parts formula in Proposition~\ref{P3}, by using the 
convergences in law obtained in Section~\ref{Subsec AsyDual}. Our {first} main result is the following theorem. It is stated for the model on $\R\times \CC$ but, as already explained before, it holds with the same rates and constants for the model on $\bbS\times \CC$.
\begin{theorem}
\label{T2bis}
For every $p\in(1,\infty]$ there exists a constant $C_p>0$ such that for $T$ large enough, $f : \R\times \CC\to [0,\infty]$ measurable { and bounded }, $x\in \R\times \CC$, $\displaystyle v=\begin{pmatrix}
v_\R\\v_\CC
\end{pmatrix}\in \R\times \CC$, 
\begin{equation}
    \label{E47}
    \left|\left(d_xP_Tf,v \right)\right|\le C_p\left(P_T|f|^p(x)\right)^{1/p}\left(\frac1{T^{1/4}}|v_\R|+\frac1{\sqrt T}|v_\CC|\right).
\end{equation}
{(in the case $p=\infty$, $\left(P_T|f|^p(x)\right)^{1/p}$ should be replaced by $\Vert f \Vert_\infty.$) }
\end{theorem}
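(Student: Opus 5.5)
We plan to treat the two components of $v$ separately, writing $v=(v_\R,0)+(0,v_\CC)$ and taking $q$ to be the conjugate exponent of $p$.

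\textbf{Vertical direction.} For $(0,v_\CC)$ the estimate follows directly from Theorem~\ref{P4bis} and Proposition~\ref{P5}. By Hölder,
\[
\left|\left(d_xP_Tf,(0,v_\CC)\right)\right|=\left|\E[f(X_T^x)\delta h]\right|\le (P_T|f|^p(x))^{1/p}\,\E[|\delta h|^q]^{1/q}\le C_q\,(P_T|f|^p(x))^{1/p}\frac{|v_\CC|}{\sqrt T}.
\]
The same bound, applied at any time $t\ge\sqrt2$ and any starting point, will be reused below as a vertical gradient estimate for $P_t$.

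\textbf{Horizontal direction.} For $(v_\R,0)$ Proposition~\ref{P5} only gives order one. The plan is to trade the horizontal derivative for a vertical one at an intermediate time $s\in[1,T/2]$, to be optimised later, using $P_T f=P_s(P_{T-s}f)$. Put $g=P_{T-s}f$, which is $\cC^1$ by Theorem~\ref{P4bis}. On $[0,s]$ we use the adapted control $h=-\frac{v_\R}{s}\bone_{[0,s]}$, which kills the $u$-component of the tangent flow at time $s$. A direct computation from the formulas for $J_s^x$ and $\cD X_s^x$ gives
\[
J_s^x(v_\R,0)+\cD X_s^x(h)=\Big(0,\ W_s\Big),\qquad W_s:=i e^{iu}v_\R\int_0^s e^{iB_t}\Big(1-\frac ts\Big)dt .
\]
Since $h$ is adapted and deterministic, its Malliavin dual is the Itô integral $\delta h=-v_\R B_s/s$. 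Hence, arguing as in Proposition~\ref{P3},
\[
\left(d_xP_Tf,(v_\R,0)\right)=\E\big[(d_{X_s^x}g,(0,W_s))\big]+v_\R\,\E\Big[g(X_s^x)\frac{B_s}{s}\Big].
\]

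\textbf{Bounding the two terms.} For the second term, Jensen gives $|g|\le (P_{T-s}|f|^p)^{1/p}$, and the Markov property then gives $\E[P_{T-s}|f|^p(X_s^x)]=P_T|f|^p(x)$. Combined with Hölder, the term is at most $C(P_T|f|^p(x))^{1/p}|v_\R|s^{-1/2}$. For the first term, apply the vertical estimate from the first step at time $T-s\ge\sqrt2$ and random point $X_s^x$:
\[
|(d_{X_s^x}g,(0,W_s))|\le C_q\,(P_{T-s}|f|^p(X_s^x))^{1/p}\,\frac{|W_s|}{\sqrt{T-s}}.
\]
After rescaling time to $[0,1]$, Lemma~\ref{L3.2} gives $\E[|W_s|^{q}]^{1/q}\le C|v_\R|\sqrt s$. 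Hölder and the Markov property again then bound this term by $C(P_T|f|^p(x))^{1/p}|v_\R|\sqrt{s/T}$. Choosing $s=\sqrt T$ balances $s^{-1/2}$ against $\sqrt{s/T}$ and yields the rate $T^{-1/4}$. The case $p=\infty$ is identical with $\|f\|_\infty$ in place of $(P_T|f|^p)^{1/p}$.

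\textbf{Main obstacle.} The delicate point is justifying the integration-by-parts identity on $[0,s]$ applied to $g=P_{T-s}f$. This function is $\cC^1$, but its gradient is controlled only pointwise by $(P_{T-s}|f|^p)^{1/p}$, not uniformly. We would first prove the whole chain for $f\in\cC_c^\infty$, approximate $g$ by bounded $\cC^1$ functions as in Step 5 of Theorem~\ref{P4bis}, and then extend to bounded measurable $f$ by the same dominated-convergence and local-uniform argument. A further point to check is that the constant in Proposition~\ref{P5} is uniform in the starting point $x$; this holds because it depends only on $T$.
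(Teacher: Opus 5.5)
Your proposal is correct and follows essentially the paper's proof. The vertical part is Theorem~\ref{P4bis} with Proposition~\ref{P5} and H\"older. For the horizontal part, your integration by parts with the adapted control $h=-\frac{v_\R}{s}\bone_{[0,s]}$ gives exactly the paper's split into $v_\R\,\E\left[P_{T-s}f(X_s^x)\,B_s/s\right]$ plus a vertical derivative at time $T-s$ in the direction $(0,W_s)$; the paper derives the same identity from the martingale $N_t^\lambda$ rather than by Cameron--Martin, and then uses the same balancing $s\asymp\sqrt T$.

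The approximation step you flag as the main obstacle is lighter than you expect. For bounded $f$, those same two results, being uniform in the base point, already give $\sup_y\|d_yP_{T-s}f\|\le C\|f\|_\infty$. So $g$ is a bounded $\cC^1$ function with bounded derivative, and your integration by parts on $[0,s]$ applies to it directly.
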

In the next theorem we improve the horizontal gradient by using first a reflection coupling in $\R$ or in $\bbS$ according to the model, but we need a bound in terms of $\|f\|_\infty$.
\begin{theorem}\label{T3}
\begin{enumerate}
\item There exists a constant $C>0$ such that for all $T\geq 2\sqrt{2}$ and all { bounded measurable } functions $f$ on $\R\times\mathbb{C}$ and all $x\in \R \times \mathbb{C}$, 
\begin{equation}\label{Egbis}
|\partial_u  P_T f(x) | \leq C\frac{1+\ln T}{\sqrt T}  \Vert f \Vert_\infty.
\end{equation}
\item There exists a constant $C>0$ such that for all $T\geq {2\sqrt{2}}$ and all { bounded measurable } function $f$ on ${\mathbb S}\times\mathbb{C}$ and all $x\in {\mathbb S} \times \mathbb{C}$, 
\begin{equation}\label{Egter}
|\partial_u  P_T f(x) | \leq C\frac{1}{\sqrt T}  \Vert f \Vert_\infty.
\end{equation}
\end{enumerate}
\end{theorem}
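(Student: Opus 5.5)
The plan is to compare $P_Tf$ at $x=(u,z)$ and $x'=(u+\ve,z)$ using a \emph{reflection coupling} of the driving Brownian motions up to an intermediate time, for instance $T/2$. After that time the remaining discrepancy is purely vertical, so it can be handled by the $T^{-1/2}$ vertical bound of Theorem~\ref{T2bis} with $p=\infty$, applied to $P_{T-s}$ with $T-s\ge T/2$.

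\textbf{Setup.} Let $B$ drive $X^x$ and let $B'=-B$ drive $X^{x'}$ until
\[
\tau:=\inf\{s:\ u+B_s=u+\ve+B'_s\},
\]
with $B'=B$ afterwards. In the $\bbS$ model the meeting is taken modulo $2\pi$. The difference $D_s=\ve-2B_s$ is a Brownian motion (with diffusion coefficient $2$) started at $\ve$. The mean $W_s:=\tfrac12(U_s+U'_s)=u+\ve/2$ stays constant during reflection, so $e^{iU_s}=e^{iW}e^{-iD_s/2}$. After $\tau$ the $u$-coordinates agree, and the $z$-coordinates differ by
\[
\Delta Z:=\int_0^{\tau}\big(e^{iU_s}-e^{iU'_s}\big)ds .
\]
By the strong Markov property at $\tau\wedge T/2$,
\[
|P_Tf(x)-P_Tf(x')|\le 2\|f\|_\infty\,\PP(\tau>T/2)+\E\Big[\bone_{\{\tau\le T/2\}}\big|P_{T-\tau}f(X_\tau)-P_{T-\tau}f(X_\tau+(0,\Delta Z))\big|\Big].
\]
By Theorem~\ref{T2bis} (with $p=\infty$, $T-\tau\ge T/2$ large), the second term is at most $C\|f\|_\infty T^{-1/2}\,\E[|\Delta Z|\bone_{\{\tau\le T/2\}}]$. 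Dividing by $\ve$ and letting $\ve\to0$ then gives $\partial_uP_Tf$, so the goal is to show that $\PP(\tau>T/2)$ and $\E[|\Delta Z|\bone_{\{\tau\le T/2\}}]$ are $O(\ve/\sqrt T)$ and $O(\ve\,\cdot)$ respectively, as detailed below.

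\textbf{Circle case (part 2).} Here $D$ is a Brownian motion on $[0,2\pi]$ killed at $\{0,2\pi\}$. Hence $\E[\tau]\le C\ve$ and $\PP(\tau>T/2)\le Ce^{-cT}\ve$. The trivial bound $|\Delta Z|\le\tau$ then gives $\E|\Delta Z|\le C\ve$, hence $|\partial_uP_Tf|\le C\|f\|_\infty T^{-1/2}$ for $T\ge2\sqrt2$. Smaller $T$ in the range are absorbed using Proposition~\ref{P5}.

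\textbf{Real-line case (part 1).} This is the main obstacle. Now $\PP(\tau>T/2)\le C\ve/\sqrt T$, which is fine. However, the bound $|\Delta Z|\le\int_0^\tau\min(2,|D_s|)ds$ only yields $\E[|\Delta Z|;\tau\le T/2]\lesssim \ve\sqrt T$, which is too big.

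The remedy is to exploit oscillation. One option is a modified coupling in which the mirror is reset, so that the common component $W$ is itself a Brownian motion independent of $D$: for instance, let $W$ be driven by a second independent noise via a time-splitting, or reflect only on alternating unit time slices. Conditionally on $D$, $\Delta Z$ is then an oscillating integral $\int_0^\tau e^{iW_s}\phi(D_s)\,ds$ with $|\phi(D)|\le \min(2,|D|)$. By the decorrelation of $e^{iW}$ on $O(1)$ time scales (as in Lemma~\ref{L3.2} and Proposition~\ref{P3.1}),
\[
\E\big[|\Delta Z|^2\,\big|\,D\big]\le C\int_0^{\tau\wedge T/2}\min(1,D_s^2)\,ds\le C\,(\tau\wedge T/2).
\]
Therefore
\[
\E\big[|\Delta Z|\bone_{\{\tau\le T/2\}}\big]\le C\,\E\big[(\tau\wedge T/2)^{1/2}\big]\le C\int_0^{T/2}\frac{\PP(\tau>s)}{\sqrt s}\,ds\le C\ve\int_1^{T}\frac{ds}{s}+C\ve\le C\ve(1+\ln T),
\]
using $\PP(\tau>s)\le C\min(1,\ve/\sqrt s)$. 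Combining the two contributions gives \eqref{Egbis}.

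The delicate points are building a coupling in which this conditional decorrelation is legitimate while $u$-coalescence is still achieved, and checking that the Markov-time argument is compatible with the randomness of $\Delta Z$. If the clean independence cannot be arranged, I would instead apply Proposition~\ref{P3.1} to $\int e^{iU_s}(1-e^{-iD_s})ds$ directly, converting it into a stochastic integral whose quadratic variation is $\int\min(1,D^2)ds$. The Burkholder--Davis--Gundy inequality then gives the same square-root estimate.
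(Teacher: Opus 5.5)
Your architecture (reflection coupling up to $T/2$, the strong Markov property at $\tau$, then the $p=\infty$ vertical bound of Theorem~\ref{T2bis} applied to $P_{T-\tau}$) matches the paper's, and your circle case is fine.

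\textbf{The gap: the real-line case breaks at your last display.} Since $\PP(\tau>s)\asymp\min(1,\ve/\sqrt s)$, the range $\ve^2\le s\le 1$ alone already gives $\int_{\ve^2}^{1}\frac{\PP(\tau>s)}{\sqrt s}\,ds\asymp\ve\ln(1/\ve)$. In fact $\E[(\tau\wedge T/2)^{1/2}]$ is of order $\ve\,(1+\ln T+\ln(1/\ve))$, not $\ve(1+\ln T)$. After dividing by $\ve$ and letting $\ve\to0$ your bound is infinite, so it gives no estimate on $\partial_uP_Tf$.

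The loss comes from replacing $\min(1,D_s^2)$ by $1$. While $D$ is still of order $\ve$, the integrand $|e^{iU_s}-e^{iU'_s}|\le|D_s|$ is tiny, and paying $\sqrt{\,\cdot\,}$ of the elapsed time there is too expensive. Your fallback has the same defect. It\^o's formula turns $\int_0^\tau\sin(D_s/2)\,ds$ into $2\sin(\ve/2)+2\int_0^\tau\cos(D_s/2)\,d(D_s/2)$. The quadratic variation of that martingale has density $\cos^2(D_s/2)\approx1$ near $D=0$, not $\min(1,D_s^2)$, so BDG reproduces the $\ve\ln(1/\ve)$ loss.

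\textbf{The missing step is a two-stage split, which is how the paper argues.} It works with the plain reflection coupling. There $\Delta Z=-2ie^{iW}\int_0^\tau\sin(D_s/2)\,ds$ is already an oscillating integral of the single Brownian motion $D/2$, so no modified coupling or conditional decorrelation is needed.
\begin{enumerate}
\item First stop at the exit time $\sigma$ of $D$ from $(0,2\pi)$, the time you already used on the circle. Up to $\sigma$, use the crude bound $|\Delta Z|\le2\sigma$, with $\E[\sigma]\le C\ve$.
\item A remaining piece exists only on $\{D_\sigma=2\pi\}$, which has probability $\ve/2\pi$. By the strong Markov property this piece is $2\int_0^{\tau_0}\sin(\beta_s)\,ds$ for a Brownian motion $\beta$ started at $\pi$. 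It\^o's formula rewrites it as $4\int_0^{\tau_0}\cos(\beta_s)\,d\beta_s$, with vanishing boundary terms since $\sin\pi=\sin0=0$.
\item Your BDG and square-root computation is now legitimate, because the start is at distance $\pi$ from $0$: $\E[(\tau_0\wedge T)^{1/2}]\le C(1+\ln T)$. The paper gets the same bound through a dyadic decomposition of $\{\tau_0\le T/2\}$ and Cauchy--Schwarz.
\end{enumerate}
Altogether $\E[|\Delta Z|\bone_{\{\tau\le T/2\}}]\le C\ve(1+\ln T)$ with $C$ independent of $\ve$, which is exactly what the limit $\ve\to0$ requires.
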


\begin{proof} [Proof of Theorem \ref{T2bis}]

First consider the vertical gradient, i.e. the case $v_\R=0$.
Equation~\eqref{E47} { for $T\geq \sqrt{2}$} is a consequence of Equation~\eqref{E23} in Proposition~\ref{P3} and then Hölder formula: 
\begin{equation}\label{E48bis}
\begin{split}
   | (d_xP_Tf, v)|&=\left|\E\left[f(X_T^x)\delta   h\right]\right|\\
   &\le \left(P_T|f|^p(x)\right)^{1/p}\|\delta  h\|_q
   \end{split}
\end{equation}
where $q$ is the conjugate of $p$. The proof is an immediate consequence of formula
\begin{equation}
    \label{E48}
 \|\delta  h\|_q\le C_q\left(|v_\R|+\frac1{\sqrt T}|v_\CC|\right)  
\end{equation}
which has been established in Proposition~\ref{P5}.

\smallbreak

Let us now consider the horizontal gradient, i.e. the case $v_{\CC}=0$. We will use Bismut-Elworthy-Li formula together with the first part of the proof. It is enough to do the computation at $x=\binom{0}{0}$.
For simplicity, we will do the proof for $T$ large enough, say $T\geq t_0$ where $t_0$ will {be given below}.

{Similarly as before,} we can assume that $f$ is smooth with compact support. For $u\in \R$, consider the martingale 
\begin{equation}\label{E49}
M_t(u)=P_{T-t}f\left(X_t^{(u,0)}\right)\quad\hbox{satisfying}\quad dM_t(u)= \left(d_{X_t^{(u,0)}}P_{T-t}f,\binom{dB_t}{0}\right).
\end{equation}
Differentiating at $u=0$, using the facts that $\|df\|$ and $\sup_{s\in [0,T]}\|TX_s\|$ are bounded,  we get that $\displaystyle M_t':=M_t'(0)=\left.\frac{d}{du}\right|_{u=0}M_t(u)$ is a bounded martingale. Consequently, for all fixed $\lambda \in [0,T]$, 
\begin{equation}
    \label{E50}
    N_t^\lambda:= {\left(1-\frac{t}\lambda\right)M_t' }+\frac1\lambda \int_0^tM_s'ds, \quad t\in [0,\lambda] 
\end{equation}
is also a martingale.
In this proof, we simply denote $X_t^x$ by $X_t$ (recall $x=\binom{0}{0}$). So we can write 
\begin{align*}
&\left (d_xP_Tf,\binom{1}{0}\right)\\&=M_0'=N_0^\lambda=\E\left[N_\lambda^\lambda\right]=\frac1\lambda\E\left[\int_0^\lambda \left(d_{X_t}P_{T-t}f,TX_t\binom{1}{0}\right)dt\right]\\
&=\frac1\lambda\E\left[\int_0^\lambda \left(d_{X_t}P_{T-t}f,\binom{1}{{i}\int_0^te^{iB_s}ds}\right)dt\right]\\
&=\frac1\lambda\E\left[\int_0^\lambda \left(d_{X_t}P_{T-t}f,\binom{1}{0}\right)dt\right]+\frac1\lambda\E\left[\int_0^\lambda \left(d_{X_t}P_{T-t}f,\binom{0}{{i}\int_0^te^{iB_s}ds}\right)dt\right]\\
&=:I+I\!I.
\end{align*}
Using~\eqref{E49} we see that $I$ is the expectation of the covariation of the two martingales $(P_{T-t}f(X_t))_{0\le t\le \lambda}$ and $\left(\frac1\lambda B_t\right)_{0\le t\le \lambda}$ so that we get 
$\displaystyle 
    I=\E\left[\frac1\lambda B_\lambda P_{T-\lambda}f(X_\lambda)\right]
$
and then
\begin{equation}
    \label{E51}
    |I|\le \left\|\frac{B_\lambda}{\lambda}\right\|_q\E\left[ |P_{T-\lambda}f|^p(X_\lambda)\right]^{1/p}\le \frac1{\sqrt \lambda}\left\|B_1\right\|_q\left( P_{T}(|f|^p)(x)\right)^{1/p}.
\end{equation}
For $I\!I$, we will use the first part. {As when studying the martingales $M_t$ and $M_t'$,} since for all $z\in\CC$ $P_{T-t}f\left(X_t^{(0,z)}\right)$ is a martingale, we can differentiate in $z$ and obtain that for all $v_\CC\in \CC$, 
$\left(d_{{X_t}}P_{T-t}f,TX_t\binom{0}{v_\CC}\right)$ is a (bounded) martingale. But {since $TX_t\binom{0}{v_\CC}=\binom{0}{v_\CC}$,}  $\left(d_{{X_t}}P_{T-t}f,\binom{0}{v_\CC}\right)$ is the same (bounded) martingale. Consequently
\begin{align*}
    I\!I&={\E\left[\frac1\lambda\int_0^\lambda\E\left[\left(d_{X_\lambda}P_{T-\lambda}f,\binom{0}{1}\right)|\mathcal{F}_{t}\right]{i}\left(\int_0^te^{iB_s}ds\right)dt\right]}\\
    &=\E\left[\left(d_{X_\lambda}P_{T-\lambda}f,\binom{0}{\frac1\lambda\int_0^\lambda{i}\left(\int_0^te^{iB_s}ds\right)dt}\right)\right]\\
    &=\E\left[\left(d_{X_\lambda}P_{T-\lambda}f,\binom{0}{\int_0^\lambda{i}\left(1-\frac{s}\lambda\right)e^{iB_s}ds}\right)\right].
\end{align*}
 We additionally suppose that {$T\geq 2\sqrt2 $ and $T\geq 2\lambda$ so that $T-\lambda \ge \sqrt2$}. Now using~\eqref{E48bis} and~\eqref{E48} we get
\begin{equation}
    \label{E55}
  |I\!I|\le \frac{C_p}{\sqrt{T-\lambda}}\E\left[|P_{T-\lambda}f|^p(X_\lambda)\right]^{1/p}\left\|\int_0^\lambda\left(1-\frac{s}\lambda\right)e^{iB_s}ds\right\|_q\le \frac{C_p' \sqrt{2\lambda}}{\sqrt{T}}(P_T(|f|^p)(x))^{1/p}.
\end{equation}
with $\displaystyle C_p'=C_p\sup_{\lambda\ge 1}\frac1{\sqrt \lambda}\left\|\int_0^\lambda\left(1-\frac{s}\lambda\right)e^{iB_s}ds\right\|_q$.
Finally taking $\displaystyle \lambda=\frac{\|B_1\|_q\sqrt{T}}{C_p'\sqrt{2}}$,  minimising the right hand sides of~\eqref{E55} and~\eqref{E51},
we get for {$\displaystyle  T\geq t_0:=\frac{2\|B_1\|^2_q}{C_p'^2}{\wedge 2\sqrt{2}}$} that 
\begin{equation}
    \label{E56}
    \left(d_xP_Tf,\binom{1}{0}\right)\le 2^{\frac{5}{4}}\frac{\sqrt{C_p'\|B_1\|_q}}{T^{1/4}} (P_T(|f|^p)(x))^{1/p}.
\end{equation}

\end{proof}

\begin{remark}\label{NB:lnpt} {Taking conditional expectation, one can replace the dual by its conditional expectation and one  also obtains the derivative formula 
}
\begin{equation}\label{E48ter}
   (d_xP_Tf, v)={-}  \E\left[f(X_T^x)  \E\left[\delta   h (x,T,v) \big| X_T^x\right] \right]
\end{equation}
and it is well known that it corresponds to the gradient of the logarithm of the heat kernel of the semigroup; that is: 
\[
{-}\E\left[\delta   h (x,T,v) \big| X_T^x\right] =  (d_x  \ln p_T( \cdot,X_{{T}}^x), v).
\]
As it is clear by the estimates of Proposition \ref{P5} and the above proof, we believe that such a procedure should improve and provide the optimal rate. Unfortunately, it is in general out of reach to compute directly the conditional expectation of the dual or the derivative of   the heat kernel.  
\end{remark}

\begin{remark}\label{NB2}
Similarly in the above proof, we also have $\displaystyle 
    I=\E\left[\E\left[\frac1\lambda B_\lambda|X_T\right] f(X_T)\right]
$.
The random vector $\displaystyle \left(\frac1\lambda B_\lambda, \frac{1}{\sqrt T}X_T\right)$ is very close in law to a Gaussian vector. If it was  Gaussian with the same expectation and covariance, then it is easy to check that taking $\lambda=1$, we would obtain a bound $\displaystyle \left|\E\left[ B_1|X_T\right]\right|\le \frac{C \|X_T\|}{T}$ which would allow us to obtain an order $T^{-1/2}$ for the horizontal gradient in~\eqref{E47}.
\end{remark}

We now turn to the proof of Theorem \ref{T3}.
\begin{proof}[Proof of Theorem \ref{T3}]
By rotation,  it is enough to consider the horizontal derivative in  $u=0$. We will thus consider the starting points $(u,z)$ and  $(-u,z)$ with $u>0$ small.
The idea is to first perform a reflection coupling of the two driving Brownian motions until they meet. More precisely we construct $X_t^{(u,z)}=(U_t,Z_t)$ and $X_t^{(-u,z)}=(\tilde U_t, \tilde Z_t)$ with 
\[
\left\{ \begin{array}{ccl}
U_t &=& u  + B_t\\
\tilde U_t&=&  -u -B_t = -U_t. 
\end{array}
\right.
\]
We denote by $\tau$ the first  time when $U_t$ and $\tilde U_t$ meet. 
It corresponds to the hitting times of the standard Brownian motion:
\[
\left\{ \begin{array}{ccccl}
\tau&=& \tau_{0}&=& \inf\{t\geq 0, B_t^u= 0 \}  \textrm{ on } \R,\\
\tau&=&\tau_{0,\pi}&=& \inf\{t\geq 0, B_t^u= 0 \textrm{ or } \pi \}  \textrm{ on } \bbS.
\end{array}
\right.
\]
Here $(B_t^u)_{t\geq 0}$ denotes the standard Brownian motion starting in $u$.
Separating the cases where they meet before the time $\frac{T}{2}$, one has
\begin{align*}
    & \left| P_T f(u,z)-P_Tf(-u,z)\right|\\
 =& \left|\E \left[  \left(f\big(X_T^{(u,z)}\big)-f\big(X_T^{(-u,z)}\big)\right) 
 \left( \bone_{\{\tau>\frac{T}{2} \}}+ \bone_{\{\tau\leq \frac{T}{2} \}}\right) \right]\right|  \\
 \leq &\underbrace{ \left|\E \left[  \left(f\big(X_T^{(u,z)}\big)-f\big(X_T^{(-u,z)}\big)\right) 
 \bone_{\{\tau>\frac{T}{2} \}}\right]\right| }_{ =:A}
 + \underbrace{ \left|\E \left[  \left(f\big(X_T^{(u,z)}\big)-f\big(X_T^{(-u,z)}\big)\right)  \bone_{\{\tau\leq \frac{T}{2} \}}\right]\right|}_{=:B}.
\end{align*}
We first consider the case where the coupling does not occur. A  standard estimate {(see~\cite{Lindvall-Rogers})} gives that on $\R$, and thus also on $\bbS$, 
\begin{equation}\label{A}
A\leq 2\Vert f \Vert_\infty \P\left(\tau>\frac T 2 \right) \leq 2\Vert f \Vert_\infty  \frac{u }{\sqrt{2\pi} \sqrt{T/2}}.
 \end{equation}

We now turn to the term where the coupling occurs. 
First we note that 
\[
\left\{ \begin{array}{l }
\tilde U_\tau =U_\tau \in \{0,\pi\}\\
Z_\tau =z +  \int_0 ^\tau e^{i (u+ B_s)} ds\\
\tilde Z_\tau =z +  \int_0 ^\tau e^{-i(u+B_s)} ds\\
\end{array}.
\right.
\]

By using the strong Markov property  and with $\F_{\tau}$ the $\sigma$-field of events up to $\tau$,
one has
\begin{align*}
    B&=
    { \left|\E\Big[ \left( P_{T-\tau} f(U_\tau,Z_\tau) - P_{T-\tau} f(U_\tau,\tilde Z_\tau) \right) \bone_{\{\tau\leq \frac{T}{2}\}} \Big]\right|}\\
    &\leq \E\Big[ \left| P_{T-\tau} f(U_\tau,Z_\tau) - P_{T-\tau} f(U_\tau,\tilde Z_\tau) \right| \bone_{\{\tau\leq \frac{T}{2}\}} \Big].
\end{align*}
Now using {the vertical estimate from Theorem \ref{T2bis}}, one infers:
\begin{align*}
    B&
    \leq \E\left[ \frac{C \Vert f\Vert_\infty}{\sqrt{T-\tau}} \left|Z_\tau - \tilde  Z_\tau \right| \bone_{\{\tau\leq \frac{T}{2}\}} \right]\\
    &\leq  \frac{ \sqrt 2 C \Vert f\Vert_\infty}{\sqrt T } \,
    \E\left[\left|   Z_\tau - \tilde  Z_\tau \right| \bone_{\{\tau\leq \frac{T}{2}\}} \right].
\end{align*}
Note that 
\[
Z_\tau - \tilde  Z_\tau= 2 i \int_0^\tau \sin(B_s^u) ds.
\]
In the case of the circle $\bbS$, one has:
\[
\left|Z_\tau - \tilde  Z_\tau \right|\leq 2 \tau_{0,\pi}
\]
and thus
\[
\E\left[\left|   Z_\tau - \tilde  Z_\tau \right| \bone_{\{\tau\leq \frac{T}{2}\}} \right]
\leq 2 \E[\tau_{0,\pi}]= 2 u (\pi-u) \leq 2\pi u
\]
and the result follows in this situation by dividing by $u$ and letting $u$ go to 0.

In the case of $\R$
\begin{align*}
   & \E\left[ \Big| \int_0^{\tau_0} \sin(B_s^u) ds\Big| \bone_{\{\tau_0\leq \frac{T}{2}\}} \right]\\
  & = \E\left[ \Big| \int_0^{\tau_{0,\pi}} \sin(B_s^u) ds + \int_{\tau_{0,\pi}}^{\tau_0} \sin(B_s^u) ds\Big| \bone_{\{\tau_0\leq \frac{T}{2}\}}  
   \left( \bone_{\{\tau_{0,\pi}=\tau_0\}} + \bone_{\{\tau_{0,\pi}=\tau_\pi\}} \right)\right]\\
  & \leq \E\left[ \tau_{0,\pi} \right] + 
   \E\left[ \Big| \int_{\tau_{0,\pi}}^{\tau_0} \sin(B_s^u) ds\Big| \bone_{\{\tau_0\leq \frac{T}{2}\}}
   \bone_{\{\tau_{0,\pi}=\tau_\pi\}}\right]\\
  &\leq  \E\left[ \tau_{0,\pi} \right] + 
   \E\left[ \Big| \int_{0}^{\tau_0} \sin(B_s^\pi) ds\Big| \bone_{\{\tau_0\leq \frac{T}{2}\}} \right]
   \P\left( \tau_{0,\pi}=\tau_\pi \right).
\end{align*}
As before, one has
\[
\E[\tau_{0,\pi}]= u (\pi-u) \textrm{ and } 
 \P\left( \tau_{0,\pi}=\tau_\pi \right) = \frac{u}{\pi}.
\]
By Itô formula, one has
\[
\int_{0}^{\tau_0} \sin(B_s^\pi) ds = 2 \int_{0}^{\tau_0} \cos(B_s^\pi) dB_s^\pi.
\]

Now, we consider $T/2$ to be in the interval $[2^{k-1},2^k]$ for some $k\geq 1$. We have
\begin{align*}
    &\E\left[ \Big| \int_{0}^{\tau_0} \cos(\pi+ B_s) dB_s\Big| \bone_{\{\tau_0\leq \frac{T}{2}\}} \right]\\
   & \leq 1 + \sum_{j=0}^{k-1} 
    \E\left[ \Big| \int_{0}^{\tau_0} \cos(\pi+ B_s) dB_s\Big| 
    \bone_{\{ 2^j\leq\tau_0\leq 2^{j+1}\}}   \right].
\end{align*}
We now claim that each term of the sum is bounded independently of $j$. By Cauchy-Schwarz and the Itô isometry:
\begin{align*}
    & \E\left[ \Big| \int_{0}^{\tau_0} \cos(\pi+ B_s) dB_s\Big| 
    \bone_{\{ 2^j\leq\tau_0<2^{j+1}\}}   \right] \\
   & \leq  \E\left[ \left( \int_{0}^{\tau_0 \wedge {2^{j+1}}} \cos(\pi+ B_s) dB_s\right)^2   
     \right]^{1/2}
     \P\left(2^j\leq\tau_0<2^{j+1}\right)^{1/2}\\
  & \leq   \E\left[\tau_0 \wedge{2^{j+1}} \right]^{1/2}
     \P\left(2^j\leq\tau_0<2^{j+1}\right)^{1/2}.
\end{align*}
Finally, with $a=2^{j+1}$
\begin{align*}
    \E[\tau_0 \wedge a] 
    &=  \E[\tau_0 \; \bone_{\{\tau_0\leq a\}}] + a \P(\tau_0>a)\\
    &\leq  \int_0^a s \frac{\pi e^{-\frac{\pi^2}{2s}}} {\sqrt {2\pi} s^{3/2}}ds  + a \int_a^\infty  \frac{\pi e^{-\frac{\pi^2}{2s}}} {\sqrt {2\pi} s^{3/2}}ds\\
     &\leq   \sqrt {\frac\pi 2 }\int_0^a \frac{1} {\sqrt {s} }ds  + a  \sqrt {\frac\pi 2 } \int_a^\infty  \frac{1} {s^{3/2}}ds\\
      &\le   C \sqrt a
\end{align*}
and 
\[
\P\left(\frac{a}{2} \leq \tau_0\leq a \right)
\leq   \sqrt {\frac\pi 2 } \int_{\frac {a}{2}}^a   \frac{1} {s^{3/2}}ds \leq \frac{C}{\sqrt a}.
\]
The above claim is thus proven and therefore, for $T/2\in[2^{k-1}, 2^k$], one has
\[
\E\left[ \Big| \int_{0}^{\tau_0} \cos(\pi+ B_s) dB_s\Big| \bone_{\{\tau_0\leq \frac{T}{2}\}} \right]
\leq 1 + C k  \leq 1 +C \frac{\ln T}{\ln 2}.
\]
Finally, putting all the estimates together,
dividing by $u$ and letting $u$ go to 0 gives the result in the case of $\R\times \R^2$.

\end{proof}

We now turn to the Liouville type result for harmonic functions with respect to the  operator $L$ given in \eqref{EL}. 
\begin{theorem}\label{T4}
Any bounded $L$-harmonic function on ${\mathbb S} \times\R^2$ or on  $\R\times \R^2$ is constant.
\end{theorem}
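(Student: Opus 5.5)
The plan is to combine the invariance $P_T g = g$ of bounded harmonic functions with the gradient bound of Theorem~\ref{T3}, which gives a full gradient of order $T^{-1/2}$, up to a logarithm, in terms of $\|g\|_\infty$.

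Let $g$ be a bounded $L$-harmonic function on $\bbS\times\R^2$ or on $\R\times\R^2$. Since $L$ is hypoelliptic (Hörmander's condition holds, see Section~\ref{S1}), $g$ is smooth. We first show that $P_T g = g$ for every $T\geq 0$. Itô's formula gives that $g(X_t^x)$ is a local martingale. It is bounded, hence a true martingale, so $\E[g(X_T^x)] = g(x)$. No localisation issue arises, because $g$ is bounded and the process does not explode: $U_t = u+B_t$ and $|Z_t - z|\le t$.

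Next we apply the gradient estimates at time $T$ with $f=g$, which is bounded and measurable. For the vertical directions $v=(0,v_\CC)$, Theorem~\ref{T2bis} with $p=\infty$ gives
\[
|(d_x g, v)| = |(d_x P_T g, v)| \le C\|g\|_\infty \frac{|v_\CC|}{\sqrt T}.
\]
For the horizontal direction, Theorem~\ref{T3} gives $|\partial_u g(x)| = |\partial_u P_T g(x)| \le C\|g\|_\infty (1+\ln T)/\sqrt T$ on $\R\times\R^2$, and the bound $C\|g\|_\infty/\sqrt T$ on $\bbS\times\R^2$. Alternatively, Theorem~\ref{T2bis} alone already gives the horizontal rate $T^{-1/4}$, which is also enough. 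Both bounds hold for all $T$ large enough and uniformly in $x$. Letting $T\to\infty$ yields $d_x g = 0$ for every $x$, so $g$ is constant because the state space is connected.

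The only delicate point is making sure the estimates apply to $g$ as stated. Theorem~\ref{T2bis} is formulated for $f\ge 0$, but with $p=\infty$ one can apply it to $g+\|g\|_\infty\ge 0$, whose gradient is the same. Likewise, Theorem~\ref{T3} and Theorem~\ref{T2bis} are proved through \eqref{IPP}, which holds for all bounded measurable $f$. Beyond this, the argument should be routine; the main work is already contained in the gradient bounds.
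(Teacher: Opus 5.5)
Your proposal is correct and follows the paper's proof: use $P_Tg=g$ and let $T\to\infty$ in the gradient bounds. As in the paper's one-line estimate, the vertical part comes from Theorem~\ref{T2bis} with $p=\infty$ and the horizontal part from Theorem~\ref{T3}. Your added justifications are all valid: smoothness via hypoellipticity, the bounded-local-martingale argument for $P_Tg=g$, and the shift to $g+\|g\|_\infty\ge 0$. So is your remark that the $T^{-1/4}$ horizontal rate from Theorem~\ref{T2bis} alone would already suffice.
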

\begin{proof} We do only the proof for $\R\times \R^2$. The other case is similar.
Let $f$ be a  bounded function on $\R\times \CC$ such that $Lf=0$. We thus have $P_tf=f$. Then, from Theorem \ref{T3}, $f$ is constant since:
\begin{align*}
    |(d_xf,v)|=|(d_xP_Tf,v)|\leq C\|f\|_{\infty}\left(\frac{1+\ln(T)}{\sqrt{T}}|v_{\mathbb{R}}|+\frac{1}{\sqrt{T}}|v_{\mathbb C}|\right)\xrightarrow[T\to\infty]{}0.
\end{align*}
\end{proof}

\bigbreak

\noindent{\bf Acknowledgments}
We want to sincerely thank the editor and the three
reviewers for their handling of the review, and their detailed and thoughtful comments
and suggestions which have significantly improved the article.

\bibliographystyle{plain} 
\bibliography{Bibliographie}
\end{document}